\newcommand{\Href}[2]{\hyperref[#2]{#1~\ref{#2}}}
\numberwithin{equation}{section}
\newtheorem{thm}{Theorem}[section]
\newtheorem{prp}{Proposition}[section]
\newtheorem{lemma}{Lemma}[section]
\newtheorem{cor}{Corollary}[section]
\newtheorem{claim}{Claim}[section]
\theoremstyle{definition}
\newcommand{\Red}{\Re^d}
\newcommand{\ball}[1]{\mathbf{B}^{#1}}
\newcommand{\norm}[1]{\left\|#1\right\|}
\newcommand{\conv}[1]{\mathrm{conv}\left(#1\right)}
\newcommand{\pos}{\mathrm{pos}}
\newcommand{\diag}{\mathrm{diag}}
\newcommand{\bd}[1]{\mathrm{bd}\left(#1\right)}
\newcommand{\inter}[1]{\mathrm{int}\left(#1\right)}
\newcommand{\iprod}[2]{\left\langle#1,#2\right\rangle}
\newcommand{\tr}[1]{\mathrm{trace}\left(#1\right)}
\newcommand{\length}[1]{\mathrm{length}\left(#1\right)}
\newcommand{\enorm}[1]{\left|#1\right|}
\def\R{{\mathbb R}}
\renewcommand{\Re}{\mathbb{R}}
\def\ellips{\mathcal{E}}%
\newcommand{\Redp}{\Re^{d+1}}
\newcommand{\st}{:\;}
\newcommand{\MM}{\mathcal{M}}
\def\phi{\varphi}
\def\epsilon{\varepsilon}
\def\alpha{\upalpha}
\newcommand{\loginfconv}[2]{#1 \star #2}%
\newcommand{\vol}[1]{\operatorname{vol}\nolimits_{#1}}%
\newcommand{\betaf}[2]{\operatorname{B} \!\left(#1, #2 \right)}
\newcommand{\gammaf}[1]{\operatorname{\Gamma} \!\left(#1 \right)}
\newcommand{\di}{\,\mathrm{d}}
\newcommand{\dsymm}{$d$-symmetric}
\newcommand{\bernardoell}{the AMJV ellipsoid
}%
\newcommand{\Bernardoell}{The AMJV ellipsoid
}%
\newcommand{\upthing}[1]{\overline{#1}}%
\newcommand{\sthing}[2][s]{\prescript{(#1)}{}{#2}}%
\newcommand{\slift}[2][s]{\sthing[#1]{\upthing{#2}}}%
\newcommand{\smeasure}[2][s]{\sthing[#1]{\mu}\!\left(#2\right)}%
\newcommand{\smarg}[2][s]{\sthing[#1]{\mathrm{marginal}\left({#2}\right)}}%
\newcommand{\volbs}[1][s]{\sthing[#1]{\kappa}_{d+1}}%
\newcommand{\sellbody}[2][s]{\upthing{E}(#2,#1)}%
\newcommand{\selldense}[2][s]{\sthing[#1]{J}_{#2}}%
\newcommand{\tpdfs}{\texorpdfstring{$s$}{s}}%
\newcommand{\jsellipsoid}{John $s$-ellipsoid}
\newcommand{\jsellipsoids}{John $s$-ellipsoids}
\newcommand{\jsfunction}{John $s$-function}
\newcommand{\jsfunctions}{John $s$-functions}
\newcommand{\noshow}[1]{}
\newcommand{\irat}{\operatorname{I.rat}}%
\newcommand{\sintrat}[2][s]{{
\sthing[#1]{\irat}}{#2}}
\def\heightfunc{\hslash}
\newcommand{\shf}[1]
{\heightfunc_{#1}}%
\date{\today}
\title{Functional John Ellipsoids}
\author{Grigory Ivanov\address{Grigory Ivanov: Institute of Science and 
Technology Austria; Moscow Inst. of Physics and Technology, 
Moscow, Russia}
\email{grimivanov@gmail.com}
\and
M\'arton Nasz\'odi\address{M\'arton Nasz\'odi: Alfr\'ed R\'enyi Inst. of Math.; 
MTA-ELTE Lend\"ulet Combinatorial Geometry Research Group;
Dept. of Geometry, Lor\'and E\"otv\"os University, Budapest}
\email{marton.naszodi@math.elte.hu}
}
\subjclass[2020]{Primary 52A23; Secondary 52A40, 46T12}
\keywords{John ellipsoid, logarithmically concave function, Helly type theorem}
\begin{document}
\begin{abstract}
We introduce a new way of representing logarithmically concave functions on 
$\mathbb{R}^{d}$. It allows us to extend the notion of the largest volume 
ellipsoid contained in a convex body to the setting of logarithmically concave 
functions as follows. For every $s>0$, we define a class of non-negative 
functions on $\mathbb{R}^{d}$ derived from ellipsoids in $\mathbb{R}^{d+1}$. 
For 
any log-concave function $f$ on $\mathbb{R}^{d}$, and any fixed $s>0$, we 
consider functions belonging to this class, and find the one with the largest 
integral under the condition that it is pointwise less than or equal to $f$, 
and 
we call it the \emph{\jsfunction} of $f$. After establishing existence and 
uniqueness, we give a characterization of this function similar to the one 
given 
by John in his fundamental theorem. We find that \jsfunctions{} converge to 
characteristic functions of ellipsoids as $s$ tends to zero and to Gaussian 
densities as $s$ tends to infinity.

As an application, we prove a quantitative Helly type result: the integral of 
the pointwise minimum of any family of log-concave functions is at least a 
constant $c_d$ multiple of the integral of the pointwise minimum of a properly 
chosen subfamily of size $3d+2$, where $c_d$ depends only on $d$.
\end{abstract}

\maketitle


\section{Main results and the structure of the paper}\label{sec:intro}


The largest volume ellipsoid contained in a convex body 
in $\mathbb{R}^{d}$ and, in particular, John's result \cite{Jo48} 
characterizing 
it, plays a fundamental role in convexity. The latter states that the 
origin-centered Euclidean unit ball is the largest volume ellipsoid contained 
in 
the convex body $K$ if and only if it is contained in $K$ and the contact 
points (that is, the intersection points of the unit sphere and the boundary of 
$K$) satisfy a certain algebraic condition.

Alonso-Guti{\'e}rrez, Gonzales Merino, Jim{\'e}nez and Villa 
\cite{alonso2018john} 
extended the notion of the John ellipsoid to the setting of logarithmically 
concave functions. To any 
log-concave function $f$ of finite positive integral on $\Red$, they associate 
an ellipsoid in $\Red$, which we call 
\emph{\bernardoell{}}, in the following manner.

We denote the $L_{\infty}$ norm of $f$ by  $\norm{f}$. For every $\norm{f} > 
\beta > 0$, consider the superlevel set $\{x\in\Red\st f(x)\geq\beta\}$ of $f$. 
This is a bounded convex set with non-empty interior, we take its largest 
volume 
ellipsoid, and multiply the volume of this ellipsoid by $\beta$. As shown in  
\cite{alonso2018john}, there is a unique height $\beta_0 \in \left[0, 
\norm{f}\right]$ such that this product is maximal. \Bernardoell{} is the 
ellipsoid $E$ in $\Red$ obtained for this $\beta_0$. 

We propose an alternative route to this extension with the introduction of a 
parameter $s>0$ that can be chosen arbitrarily. As a limit as $s$ tends to 
zero, 
we recover the above described approach of Alonso-Guti{\'e}rrez, Gonzales 
Merino, 
Jim{\'e}nez and Villa. The main advantage of our framework is that it implies a 
John type characterization of the maximal ellipsoid. We present an application 
of this characterization: a quantitative Helly type result for the integral of 
the pointwise minimum of a family of logarithmically concave functions. 

The paper is organized as follows.

In \Href{Section}{sec:smeasure}, we introduce the notions of $s$-lifting and 
$s$-volume, which will frame our study of logarithmically concave functions, 
and 
then, we define our main object of interest, the \emph{\jsellipsoid} (an 
ellipsoid in $\Redp$) and the \emph{\jsfunction} (a function on $\Red$) of a 
log-concave function $f$ on $\Red$.

The idea is the following. Fix an $s>0$ and consider the graph of the function 
$f^{1/s}$, which is a set in $\Redp$, and turn it into a not necessarily convex 
body in $\Redp$, which we call the \emph{$s$-lifting} of $f$. We define also a 
measure-like quantity, the \emph{$s$-volume} of sets in $\Redp$. Then we look 
for the ellipsoid in $\Redp$ which is contained in the $s$-lifting of $f$ and 
is 
of maximal $s$-volume. We call this ellipsoid in $\Redp$ the \jsellipsoid{} of 
$f$. This ellipsoid defines a function on $\Red$, which is the \jsfunction{} of 
$f$. This function is pointwise 
less than or equal to $f$.

In \Href{Subsection}{sec:motivationagain}, we describe our definitions in 
geometric terms and in \Href{Subsection}{sec:height}, in terms of a functional 
optimization problem, concluding the second introductory section.

In \Href{Section}{sec:basicbounds}, we prove some 
basic inequalities about the quantities introduced before. As an immediate 
application of these inequalities, we 
obtain a compactness result that, in the next section, yields that the 
\jsellipsoid{} exists.

\Href{Section}{sec:interpolation} contains one of our main tools, 
\emph{interpolation between ellipsoids}.
In the classical theory of the John ellipsoid, the uniqueness of the largest 
volume ellipsoid contained in a convex body $K$ in $\Red$ may be proved in the 
following way. Assume that $E_1=A_1\ball{d}+a_1$ and $E_2=A_2\ball{d}+a_2$ are 
ellipsoids of the same volume contained in $K$, where $\ball{d}$ denotes the 
Euclidean unit ball, $A_1, A_2$ are matrices, and $a_1, a_2\in\Red$. Then the 
ellipsoid $\frac{A_1+A_2}{2}\ball{d}+\frac{a_1+a_2}{2}$ is also contained in 
$K$ and its volume is larger than that of $E_1$ and $E_2$. 

One cannot apply this argument in our setting in a straightforward 
manner, as the set we consider is not convex. However,  we show that if 
two ellipsoids in $\Redp$ of the same $s$-volume are contained in the 
$s$-lifting of a log-concave function $f$, then one can define a third 
ellipsoid ``between'' the two ellipsoids which is of larger $s$-volume. This 
intermediate ellipsoid is obtained as a non-linear combination of the 
parameters determining the two ellipsoids.

As an immediate application, we obtain that the \jsellipsoid{} is 
unique, see \Href{Theorem}{thm:johnunicity}.

In \Href{Section}{sec:johncond}, we state and prove 
a necessary and sufficient condition for the $(d+1)$-dimensional Euclidean 
unit ball $\ball{d+1}$ to be the \jsellipsoid{} of a log-concave function 
$f$ on $\Red$, see \Href{Theorem}{thm:johncond}. Here, we phrase a simplified
version of it.

\begin{thm}\label{thm:johncondbasic}
Let $\upthing{K}=\{(x,\xi)\in\Redp\st |\xi|\leq f(x)/2 \}\subseteq \Redp$ 
denote the symmetrized subgraph of an upper semi-continuous log-concave 
function 
$f$ on $\Red$ of positive non-zero integral. Assume that the 
$(d+1)$-dimensional Euclidean unit ball $\ball{d+1}$ is contained in 
$\upthing{K}$. Then the following are equivalent.
\begin{enumerate}
 \item
The ball $\ball{d+1}$ is the unique maximum volume ellipsoid contained in 
$\upthing{K}$. 
 \item
There are contact points 
$\upthing{u}_1,\ldots,\upthing{u}_k\in\bd{\ball{d+1}}\cap\bd{\upthing{K}}$, and 
positive weights $c_1,\ldots,c_k$ such that 
\begin{equation*}
 \sum_{i=1}^k c_i \upthing{u}_i\otimes \upthing{u}_i =\upthing{I}
 \;\;\;\mbox{ and }\;\;\;\;
 \sum_{i=1}^k c_i u_i =0,
\end{equation*}
where $u_i$ is the orthogonal projection of $\upthing{u}_i$ onto $\Red$ and 
$\upthing{I}$ is the $(d+1)\times(d+1)$ identity matrix.
\end{enumerate}
\end{thm}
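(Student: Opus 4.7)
The plan is to prove the two implications separately. Direction (1)$\Rightarrow$(2) is a variational/Lagrangian argument in the spirit of the classical John theorem, while (2)$\Rightarrow$(1) requires the interpolation theorem of \Href{Section}{sec:interpolation} to compensate for the non-convexity of $\upthing{K}$. Throughout, I parametrize ellipsoids as $\upthing{E}=A\ball{d+1}+\upthing{a}$ with $A$ a symmetric positive definite $(d{+}1)\times(d{+}1)$ matrix and $\upthing{a}\in\Redp$, so that $\vol{d+1}\upthing{E}=\kappa_{d+1}\det A$.

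For (1)$\Rightarrow$(2), I first argue that at each contact point $\upthing{u}_i\in\bd{\ball{d+1}}\cap\bd{\upthing{K}}$ the log-concave graph bounding $\upthing{K}$ is sandwiched between $\bd{\ball{d+1}}$ (from inside) and its tangent hyperplane $\{\iprod{\upthing{y}}{\upthing{u}_i}=1\}$ (from outside, locally), which forces $f$ to be differentiable at the projection $u_i$ and the outward unit normal to $\upthing{K}$ at $\upthing{u}_i$ to equal $\upthing{u}_i$ itself. Linearizing the containment condition $\upthing{E}\subseteq\upthing{K}$ around $(A,\upthing{a})=(I,0)$, a perturbation $(T,\upthing{a})$ is admissible to first order iff $\iprod{T\upthing{u}_i}{\upthing{u}_i}+\iprod{\upthing{a}}{\upthing{u}_i}\leq 0$ at every contact point. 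The reflection symmetry $(x,\xi)\mapsto(x,-\xi)$ of $\upthing{K}$ then lets me pair each $\upthing{u}_i=(u_i,\xi_i)$ with $(u_i,-\xi_i)$ and restrict the centre perturbation to $\upthing{a}\in\Red\oplus\{0\}$ without changing the first-order objective $\tr{T}$. Local optimality on the admissible cone together with Farkas' lemma produces positive weights $c_i$ satisfying $\sum c_i\upthing{u}_i\otimes\upthing{u}_i=\upthing{I}$ and $\sum c_i u_i=0$; the $\xi$-component of the centring equation vanishes automatically because of the reflection pairing, which is precisely why the theorem requires $\sum c_i u_i=0$ rather than $\sum c_i\upthing{u}_i=0$.

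For (2)$\Rightarrow$(1), the main obstacle is that $\upthing{K}$ is non-convex: the tangent hyperplane at $\upthing{u}_i$ is only a local support, so I cannot deduce a global John-type inequality of the form $\iprod{A\upthing{u}_i}{\upthing{u}_i}+\iprod{\upthing{a}}{\upthing{u}_i}\leq 1$ for an arbitrary competitor $\upthing{E}=A\ball{d+1}+\upthing{a}\subseteq\upthing{K}$ from containment alone. Here I would invoke the interpolation theorem of \Href{Section}{sec:interpolation}: given any such $\upthing{E}$, interpolate between $\ball{d+1}$ and $\upthing{E}$ to produce a path of ellipsoids staying inside $\upthing{K}$ (this is where log-concavity of $f$ is used in an essential way), and analyse the first variation of $\log\det A$ along this path. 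Summing the resulting local inequalities at contact points with weights $c_i$, using the identity $\sum c_i\upthing{u}_i\otimes\upthing{u}_i=\upthing{I}$ together with the reflection pairing to eliminate the $\upthing{a}$-dependence, yields $\tr{A}\leq d+1$; AM--GM then gives $\det A\leq 1$, with equality forcing $A=I$, and $\sum c_i u_i=0$ forces $\upthing{a}=0$. I expect the bridge from purely local first-order data at contact points to this global volume bound in the non-convex setting to be the main technical burden, and the essential use of log-concavity via the interpolation theorem is what distinguishes the present setting from the classical convex John theorem.
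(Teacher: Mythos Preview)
Your plan is essentially the paper's: (1)$\Rightarrow$(2) by a separation/Farkas argument on the linearized admissible cone (the paper's Claims~\ref{claim:Hseparates}--\ref{claim:HimpliesNotLocMax}), and (2)$\Rightarrow$(1) by feeding a competitor through the interpolation of \Href{Section}{sec:interpolation} and reading off first-order information at the contact points (the paper's Claims~\ref{claim:HSbigeq}--\ref{claim:supportplane}). The paper phrases the second implication contrapositively --- a strictly larger ellipsoid produces a functional that weakly separates $(\upthing{I},0)$ from the lifted contact set --- whereas you sum the contact-point inequalities directly; these are dual formulations of the same computation.

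Two technical points in your (2)$\Rightarrow$(1) sketch need correcting. First, the interpolation of \Href{Lemma}{lem:inclusion_ellipsoids} is \emph{not} linear in the height coordinate: for a $d$-symmetric competitor $(A_0\oplus\alpha,a)$ the admissible path is $t\mapsto\big((I+t(A_0-I))\oplus\alpha^{t},\,ta\big)$, so its tangent at $t=0$ is $\big((A_0-I)\oplus\ln\alpha,\,a\big)$. Summing the first-order contact inequalities $\iprod{\xi_i'(0)}{\upthing{u}_i}\le0$ against the weights $c_i$ therefore yields
\[
\tr{A_0}+\ln\alpha\le d,
\]
not $\tr A\le d+1$. (Linear interpolation in the full $(d{+}1)\times(d{+}1)$ matrix would give your stated inequality but need not stay in $\upthing{K}$ --- exactly the obstacle you identified.) The conclusion $\det A_0\cdot\alpha\le1$ still follows, via $\ln\lambda\le\lambda-1$ on each eigenvalue of $A_0$, with equality forcing $A_0=I$ and $\alpha=1$. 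Second, ``$\sum c_iu_i=0$ forces $\upthing a=0$'' is not how the centre is pinned down: that condition was already consumed in the summation. Once $A_0=I,\alpha=1$, a nonzero translate $a$ is ruled out by the translate-interpolation \Href{Lemma}{lem:interpolation_translates}, which would otherwise produce an ellipsoid of strictly larger volume in $\upthing{K}$. Finally, to invoke \Href{Lemma}{lem:inclusion_ellipsoids} at all you must first reduce arbitrary competitors to $d$-symmetric ones; this is done by Steiner symmetrization about $\Red$, which fixes $\upthing{K}$, preserves volume, and sends ellipsoids to ellipsoids.
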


The implication from (1) to (2) is proved in more or less the same way as 
John's 
fundamental theorem about convex bodies, there are hardly any additional 
difficulties. The converse however, is not straightforward, since 
$\upthing{K}$ is not a convex body in general. That part of the proof relies 
heavily on the technique of interpolation between ellipsoids described in 
\Href{Section}{sec:interpolation}.

We note that non-convex sets in place of ellipsoids in a similar context for 
sets (not functions) were considered in \cite{BR02}. In our case, however, 
it is the set which contains the other (the ``container set'') which is 
non-convex, 
and that is the source of difficulties in finding the optimum (maximum volume 
or 
integral).

We give also an equialent, purely functional formulation of 
\Href{Theorem}{thm:johncondbasic} without reference to bodies in 
$(d+1)$-dimensional space, see \Href{Theorem}{thm:johncondfunc}.

In \Href{Section}{sec:furtherIneq}, we describe the relationship 
between the approach of Alonso-Guti{\'e}rrez, Gonzales Merino, Jim{\'e}nez and 
Villa 
\cite{alonso2018john} and our approach.

In \Href{Theorem}{thm:getbackbernardo}, we show that 
$\beta_0\chi_E$ is 
the 
limit (in a rather strong sense) of our \jsfunctions{} 
as $s$ tends to $0$, where $\beta_0$ is the height of \bernardoell{} $E$.

This result is based on the comparison of the $s$-volumes of \jsellipsoids{} for 
distinct values of $s$. We compare also these $s$-volumes and the integral of 
$f$ obtaining a bound on the integral ratio, the functional analogue of volume 
ratio.

In \Href{Section}{sec:sinfty}, we study the \jsfunctions{} as $s$ tends to 
infinity.
We show that the limit may only be a Gaussian density, see 
\Href{Theorem}{thm:s_infty_approx}. 
What is perhaps surprising is that the largest integral Gaussian density that is 
pointwise less than or equal to $f$ is not necessarily unique, see 
\Href{Section}{sec:non_unique_gaussian}. We show however, 
that in this case, the two Gaussians are translates of each other, see 
\Href{Theorem}{thm:infinity_ellipsoid}.

Finally, \Href{Section}{sec:BKP} contains the proof of our quantitative 
Helly type result. This is a non-trivial application of the results of the 
previous sections. We describe it in detail here.

For a positive integer $n$, we denote by $[n]$ the set $[n]=\{1,2,\ldots,n\}$.
For $m\leq n$, the family of subsets of $[n]$ of cardinality at most 
$m$ is denoted by $\binom{[n]}{\leq m}$.

According to Helly's theorem, \emph{if the intersection of a finite family of 
convex sets in $\Red$ is empty, then it has a subfamily of at most $d+1$ 
members such that the intersection of all members of the subfamily is empty.}

A quantitative variant of Helly's theorem was discovered by B\'ar\'any, 
Katchalski and Pach \cite{BKP82}, stating the following. \emph{
Let $K_1,\ldots,K_n$ be convex sets in $\Red$. Then there is 
a set $\sigma\in\binom{[n]}{\leq 2d}$ of at most $2d$ indices such 
that
\[
\vol{d}\left(\bigcap_{i\in\sigma}K_i\right) \leq 
c_d\vol{d}\left(\bigcap_{i\in[n]}K_i\right),
\]
where $c_d$ depends only on $d$.}

In \cite{BKP82}, it is shown that one can take $c_d=d^{2d^2}$ and it is 
conjectured that the theorem should hold with $c_d=d^{cd}$ for a proper 
absolute constant $c>0$. It was confirmed in \cite{Nas16} with $c_d\approx 
d^{2d}$, where it is also shown that such result will not hold with $c_d\ll 
d^{d/2}$. The argument in \cite{Nas16} was refined by Brazitikos \cite{Bra17} 
who showed that one may take $c_d\approx d^{3d/2}$. For more on quantitative 
Helly type results, see the surveys \cite{HW18survey,DGFM19survey}

Observe that the pointwise minimum of a family of log-concave 
functions is again log-concave. Our quantitative Helly type result is the 
following.

\newcommand{\hellyno}{3d+2}
\begin{thm}[]\label{thm:BKP}
Let $f_1,\ldots,f_n$ be upper semi-continuous log-concave functions  on $\Red$. 
For 
every $\sigma\subseteq [n]$, let $f_\sigma$ denote the pointwise minimum:
\begin{equation*}
 f_\sigma(x)=\min\{f_i(x)\st i\in \sigma\}.
\end{equation*}
Then there is a set $\sigma\in\binom{[n]}{\leq \hellyno}$ of at most $\hellyno$ 
indices such 
that, with the notation $f=f_{[n]}$, we have 
\begin{equation}\label{eq:BKPgoal}
\int_{\Red} f_\sigma\leq  \left(100 d\right)^{5d/2}\int_{\Red} f. 
\end{equation}
\end{thm}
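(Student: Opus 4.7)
The plan is to combine the uniqueness characterization of the \jsellipsoid{} with the integral-ratio bounds of \Href{Section}{sec:furtherIneq}. Apply the construction of \Href{Section}{sec:smeasure} to $f=f_{[n]}$ with an optimally chosen parameter $s>0$; let $g$ be the \jsfunction{} of $f$ and $\upthing{E}$ its \jsellipsoid{} in $\Redp$, contained in the symmetrized $s$-lifting $\upthing{K}$ of $f$. After an affine normalization we may take $\upthing{E}=\ball{d+1}$, so that \Href{Theorem}{thm:johncondbasic} supplies contact points $\upthing{u}_1,\ldots,\upthing{u}_k\in\bd{\ball{d+1}}\cap\bd{\upthing{K}}$ and positive weights $c_j$ with
\[
\sum_{j=1}^k c_j\,\upthing{u}_j\otimes \upthing{u}_j = \upthing{I},\qquad \sum_{j=1}^k c_j u_j = 0,
\]
where $u_j\in\Red$ is the orthogonal projection of $\upthing{u}_j=(u_j,\xi_j)$.

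Since $f=\min_i f_i$ and each $f_i$ is upper semi-continuous, at every contact point one can pick an index $i(j)\in[n]$ with $f_{i(j)}(u_j)=f(u_j)$; this ensures $\upthing{u}_j$ also lies on the boundary of the symmetrized $s$-lifting of $f_{i(j)}$. I would then thin the collection of contact points by a Carathéodory-type argument in $\mathrm{Sym}_{d+1}\oplus\Red$, using the trace identity $\sum c_j=d+1$ together with the block decomposition of $\upthing{u}_j\otimes\upthing{u}_j$ induced by the $\xi\mapsto-\xi$ symmetry of $\upthing{K}$, until at most $3d+2$ contact points remain and both identities above still hold. Setting $\sigma=\{i(j)\}$ over the retained contact points gives $|\sigma|\leq 3d+2$.

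The argument then closes as follows. Since $f\leq f_\sigma$ pointwise, the symmetrized $s$-lifting of $f_\sigma$ contains $\ball{d+1}$, and the retained contact points still lie on its boundary. The sufficient direction of \Href{Theorem}{thm:johncondbasic} therefore identifies $\ball{d+1}$ as the \jsellipsoid{} of $f_\sigma$ as well, so $f_\sigma$ has \jsfunction{} equal to $g$. Applying the integral-ratio bound from \Href{Section}{sec:furtherIneq} to the log-concave function $f_\sigma$ and optimizing over $s$ yields $\int f_\sigma\leq (100d)^{5d/2}\int g$, and since $g\leq f$ implies $\int g\leq\int f$, the inequality \eqref{eq:BKPgoal} follows.

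The main technical obstacle is the sparsification step. A bare Carathéodory in the cone generated by the pairs $(\upthing{u}_j\otimes\upthing{u}_j,u_j)$ inside $\mathrm{Sym}_{d+1}\oplus\Red$ produces the looser bound $\tfrac12(d+1)(d+2)+d$, which exceeds $3d+2$ for $d\geq 3$. Reaching $3d+2$ requires exploiting the central symmetry of $\upthing{K}$ in the $\xi$-direction so that the scalar $\xi^2$-component, the $d\times d$ outer-product block, and the off-diagonal coupling of the matrix identity can be sparsified jointly rather than separately; the final constant $(100d)^{5d/2}$ reflects the balance between this combinatorial count and the choice of $s$ in the integral-ratio estimate.
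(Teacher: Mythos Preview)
Your sparsification step cannot deliver $3d+2$ contact points while preserving both identities in the John decomposition. Even after you exploit the $\xi\mapsto-\xi$ symmetry so that the matrix constraint lives in the block-diagonal subspace $\{H\oplus\gamma: H\in\mathrm{Sym}_d,\ \gamma\in\Re\}$, the pair of constraints $(\sum c_j\upthing{u}_j\otimes\upthing{u}_j,\sum c_j u_j)=(\upthing{I},0)$ sits in a space of dimension $\tfrac{d(d+1)}{2}+1+d$, so Carath\'eodory leaves you with roughly $\tfrac{d^2}{2}$ contact points, not $O(d)$. This is not a matter of a sharper argument: already for the classical John ellipsoid in $\Red$ there are bodies for which every contact decomposition of the identity requires $\Omega(d^2)$ points, so no purely convex-geometric reduction to $O(d)$ points can keep both identities exact. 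Your sentence ``the sufficient direction of \Href{Theorem}{thm:johncondbasic} therefore identifies $\ball{d+1}$ as the \jsellipsoid{} of $f_\sigma$'' thus rests on a step that fails for $d\geq 4$.

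The paper's proof avoids this obstacle entirely by \emph{not} asking that $\ball{d+1}$ remain the \jsellipsoid{} of $f_\sigma$. Instead it works with $s=1$ and uses the contact points only to extract pointwise upper bounds on $f_\sigma$. A Dvoretzky--Rogers selection picks $d+1$ well-spread contact points, from which a polytope $P$ of controlled volume is built; a further $d$ contact points (via Carath\'eodory on a single boundary point of the projected contact polytope) are used so that inequality \eqref{eq:iprodboundforP} holds. For each of these $\leq 2d+1$ contact points $\upthing{u}_j$ one chooses a single index $i(j)$ with $\upthing{u}_j\in\bd{\slift[1]{f_{i(j)}}}$, and \Href{Corollary}{cor:touchingballbound} gives an exponential tail bound on $f_{i(j)}$. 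A separate Helly argument selects $d+1$ further indices controlling $\norm{f_\sigma}$. The integral of $f_\sigma$ is then bounded by splitting $\Red$ into $P$-like and tail regions, with no appeal to the John condition for $f_\sigma$ or to the integral-ratio corollary. The upshot is that the $3d+2$ count comes from $(2d+1)+(d+1)$ via two geometric selections, not from sparsifying the John decomposition.
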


The characteristic function of a convex set is log-concave, and pointwise 
minimum of functions corresponds to intersection of sets. Thus, 
\Href{Theorem}{thm:BKP} yields a quanitative Helly type result about convex sets 
as a special case. When comparing 
quanitative Helly type results, one may consider the Helly number and the bound 
on the volume 
(integral). Regarding the Helly number, on the one hand, we show in 
\Href{Subsection}{sec:BKPlowerbound} that in our functional case, it is at least 
$2d+1$, 
unlike in the case of convex sets, where it is $2d$.
Our bound on the integral is of the right order of magnitude, as it can not be 
improved beyond $d^{d/2}$ even for convex sets, see \cite{Nas16}.

At the expense of obtaining a much worse bound on the integral in place of the 
multiplicative constant $d^{5d/2}$, we can show a similar result with Helly 
number $2d+1$ instead of $3d+2$. That result will be part of a sequel to the 
present paper.

We note also that our proof of this functional result does not make use of the 
analogous statement for convex sets.

\subsection{Notation, Basic Terminology}\label{sec:notation}
We denote the Euclidean unit ball in $\Re^n$ by $\ball{n},$
and we
write $\enorm{\cdot}$ for the Euclidean norm.

We identify the hyperplane in $\Redp$ spanned by the first $d$ standard basis 
vectors with $\Red$.
A set $C\subset\Redp$ is \emph{\dsymm} if $C$ is symmetric about $\Red,$
that is, if $(2P-I)C=C$, where $P:\Redp\to\Redp$ is the orthogonal projection 
onto $\Red$.

For a square matrix $A\in \Re^{d\times d}$ and a scalar $\alpha \in \Re$, we 
denote by $A\oplus \alpha$ the $(d+1)\times(d+1)$ matrix 
\[
 A\oplus \alpha=\left(
 \begin{array}{cc}
  A&0\\
  0&\alpha
 \end{array}
 \right).
\]

For a function $f:\Red\to\Re$ and a scalar 
$\alpha\in\Re$, we denote the \emph{superlevel set} $\{x\in\Red\st 
f(x)\geq\alpha\}$ by 
$[f\geq\alpha]$.
The \emph{epigraph} of $f$ is the set 
$\operatorname{epi}(f)=\{(x,\xi)\in\Redp\st \xi\geq f(x)\}$ in $\Redp$. 
The $L_{\infty}$ norm of a function $f$ is denoted by $\norm{f}$.

We will say that a function $f_1: \R^d \to \R$  is \emph{below} a function 
$f_2: \R^d \to \R$, and denote it as $f_1\leq f_2$,
 if $f_1$ is pointwise less than or equal to $f_2$, that is, 
 $f_1 (x) \leq f_2 (x)$ for all $x \in \R^d.$

A function $\psi:\Red\to\Re\cup\{\infty\}$ is called \emph{convex} if 
$\psi((1-\lambda)x+\lambda y)\leq (1-\lambda)\psi(x)+\lambda\psi(y)$ for every 
$x,y\in\Red$ and $\lambda\in[0,1]$. 
A function $f$ on $\Red$ is \emph{logarithmically concave} (or 
\emph{log-concave} for short) if $f=e^{-\psi}$ for a convex function $\psi$ on 
$\Red$. We say that a log-concave function $f$ on $\Red$ is a \emph{proper 
log-concave function} if $f$ is upper semi-continuous and has finite positive 
integral.

We will use $\prec$ to denote the standard partial order on the cone of 
positive semi-definite matrices, that is,
we will write $A \prec B$ if $B - A$ is positive definite.
We recall the additive and the multiplicative form of 
\emph{Minkowski's determinant  inequality}. Let $A$ and $B$ be 
positive definite matrices of order $d$. Then, for any $\lambda \in (0,1),$
\begin{equation}
\label{eq:minkowski_det_ineq}
\left(\det\left( \lambda A + (1 - \lambda)B\right)\right)^{1/d} \geq
\lambda \left(\det A\right)^{1/d} + 
(1 -\lambda)\left(\det B\right)^{1/d},
\end{equation}
with equality if and only if $A = cB$ for some $c > 0;$ and 
\begin{equation}
\label{eq:minkowski_det_multipl_ineq}
\det\left( \lambda A + (1 - \lambda)B\right) \geq
\left(\det A\right)^{\lambda} \cdot \left(\det B\right)^{1 -\lambda},
\end{equation}
with equality if and only if $A = B.$
\section{The \tpdfs-volume, the \tpdfs-lifting and the 
\tpdfs-ellipsoids}\label{sec:smeasure}

\subsection{Motivation for the definitions}\label{sec:motivation}

One way to obtain a log-concave function $f$ on $\Red$ is to fix a convex body 
$\upthing{K}$ in $\Re^{d+s}$ for some positive integer $s$, take the 
\emph{uniform measure} on $\upthing{K}$ (that is, the absolutely continuous 
measure whose density is the characteristic function of $\upthing{K}$) and take 
the density of its marginal on $\Red$. Conversely, it is well known that any 
log-concave function is a limit of functions obtained this way. 
This representation of log-concave functions was used by Artstein-Avidan, 
Klartag and Milman in \cite{AKM04}, where a functional form of the Santal\'o 
inequality is proved.

If $f$ is obtained this way, then it is natural to consider the largest volume 
($(d+s)$-dimensional) ellipsoid contained in $\upthing{K}$, and take the 
uniform 
measure on this ellipsoid. The marginal on $\Red$ of this measure could be a 
candidate for the John ellipsoid function on $f$.
However, for a given $f$, the convex body $\upthing{K}$ in $\Re^{d+s}$ 
described above is not unique, if it exists. One may take the 
\emph{Schwarz symmetrization} of any such $\upthing{K}$ about $\Red$ (defined 
in 
\Href{Subsection}{sec:motivationagain}) to obtain a new convex body in 
$\Re^{d+s}$ which is now symmetric about $\Red$ and still has the property that 
the density of the marginal on $\Red$ of the uniform measure on it is $f$. 
Since 
the Schwarz symmetrization of an ellipsoid is again an ellipsoid, the John 
ellipsoid of the Schwarz symmetrization of $\upthing{K}$ is at least as large 
as 
the John ellipsoid of $\upthing{K}$. In summary, the marginal on 
$\Red$ of the uniform measure on the John ellipsoid of the Schwartz 
symmetrization of 
$\upthing{K}$ is a function of special form, and is below $f$. Moreover, it is 
of maximal integral among functions of this special form that are below $f$. 
This is now a good candidate for the John function of $f$.

With one more idea, we can reduce the dimension from $d+s$ to $d+1$. In fact, 
due to the symmetry about $\Red$, there is no need to consider a body in 
$\Re^{d+s}$. Instead, we may consider the section of this body by the linear 
subspace spanned by $\Red$ and any vector which is not in $\Red$, say 
$e_{d+1}$. 
We just need to remember that the last coordinate in $\Redp$ represents $s$ 
coordinates when it comes to computing the marginal of the uniform distribution 
of a convex body in $\Redp$.

In what follows, we formalize this reasoning without referring to any 
$(d+s)$-dimensional convex body. An advantage of the formalism that follows is 
that it works for non-integer $s$, as well as for any proper log-concave 
function $f$, and not only for functions obtained as the marginals of the 
uniform measure on some higher dimensional convex set.

We will mostly study objects in $\Red$ and in $\Redp$. For an easier reading, 
we 
emphasize that a set is in $\Redp$ by using a bar in its symbol, e.g. 
$\upthing{K}$.

\subsection{The \tpdfs-volume and its \tpdfs-marginal}

Fix a positive real $s$. For every $x\in\Red$, we denote the line in $\Redp$
perpendicular to $\Red$ at $x$ by $\ell_x$.

Let $\upthing{C}\subset\Redp$ be a \dsymm{} Borel set. 
The \emph{$s$-volume} of $\upthing{C}$ is defined by
\[\smeasure{\upthing{C}}=\int_{\Red} 
\left[\frac{1}{2}\length{\upthing{C}\cap\ell_x}\right]^s \di x.\] 

Note that $\smeasure{\cdot}$ is not a measure on $\Redp$. However, for 
any \dsymm{} Borel set $\upthing{C}$ in $\Redp$, the \emph{$s$-marginal} of 
$\upthing{C}$ on $\Red$ defined for any Borel set $B$ in $\Red$ by 
\begin{equation}\label{eq:marginaldef}
\smarg{\upthing{C}}(B)=\int_{B} 
\left[\frac{1}{2}\length{\upthing{C}\cap\ell_x}\right]^s \di x 
\end{equation}
is a measure on $\Red$.

We note that for any matrix $\upthing{A}=A\oplus\alpha$, where 
$A\in\Re^{d\times d}$ and $\alpha\in\Re$, any 
\dsymm{} set $\upthing{C}$ in $\Redp$ and any Borel set $B$ in $\Red$, we have
\begin{equation}\label{eq:lintrafo}
\left\{
\begin{array}{lcl}
\smarg{\upthing{A}\, \upthing{C}}(AB)&=&|\det A|\cdot |\alpha|^s\cdot 
\smarg{\upthing{C}}(B),\\ 
\smeasure{\upthing{A}\, \upthing{C}}&=&|\det A| \cdot|\alpha|^s 
\cdot\smeasure{\upthing{C}}.
\end{array}
\right.
\end{equation}


\subsection{The \tpdfs-lifting of a function}

Let $f:\Red\to [0, \infty)$ be a function and $s>0$. The \emph{$s$-lifting} of 
$f$ is a 
\dsymm{} set in $\Redp$ defined by
\[
\slift{f} = 
\left\{ (x,\xi) \in \Redp \st
 |\xi| \le 
\left(f(x)\right)^{1/s}\right\}.
\] 
Note the following \emph{scaling property} of $s$-lifting: for any 
$\gamma>0$, 
\begin{equation}\label{eq:scaleliftf}
\slift{(\gamma f)}=\left(I \oplus \gamma^{1/s}\right)\slift{f}. 
\end{equation}

Clearly, for any Borel set $B$ in $\Red$,
\begin{equation*}
 \int_B f = \smeasure{\slift{f}\cap \left(B\times \Re\right)},
\end{equation*}
that is,
$
 \smarg{\slift{f}} \text{ is the measure on }\Red\text{ with density }f.
$

\subsection{Ellipsoids}
Let $A$ be a positive definite matrix in $\Re^{d\times d}$ and $a\in\Red$.
They determine an \emph{ellipsoid} defined by
\begin{equation}\label{eq:ellipsoid_operator_duality}
A\left(\ball{d}\right)+a.
\end{equation}
Note that  $A\left(\ball{d}\right)+a=\{x \in \Red \st \iprod{A^{-1} x}{A^{-1} 
x} \le 1 \}+a$.

We will consider \dsymm{} ellipsoids in $\Redp$ (see 
\Href{Section}{sec:notation} for the definition of $d$-symmetry). To describe 
them, we introduce the vector space
\begin{equation}\label{eq:Mdef}
 \MM=\left\{(\upthing{A},a)\st \upthing{A}\in\Re^{(d+1)\times(d+1)},\; 
\upthing{A}^{\intercal}=\upthing{A},\; a\in\Red\right\},
\end{equation}
and the { convex} cone 
\begin{equation}\label{eq:Edef}
 \ellips=\left\{(A\oplus\alpha,a)\in\MM,\;A\in\Re^{d\times d} \text{ positive 
definite}, \alpha>0\right\}.
\end{equation}
Clearly, any \dsymm{} ellipsoid in $\Redp$ is represented by 
\[
 (A\oplus\alpha)\ball{d+1}+a,
\]
in a unique way. Thus, from this point on, we identify $\ellips$ with the set of 
all \dsymm{} ellipsoids in $\Redp$, 
and in particular, we may write $\smeasure{(A\oplus\alpha,a)}$ to refer to the 
$s$-volume of the corresponding ellipsoid.
We note that
\begin{equation}\label{eq:MMdim}
 \dim\MM=\frac{(d+1)(d+2)}{2}+d.
\end{equation}

\subsection{Definition of the John \tpdfs-ellipsoid of a function}
\label{sec:sellipsoid}
Fix $s>0$ and let $z(f,s)$ denote the supremum of the $s$-volumes of all 
\dsymm{} ellipsoids 
$\upthing{E}$ in $\Redp$ with $\upthing{E}\subseteq \slift{f}$. 
\Href{Lemma}{lem:boundedness} and 
a standard compactness argument yield that this supremum is attained. We will 
see (\Href{Theorem}{thm:johnunicity}) that it is attained on a unique 
ellipsoid. We call this 
ellipsoid in $\Redp$ the \emph{\jsellipsoid} of $f$ and denote it by 
$\sellbody{f}$. We call the $s$-marginal of $\sellbody{f}$ the 
\emph{\jsfunction} of $f$, and 
denote its density by
\[
 \selldense{f} = \text{ the density of } \smarg{\sellbody{f}}.
\] 

As a consequence of 
\eqref{eq:scaleliftf}, we note the \emph{scaling property} of $s$-ellipsoids: 
for any $s,\gamma>0$,
\begin{equation}\label{eq:scaleellips}
{\upthing{E}}
\text{ is the John } s\text{-ellipsoid  of } f
\text{ if and only if } 
{\left(I\oplus(\gamma^{1/s})\right)\upthing{E}}
\text{ is the John}\ s\text{-ellipsoid  of }\gamma f,
\end{equation}
or, equivalently, 
$
 \selldense{f}
$
is the \jsfunction{} of $f$
 if and only if
${\gamma  \cdot \selldense{f}}$
is the \jsfunction{} of $\gamma f.$
Similarly, for any affine map $\mathcal{A}: \Red \to \Red,$
$
 \selldense{f}
$
is the \jsfunction{} of $f$
 if and only if 
${\selldense{f}}\circ \mathcal{A}$
is the \jsfunction{} of $f \circ \mathcal{A}.$

\subsection{How the definitions described above implement the idea described in 
\ref{sec:motivation}}\label{sec:motivationagain}

We return to the case when $s$ is a positive 
integer. We first recall a classical definition.

We regard $\Re^{d+s}$ as the orthogonal sum $\Re^{d+s}=\Red\oplus\Re^s$, and 
denote by $\ball{s}$ the unit ball of $\Re^s$.
Let $\upthing{K}$ be a convex body in $\Re^{d+s}$. The \emph{Schwarz 
symmetrization} of $\upthing{K}$ about $\Red$ is the set
\[
 \upthing{K}^{\prime}=\bigcup \left\{r\ball{s}+x\st x\in P 
\!\left(\upthing{K}\right), 
 \vol{s}\left(r\ball{s}\right)= 
\vol{s}\left(\upthing{K}\cap\left(x+\Re^s\right)\right)\right\},
\]
where $P$ denotes the orthogonal projection from $\Re^{d+s}$ onto $\Red$, cf. 
\cite[Section~9.2.1.I]{BZ88}.
  As a well known consequence of the Brunn--Minkowski inequality, we have that 
$\upthing{K}^{\prime}$ is a convex body in $\Re^{d+s}.$ It is immediate from the 
definition that 
$\vol{d+s} \upthing{K}^{\prime} = 
\vol{d+s}\upthing{K}$, 
and more generally, the marginal on $\Red$ of the uniform measure on 
$\upthing{K}$ is identical to the marginal of the uniform measure on 
$\upthing{K}^{\prime}$.

The following claim follows from our definitions, we leave the proof to the 
reader.
\begin{prp}
Let $d,s>0$ be positive integers and let the function $f:\Red\rightarrow\Re$ be 
the density of the marginal on $\Red$ of the uniform measure on a convex body 
$\upthing{K}$ in $\Re^{d+s}$. Let $\upthing{K}^{\prime}$ denote the Schwarz 
symmetrization of $\upthing{K}$ about $\Red$, and let $\upthing{E}$ denote the 
John ellipsoid of $\upthing{K}^{\prime}$.
Then the marginal of the uniform measure on $\upthing{E}$ is the \jsfunction{} 
of $f$.
\end{prp}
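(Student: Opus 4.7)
The plan is to set up a size-preserving bijection between rotationally symmetric ellipsoids in $\Re^{d+s}$ contained in $\upthing{K}'$ and the $d$-symmetric ellipsoids in $\Redp$ contained in $\slift{f}$, then read off that the marginals agree pointwise.

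First I would unpack $\upthing{K}'$ concretely. Since $f(x)=\vol{s}\bigl(\upthing{K}\cap(x+\Re^s)\bigr)$, the definition of Schwarz symmetrization gives $\upthing{K}'\cap(x+\Re^s)=r(x)\ball{s}$ with $\kappa_s r(x)^s=f(x)$, i.e., $r(x)=(f(x)/\kappa_s)^{1/s}$. Next I would show that the John ellipsoid $\upthing{E}$ of $\upthing{K}'$ must have the form
\[
\upthing{E}=(A\oplus \alpha I_s)\ball{d+s}+a,\qquad A\in\Re^{d\times d}\text{ positive definite},\ \alpha>0,\ a\in\Red.
\]
Indeed, $\upthing{K}'$ is preserved by every $R=I_d\oplus Q$ with $Q\in O(s)$, so by uniqueness of the classical John ellipsoid, $R(\upthing{E})=\upthing{E}$. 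A block-matrix computation (using that the positive definite matrix defining an ellipsoid is unique) forces the off-diagonal block to vanish and the lower-right block to be scalar, while $R$-invariance of the center gives $a\in\Red$.

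Next I would set up the bijection
\[
(A\oplus \alpha I_s)\ball{d+s}+a \;\longleftrightarrow\; (A\oplus \kappa_s^{1/s}\alpha)\ball{d+1}+a
\]
between these rotationally symmetric ellipsoids in $\Re^{d+s}$ and all of $\mathcal{E}$. Comparing sections at a point $x\in\Red$, the section radius of the left-hand side equals $\alpha\sqrt{\max\{1-|A^{-1}(x-a)|^2,0\}}$, while the half-length of the segment $[(A\oplus \kappa_s^{1/s}\alpha)\ball{d+1}+a]\cap\ell_x$ is $\kappa_s^{1/s}$ times that quantity. Since $f(x)^{1/s}=\kappa_s^{1/s}r(x)$, the inscribed condition $\subseteq\upthing{K}'$ on the left is equivalent to $\subseteq\slift{f}$ on the right. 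Using \eqref{eq:lintrafo}, the two ``sizes'' are
\[
\vol{d+s}\!\bigl((A\oplus \alpha I_s)\ball{d+s}+a\bigr)=\kappa_{d+s}\det(A)\alpha^s,\qquad \smeasure{(A\oplus \kappa_s^{1/s}\alpha)\ball{d+1}+a}=\kappa_s\smeasure{\ball{d+1}}\det(A)\alpha^s,
\]
so each is a positive $(d,s)$-constant multiple of $\det(A)\alpha^s$. Hence the maximizers correspond: $\upthing{E}$ on the $\Re^{d+s}$ side corresponds to $\sellbody{f}$ on the $\Redp$ side.

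Finally I would verify that the marginals match pointwise. With $\rho(x)=\alpha\sqrt{\max\{1-|A^{-1}(x-a)|^2,0\}}$, the section $\upthing{E}\cap(x+\Re^s)$ is a Euclidean ball in $\Re^s$ of radius $\rho(x)$ and hence $s$-volume $\kappa_s\rho(x)^s$, so by Fubini the marginal of the uniform measure on $\upthing{E}$ has density $\kappa_s\rho(x)^s$. On the other hand, the section of $\sellbody{f}=(A\oplus \kappa_s^{1/s}\alpha)\ball{d+1}+a$ at $x$ has half-length $\kappa_s^{1/s}\rho(x)$, so $\selldense{f}(x)=\bigl(\tfrac{1}{2}\length{\sellbody{f}\cap\ell_x}\bigr)^s=\kappa_s\rho(x)^s$. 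The two densities agree. The main (and only mildly subtle) obstacle is the symmetry argument that pins down the shape of $\upthing{E}$; the remainder is a direct bookkeeping exercise.
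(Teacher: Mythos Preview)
Your argument is correct and is precisely the formalization the paper has in mind; indeed, the paper does not give a proof at all but states that the claim ``follows from our definitions'' and leaves it to the reader, after having sketched the same idea informally in the motivation (Subsection~\ref{sec:motivation}). Your symmetry argument for the shape of $\upthing{E}$, the bijection $(A\oplus\alpha I_s,a)\leftrightarrow(A\oplus\kappa_s^{1/s}\alpha,a)$, the equivalence of the two containment conditions via $r(x)=(f(x)/\kappa_s)^{1/s}$, and the proportionality of the two volume functionals to $\det(A)\,\alpha^s$ are exactly the ingredients implicit in that discussion.
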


\subsection{The height function of an ellipsoid, and formulation of our problem 
as functional optimization}\label{sec:height}
For any $(A \oplus \alpha, a) \in \ellips,$ we will say that $\alpha$ is the 
\emph{height} of the ellipsoid $\upthing{E}=(A \oplus \alpha)\ball{d+1} + a.$
We define the \emph{height function} of $\upthing{E}$ as
\[
\shf{\upthing{E}} (x) = 
 \begin{cases}
\alpha \sqrt{1 - \iprod{A^{-1} (x - a)}{A^{-1} (x - a)}},& 
\text{ if }x \in A \ball{d}+a\\
0,&\text{ otherwise}.
 \end{cases}
\]
Note that the height function of an ellipsoid is a proper log-concave function.
Clearly, the inclusion $\upthing{E} \subset \slift{f}$
holds if and only if
\begin{equation}
\label{eq:height_relation}
\shf{\upthing{E}} (x + a) \leq f^{1/s}(x + a) \text{ for 
all }
x \in A \ball{d}.
\end{equation}

As a closing note of the present introductory section, we rephrase our problem 
in a less geometric, more analytical language.

The classical John ellipsoid can be introduced as follows.
We consider the  class  of all nonsingular affine images (we may call them 
\emph{positions}) of the unit ball $\ball{d}$
contained in a given convex body $K$. The John ellipsoid is the (unique) largest 
volume element of this family. 

With the notion of height functions, it is easy to extend this approach to the 
setting of log-concave function. 
For any $s>0$, the  \jsfunction{} of a proper log-concave function $f$ on $\Red$ 
is the (unique) solution to the problem
\[
\max\limits_{h} \int_{\R^d} h^{s},
\]
where the maximum is taken over those positions 
\[
\left\{
h(x)=\alpha \cdot \shf{\ball{d+1}}(A^{-1}(x-a)),\ \text{where}\ a \in \Red,\ 
A \in \operatorname{GL}(d), \ \alpha > 0
\right\}
\]
of the height function 
\begin{equation}\label{eq:heightball}
\shf{\ball{d+1}}(x) = 
 \begin{cases}
\sqrt{1 - |x|^2},& 
\text{ if }x \in \ball{d}\\
0,&\text{ otherwise}
 \end{cases} 
\end{equation}
of the unit ball $\ball{d+1}$ which satisfy $\shf{\ball{d+1}}(A^{-1}(x-a))\leq 
f(x)$ for all $x\in\Red$. 

It follows from the polar decomposition theorem that we may restrict the set of 
positions to those where $A$ is a positive definite matrix.

\section{Some basic inequalities}
\label{sec:basicbounds}
\subsection{The \tpdfs-volume of ellipsoids}\label{sec:basicboundsellipsoid}
We denote the $s$-volume of the ball $\ball{d+1}$ of unit radius 
centered at the origin in $\Redp$ by $\volbs$, and compute it using 
spherical coordinates.
\begin{equation}\label{eq:kappasfirst}
 \volbs =\smeasure{\ball{d+1}}=
 \int\limits_{\ball{d}}\left(\sqrt{1 - |x|^2}\right)^{s} \di x =  
 \vol{d-1}S \int\limits_0^1 r^{d-1} (\sqrt{1 - r^2})^{s} \di r =  
\end{equation}
\begin{equation*}
 \frac{\vol{d-1} S}{2}
\int\limits_0^1 t^{(d-2)/2} (1 - t)^{s/2} \di t = 
 \frac{d\vol{d} \ball{d}}{2}
  \frac{\gammaf{ s/2 +1} \gammaf{ d/2}}{
  \gammaf{ s/2 + d/2 + 1}} = 
  \pi^{d/2} 
  \frac{\gammaf{ s/2 +1}}{ \gammaf{ s/2 + d/2 + 1}} , 
\end{equation*}
where $S=\bd{\ball{d}}$ denotes the unit sphere in $\Red$, and $\gammaf{\cdot}$ 
is Euler's Gamma function.

Note that
\begin{equation}\label{eq:kappaszero}
\lim_{s\to0^{+}} \volbs=\vol{d} \ball{d}.
\end{equation}
Thus, $\volbs,$ as a function of $s$ on  $[0, \infty)$
with $\volbs[0] = \vol{d} \ball{d},$  is a strictly decreasing continuous 
function on $[0, \infty)$.

By \eqref{eq:lintrafo}, the $s$-volume of a \dsymm{} ellipsoid can be expressed 
as
\begin{equation}\label{eq:sfunc_of_ellipsoida}
\smeasure{(A\oplus\alpha)\ball{d+1}+a} = \volbs \alpha^s \det A,
\end{equation}
 for any $(A\oplus\alpha, a)\in\ellips.$

\subsection{Bounds on \texorpdfstring{$f$}{f} based on local behaviour}

\begin{lemma}\label{lem:touching_cond_log_concave}
Let $\psi_1$ and $\psi_2$ be  convex functions on $\Red$ and $f_1= e^{-\psi_1}$ 
and $f_2 = e^{-\psi_2}.$ 
Let $f_2\leq f_1$ and $f_1(x_0) = f_2(x_0) >0$ at some point $x_0$ 
in 
the interior of the domain of $\psi_2.$ 
Assume that $\psi_2$ is differentiable at $x_0.$ Then $f_1$ and $f_2$ 
are differentiable at $x_0,$ 
$\nabla f_1(x_0)= \nabla f_2(x_0)$ 
and the following holds
\[
f_1(x) \leq f_2(x_0)e^{- \iprod{\nabla\psi_2(x_0)}{x-x_0}}
\]
for all $x \in \R^d.$
\end{lemma}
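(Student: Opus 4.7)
The plan is to pass everything to the convex side. Writing $\psi_i = -\log f_i$, the hypothesis $f_2 \le f_1$ with equality at $x_0 > 0$ translates to $\psi_1 \le \psi_2$ with equality at $x_0$. Since $x_0$ lies in the interior of $\operatorname{dom}(\psi_2)$ and $\psi_1 \le \psi_2 < \infty$ in a neighborhood of $x_0$, the point $x_0$ also lies in the interior of $\operatorname{dom}(\psi_1)$. The goal then reduces to showing that $\psi_1$ is differentiable at $x_0$ with $\nabla\psi_1(x_0) = \nabla\psi_2(x_0)$, after which the claimed exponential upper bound is just the standard subgradient inequality applied to $\psi_1$ and exponentiated.

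The key step is a transfer of subgradients from $\psi_1$ to $\psi_2$. Since $\psi_1$ is convex and $x_0$ is an interior point of its domain, the subdifferential $\partial\psi_1(x_0)$ is non-empty. Pick any $p \in \partial\psi_1(x_0)$, so
\[
\psi_1(x) \;\ge\; \psi_1(x_0) + \iprod{p}{x-x_0} \qquad \text{for all } x\in\Red.
\]
Using $\psi_2(x) \ge \psi_1(x)$ and $\psi_2(x_0) = \psi_1(x_0)$, this yields
\[
\psi_2(x) \;\ge\; \psi_2(x_0) + \iprod{p}{x-x_0},
\]
so $p \in \partial\psi_2(x_0)$. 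But $\psi_2$ is differentiable at $x_0$, which for a convex function forces $\partial\psi_2(x_0) = \{\nabla\psi_2(x_0)\}$. Hence every $p\in\partial\psi_1(x_0)$ equals $\nabla\psi_2(x_0)$, which means $\partial\psi_1(x_0)$ is a singleton, and for convex functions on the interior of the domain this is equivalent to differentiability; in particular $\nabla\psi_1(x_0)=\nabla\psi_2(x_0)$. Differentiability of $f_i = e^{-\psi_i}$ at $x_0$ and the equality $\nabla f_1(x_0) = \nabla f_2(x_0)$ then follow from the chain rule, using that $f_1(x_0)=f_2(x_0)>0$.

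Finally, applying the subgradient inequality to $\psi_1$ at $x_0$ with its (now unique) gradient $\nabla\psi_2(x_0)$ and using $\psi_1(x_0) = \psi_2(x_0)$ gives
\[
\psi_1(x) \;\ge\; \psi_2(x_0) + \iprod{\nabla\psi_2(x_0)}{x-x_0} \qquad \text{for all } x \in \Red,
\]
and exponentiating this inequality produces exactly
\[
f_1(x) \;\le\; f_2(x_0)\, e^{-\iprod{\nabla\psi_2(x_0)}{x-x_0}}.
\]

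The main (and essentially only) technical point is the equivalence, for a convex function on the interior of its domain, between having a unique subgradient and being differentiable; everything else is routine manipulation of the relations $f_i = e^{-\psi_i}$ and $\psi_1 \le \psi_2$. No additional regularity hypothesis on $\psi_1$ is needed because local boundedness of $\psi_1$ near $x_0$ is inherited from $\psi_2$.
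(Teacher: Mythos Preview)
Your proof is correct and follows essentially the same approach as the paper's: pass to the convex functions $\psi_i$, use the inclusion $\partial\psi_1(x_0)\subseteq\partial\psi_2(x_0)=\{\nabla\psi_2(x_0)\}$ to force differentiability of $\psi_1$ with the right gradient, then apply the subgradient inequality. The only difference is cosmetic---the paper invokes Clarke's Subdifferential Maximum Rule for the gradient transfer step, whereas you prove that inclusion directly in two lines, which is if anything more transparent.
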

\begin{proof}
Since $f_2\leq f_1,$ the epigraph of $\psi_1$ contains the epigraph of 
$\psi_2$. 
Next, since $f_1(x_0) = f_2(x_0)$  and $f_2(x_0)$ is differentiable at $x_0$,
we conclude that both $f_1$ and $f_2$ have finite values and are continuous in a 
neighborhood of $x_0$
(see  Proposition~2.2.6 of \cite{clarke1990optimization}).

Using the Subdifferential Maximum Rule (see  Proposition~2.3.12 of 
\cite{clarke1990optimization}),
we see that $\psi_1$ is differentiable at $x_0$ and 
$\nabla \psi_1(x_0) = \nabla \psi_2(x_0)$, since the subdifferential
of $\psi_2$ at $x_0$ consists of the single vector $\nabla \psi_2(x_0)$. 

By the convexity of $\psi_1$, we have
\[
\psi_1(x) \geq \psi_1 (x_0) + \iprod{\nabla \psi_1(x_0)}{x - x_0} = \psi_2 
(x_0) + \iprod{\nabla \psi_2(x_0)}{x - x_0}
\]
for all $x \in \R^d$, and the result follows.
\end{proof}

\begin{cor}\label{cor:touchingballbound}
 Let $f$ be a log-concave function on $\Red$, and $s>0$. Assume that 
$\ball{d+1}\subseteq\slift{f}$ and $\upthing{u}\in\Redp\setminus\Red$ is a 
\emph{contact point} of $\ball{d+1}$ and $\slift{f}$, that is, 
$\upthing{u}\in\bd{\ball{d+1}}\cap\bd{\slift{f}}\setminus\Red$. Then
\begin{equation}\label{eq:touchingballbound}
 f(x)\leq w^s e^{- \frac{s}{w^2}\iprod{u}{x-u}}
 \;\;\text{ for all } x\in\Red,
\end{equation}
where $u$ is the orthogonal projection of $\upthing{u}$ onto $\Red$ and 
$w=\sqrt{1-|u|^2}$.
\end{cor}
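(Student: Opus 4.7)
The plan is to reduce the claim directly to \Href{Lemma}{lem:touching_cond_log_concave} by identifying the height function of $\ball{d+1}$, raised to the $s$-th power, as the log-concave "lower bound" $f_2$.

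First, I would unpack the contact point condition. Writing $\upthing{u}=(u,\xi)\in\Redp$ with $u=P\upthing{u}\in\Red$, membership in $\bd{\ball{d+1}}$ gives $|u|^2+\xi^2=1$, so $|\xi|=\sqrt{1-|u|^2}=w$; the assumption $\upthing{u}\notin\Red$ forces $w>0$ and hence $|u|<1$. Membership in $\bd{\slift{f}}$ together with $\ball{d+1}\subseteq\slift{f}$ gives $|\xi|=f(u)^{1/s}$, so $f(u)=w^{s}$.

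Next, I would introduce the comparison function $g(x)=\bigl(\shf{\ball{d+1}}(x)\bigr)^{s}=(1-|x|^{2})^{s/2}$ for $x\in\ball{d}$ and $g(x)=0$ otherwise. The defining inclusion $\ball{d+1}\subseteq\slift{f}$ translates into $\shf{\ball{d+1}}(x)\leq f(x)^{1/s}$ for $x\in\ball{d}$, i.e., $g\leq f$ on $\Red$. Moreover, $g(u)=w^{s}=f(u)>0$, so $f$ and $g$ coincide at the interior point $u$ of $\ball{d}$, which is the domain of $\psi_{2}:=-\log g=-(s/2)\log(1-|x|^{2})$. The function $\psi_{2}$ is smooth on $\inter{\ball{d}}$, with
\[
\nabla\psi_{2}(u)=-\tfrac{s}{2}\cdot\tfrac{-2u}{1-|u|^{2}}=\tfrac{s}{w^{2}}\,u.
\]

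Now I would apply \Href{Lemma}{lem:touching_cond_log_concave} to $f_{1}=f$ and $f_{2}=g$ at $x_{0}=u$: all hypotheses (log-concavity, $f_{2}\leq f_{1}$, equality at an interior point of $\mathrm{dom}\,\psi_{2}$, differentiability of $\psi_{2}$ there) are in place. The conclusion reads
\[
f(x)\leq f(u)\,e^{-\iprod{\nabla\psi_{2}(u)}{x-u}}=w^{s}\,e^{-\frac{s}{w^{2}}\iprod{u}{x-u}}
\]
for all $x\in\Red$, which is precisely \eqref{eq:touchingballbound}. There is no real obstacle here; the only delicate point is ensuring $u\in\inter{\ball{d}}$ (so the gradient computation and the lemma apply), which is exactly guaranteed by the hypothesis $\upthing{u}\notin\Red$.
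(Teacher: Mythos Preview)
Your proof is correct and follows essentially the same route as the paper: apply \Href{Lemma}{lem:touching_cond_log_concave} at the projected contact point $u$ and compute the gradient of $-\log$ of the height function. The only cosmetic difference is that the paper applies the lemma to $f_1=f^{1/s}$ and $f_2=\shf{\ball{d+1}}$ and then raises to the $s$-th power, whereas you apply it directly to $f_1=f$ and $f_2=\shf{\ball{d+1}}^{s}$; the two are equivalent.
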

Note that since $u\notin\Red$, we have $w>0$.

\begin{proof}[Proof of Corollary~\ref{cor:touchingballbound}]
Applying \Href{Lemma}{lem:touching_cond_log_concave} to the functions 
$f_1=f^{1/s}$ and $f_2=\shf{\ball{d+1}}$ at $x_0=u$, we obtain
\[
 f^{1/s}(x)\leq w 
 e^{- \iprod{\nabla \left[-\log \shf{\ball{d+1}}\right](u)}{x-u}}
 \;\;\text{ for all } x\in\Red.
\]
Since for any $y\in\inter{\ball{d}}$, we have
\[
\nabla \left[-\log \shf{\ball{d+1}}\right](y)= 
-\frac{1}{2}\nabla \left[\log (1- |y|^2) \right]
= \frac{y}{1 - |y|^2},
\]
inequality \eqref{eq:touchingballbound} follows.
\end{proof}

\subsection{Compactness}

We show that ellipsoids of large $s$-volume contained 
in $\slift{f}$ are contained in a bounded region of $\Redp$. 
We phrase the next lemma in a general functional language, but we will apply it 
mostly for $g=\shf{\ball{2}}$, the height function of the ball $\ball{2}$ (see 
\eqref{eq:heightball}).
 
\begin{lemma}[Compactness]\label{lem:boundedness}
For any  proper log-concave function $f:\Red\to[0,\infty)$ and any $\delta>0$, 
there exist  $\vartheta,\rho,\rho_1, \rho_2>0$ with the following property. If  
for
a  proper even log-concave function 
$g:\R \to [0,\infty)$ with $g(0)=1$ and 
$(A\oplus\alpha,a)\in\ellips$, the function
$\tilde{g} : \R^d \to [0,\infty)$ given by 
\[\tilde{g}(x) = \alpha g\left(\left|A^{-1}(x-a)\right|\right)\]
satisfies $\tilde{g} \leq f$  and 
$ \int_{\Red} \tilde{g}  \geq\delta$, then the following inequalities hold.
\begin{equation}\label{eq:alpha_a_universal_bound}
 \vartheta\leq\alpha \leq \norm{f}
\quad \text{and} \quad 
 |a|  \leq \rho,
\end{equation}
and
\begin{equation}
\label{eq:comparison_operator}
\rho_1 \frac{\left(\int_{\R} g\right)^{d-1}}{ \int_{\Red} {g}(|x|) \di x } \cdot 
I \prec {A} \prec \frac{\rho_2}{ \int_{\R} g } \cdot I.
\end{equation}
\end{lemma}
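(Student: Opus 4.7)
The plan is to establish the four bounds sequentially, where the key geometric input is the well-known exponential decay of proper log-concave functions: since $f$ is proper log-concave, there exist constants $C, c > 0$ depending only on $f$ with $f(x) \leq C e^{-c|x|}$ for all $x \in \Red$.

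First I would prove the upper bound $\alpha \leq \norm{f}$, which is immediate: evaluating at $x = a$ gives $\tilde{g}(a) = \alpha g(0) = \alpha \leq f(a) \leq \norm{f}$. Next, to obtain $\alpha \geq \vartheta$, I would use the hypothesis $\tilde{g} \leq f$ together with the trivial bound $\tilde{g}(x) \leq \alpha g(0) = \alpha$ to get $\tilde{g} \leq \min(\alpha, f)$, so
\[
\delta \leq \int_{\Red}\min(\alpha, f)(x)\di x = \int_0^\alpha \vol{d}\{f > t\}\di t.
\]
Exponential decay yields $\vol{d}\{f > t\} \leq \omega_d (\log(C/t)/c)^d$ for $t < C$, which is integrable near $0$, so the right-hand side tends to $0$ as $\alpha \to 0^+$; this forces $\alpha$ to be at least some $\vartheta = \vartheta(f,\delta) > 0$. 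Once $\alpha \geq \vartheta$ is known, $f(a) \geq \alpha \geq \vartheta$ shows that $a$ lies in the superlevel set $[f \geq \vartheta]$, which is bounded (again by exponential decay), giving $|a| \leq \rho$.

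For the upper bound on $A$, I would use its largest eigenvalue $\lambda_{\max}$ with unit eigenvector $v$, and compare the one-dimensional integrals of $\tilde{g}$ and $f$ along the line $a + tv$. Change of variables gives $\int_{\Re}\tilde{g}(a+tv)\di t = \alpha\lambda_{\max}\int_{\Re} g$. On the other hand $\int_{\Re} f(a+tv)\di t \leq \int_{\Re} C e^{-c(|t|-\rho)}\di t =: M$, a uniform bound because $|a|\leq \rho$. The inequality $\tilde{g} \leq f$ then gives $\alpha\lambda_{\max}\int g \leq M$, whence $\lambda_{\max} \leq M/(\vartheta\int g)$, which is the desired operator bound with $\rho_2 := M/\vartheta$. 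Finally, for the lower bound on $A$, I would integrate $\tilde{g}$ globally: change of variables yields $\int\tilde{g} = \alpha\det(A)\int_{\Red} g(|y|)\di y \geq \delta$, so $\det(A) \geq \delta/(\norm{f}\int_{\Red} g(|y|)\di y)$. Combining with $\det(A) \leq \lambda_{\min}\lambda_{\max}^{d-1}$ and the already-established upper bound on $\lambda_{\max}$ produces the lower bound on $\lambda_{\min}$ with $\rho_1 := \delta/(\norm{f}\rho_2^{d-1})$.

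I expect the main obstacle to be the lower bound on $\alpha$: it requires the careful interplay between the pointwise bound $\tilde{g}\leq \min(\alpha,f)$ and the integrability of the superlevel-set measure against $\di t$ near $t = 0$, which is where the properness (exponential decay) of $f$ is essential. The remaining bounds then follow smoothly in the order $|a| \to \lambda_{\max} \to \lambda_{\min}$, each feeding into the next via the previously established constant.
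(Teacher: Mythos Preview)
Your proposal is correct and follows essentially the same sequence of steps as the paper's proof: the order $\alpha\leq\norm{f}$, then $\alpha\geq\vartheta$, then $|a|\leq\rho$, then the upper bound on $A$ via a line integral in an eigendirection, then the lower bound on $A$ via $\det A$, is exactly what the paper does. The only noticeable difference is in the lower bound on $\alpha$: the paper simply observes that $\int_{\Red}\min\{f,\vartheta\}\to 0$ as $\vartheta\to 0$ because $f$ is integrable (dominated convergence), without invoking exponential decay or the layer-cake formula, which is slightly cleaner than your argument but equivalent in effect.
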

\begin{proof}
Obviously,  $\alpha \leq \norm{f}.$
To bound $\alpha$ from below, we fix $\vartheta$ with
$\alpha \leq \vartheta.$ Then 
$\tilde{g} \leq \vartheta$, and thus,
\[
 \int_{\Red} \tilde{g} \leq
\int_{\Red} \min\{f(x), \vartheta\} \di x.
\]
Since $f$ is a non-negative function of finite integral, the last expression is 
less than $\delta$ if $\vartheta$ is sufficiently small. Thus, the leftmost 
inequality in  \eqref{eq:alpha_a_universal_bound} holds. 
Since $\tilde{g}(a)  = \alpha,$ we conclude that
$a \in [f \geq \vartheta]$ completing the proof of    
\eqref{eq:alpha_a_universal_bound}.

We proceed with inequality~\eqref{eq:comparison_operator}.
Let $\ell$ be the line passing through $a$ in the direction of an eigenvector of 
$A$ corresponding to the eigenvalue $\norm{A}.$ 
We have
\[
 \int_{\ell} f \geq \int_{\ell}\tilde{g}={\alpha}{\norm{A}} \int_{\R} g.
\]
On the other hand, there exists a positive constant $C_f $ such that the 
integral of the proper log-concave function $f$ over any line is
at most $C_f.$ It follows, for example, from the existence of constants
 $\Theta, \nu > 0$ depending only on $f$
such that 
\begin{equation*}
	f(x) \leq \Theta e^{-\nu |x|} 
\end{equation*} 
{ for all } $x\in\Red,$
see \cite[Lemma~2.2.1] 
{brazitikos2014geometry}. Thus, the rightmost relation in 
\eqref{eq:comparison_operator} holds with $\rho_2 = 2\frac{C_f}{\vartheta}$.

By the assumption, we have
\[
  \delta \leq \int_{\R^d} \tilde{g}  = \alpha  \det A  \cdot
 \int_{\R^d} g(|x|) \di x.
\]
Let $\beta$ be the smallest eigenvalue of $A.$ By the previous inequality and 
since $\alpha \in [\vartheta, \norm{f}],$ we have
\[
0 < \frac{\delta}{\norm{f}} \frac{1}{ \int_{\R^d} g(|x|) \di x} \leq 
\det A \leq \beta \norm{A}^{d-1}.
\]
By the rightmost relation in \eqref{eq:comparison_operator}, the existence of 
$\rho_1$ follows.
\end{proof}

\section{Interpolation between ellipsoids}\label{sec:interpolation}

In this section, we show that if two ellipsoids are contained in $\slift{f}$, 
then we can define a third ellipsoid that is also contained in $\slift{f}$, and 
we give a lower bound on its $s$-volume. The latter is a Brunn--Minkowski type 
inequality for the $s$-volume of ellipsoids.

After preliminaries, we present the main results of this section in 
\Href{Subsection}{subsec:keylemmas}, which is followed by immediate 
applications, 
one of which is the proof of the existence and uniqueness of the John 
$s$-ellipsoid (\Href{Theorem}{thm:johnunicity}).

\subsection{Operations on functions: Asplund sum, 
epi-product}\label{sec:asplund}
Following Section~9.5 of \cite{schneider2014convex}, we define the 
\emph{Asplund sum} (or 
\emph{sup-convolution}) of two log-concave 
functions $f_1$ and $f_2$ on $\Red$ by
\[
(\loginfconv{f_1}{f_2})(x) = \sup\limits_{x_1 + x_2 = x} f_1(x_1) f_2(x_2),
\]
and the \emph{epi-product} of a log-concave function $f$ on $\Red$ with a 
scalar $\lambda>0$ by
\[
(\lambda \ast f) (x) = f\!\left(\frac{x}{\lambda}\right)^\lambda.
\]

Clearly, 
\[
\norm{f} = \norm{f_1} \norm{f_2}, \text{ where } f =\loginfconv{f_1}{f_2}.
\]
It is easy to see that for any proper log-concave function $f$ and $\lambda \in 
[0,1],$
we have
\begin{equation}
\label{eq:loginvconv_same_f}
\loginfconv{(\lambda \ast f)}{((1 - \lambda) \ast f)} = f.
\end{equation}

As motivation for the definitions above, we describe a geometric interpretation 
of the Asplund sum: analogy with the Minkowski sum of convex bodies in $\Red.$
Let $\psi_1, \psi_2: \Red \to \R$  be two convex functions
that attain their  minimums. Then the Asplund sum of  $f_1 = 
e^{-\psi_1}$ 
and $f_2 = e^{- \psi_2}$ equals 
\[
\loginfconv{f_1}{f_2} = e^{-\psi},
\]
where $\psi$ is the function defined by taking the Minkowski sum of the 
epigraphs, that is,
\[
\operatorname{epi} \psi = {\operatorname{epi} \psi_1 + 
\operatorname{epi}\psi_2}.
\]

\subsection{A non-linear combination of two ellipsoids}\label{subsec:keylemmas}

The following two lemmas are our key tools. They allow us to interpolate 
between two ellipsoids.

\begin{lemma}[Containment of the interpolated 
ellipsoid]\label{lem:inclusion_ellipsoids}
Fix $s_1, s_2,\beta_1,\beta_2 > 0$ with
$\beta_1  + \beta_2 = 1$.   Let $f_1$ and $f_2$ be two proper log-concave 
functions on $\Red$, and 
$\upthing{E_1}, \upthing{E_2}$ be two \dsymm{} ellipsoids represented 
by $(A_1 \oplus \alpha_1, a_1) \in \ellips$ and $(A_2 \oplus \alpha_2, a_2) \in 
\ellips,$ respectively, such that 
\begin{equation}
\label{eq:contaiment_ass_inclusion}
\upthing{E}_1 \subset \slift[s_1]{f_1} \quad \text{   and   } \quad
\upthing{E_2} \subset \slift[s_2]{f_2}.
\end{equation}
Define
\[
f = \loginfconv{(\beta_1 \ast f_1)}{(\beta_2 \ast f_2)}
\quad
\text{and}
\quad
s =  \beta_1 s_1 +  \beta_2 s_2.
\]
Set
\[
(A \oplus \alpha, a) = 
\left(\left(\beta_1 A_1 + \beta_2 A_2\right) \oplus (\alpha_1^{\beta_1 
s_1}\alpha_2^{\beta_2 s_2})^{1/s}, \beta_1 a_1 + \beta_2 a_2
\right) \quad \text{and} 
\quad
\upthing{E} = (A \oplus \alpha) \ball{d+1} + a.
\]
Then,
\begin{equation}\label{eq:inclusion_ellipsoids}
\upthing{E}  \subset \slift{f}.
\end{equation}
\end{lemma}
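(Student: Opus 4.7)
The plan is to reduce the set-theoretic inclusion \eqref{eq:inclusion_ellipsoids} to a pointwise inequality between height functions using \eqref{eq:height_relation}. It suffices to prove $\shf{\upthing{E}}(x) \leq f(x)^{1/s}$ for every $x \in A\ball{d} + a$, so I fix such a point, write it as $x = Ay + a$ with $|y|\leq 1$, and set $x_i := A_i y + a_i \in A_i\ball{d} + a_i$ for $i=1,2$. Since $A = \beta_1 A_1 + \beta_2 A_2$ and $a = \beta_1 a_1 + \beta_2 a_2$, this gives the key decomposition $x = \beta_1 x_1 + \beta_2 x_2$. The point of sampling both ellipsoids at the \emph{same} interior parameter $y$ of $\ball{d}$ is that the heights synchronize: $\shf{\upthing{E}_i}(x_i) = \alpha_i \sqrt{1-|y|^2}$ and $\shf{\upthing{E}}(x) = \alpha \sqrt{1-|y|^2}$.

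Next I would invoke the assumptions $\upthing{E}_i \subset \slift[s_i]{f_i}$ via \eqref{eq:height_relation}, which yield
\[
\alpha_i^{s_i} (1 - |y|^2)^{s_i/2} \leq f_i(x_i), \quad i=1,2.
\]
Raising the $i$-th inequality to the power $\beta_i$ and multiplying them gives
\[
\alpha_1^{\beta_1 s_1} \alpha_2^{\beta_2 s_2} (1-|y|^2)^{(\beta_1 s_1 + \beta_2 s_2)/2}
\;\leq\; f_1(x_1)^{\beta_1} f_2(x_2)^{\beta_2}.
\]
By the choice of $\alpha$ and $s$ in the statement, the left hand side is precisely $\alpha^s (1-|y|^2)^{s/2} = \shf{\upthing{E}}(x)^s$, so the remaining task is to bound the right hand side below by $f(x)$.

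For this I would use the definitions of the epi-product and the Asplund sum from Subsection~\ref{sec:asplund}. Setting $z_i := \beta_i x_i$, one has $z_1 + z_2 = x$ and $z_i/\beta_i = x_i$, so $z_1, z_2$ form an admissible decomposition in the sup-convolution defining $f$; therefore
\[
f(x) = \bigl((\beta_1 \ast f_1) \star (\beta_2 \ast f_2)\bigr)(x)
\;\geq\; (\beta_1 \ast f_1)(z_1)\cdot(\beta_2 \ast f_2)(z_2) = f_1(x_1)^{\beta_1} f_2(x_2)^{\beta_2}.
\]
Combining this with the previous display yields $\shf{\upthing{E}}(x)^s \leq f(x)$, which is the required pointwise bound. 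The only non-routine step is spotting the synchronous decomposition through a shared $y$; once that is in place, the Asplund sum absorbs what would otherwise have been a Brunn--Minkowski style argument, and no convexity input is needed beyond the mere \emph{definition} of $\star$. Consequently I do not anticipate any genuine obstacle — the whole argument is essentially a change of variables combined with weighted AM/GM bookkeeping.
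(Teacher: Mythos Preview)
Your argument is correct and is essentially identical to the paper's proof: the paper also fixes a point in the new ellipsoid, pulls it back through the common parameter $y=A^{-1}(x-a)$ to get synchronized points $x_i$ in the two original ellipsoids (written there as $x_i=A_iA^{-1}x$ after a shift of origin), uses \eqref{eq:height_relation} on each, and then bounds $f_1^{\beta_1}f_2^{\beta_2}$ by $f$ directly from the definition of the Asplund sum. Your explicit introduction of $y$ and of the decomposition $z_i=\beta_i x_i$ makes the bookkeeping slightly more transparent, but the logic and the key idea are the same.
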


\begin{proof}
Fix $x \in A \ball{d}$ and define
\[
x_1 =  A_1 A^{-1} x, \quad x_2 =  A_2 A^{-1} x. 
\]
Clearly, $x_1 \in A_1 \ball{d}$ and $x_2 \in A_2 \ball{d}.$
Thus, by  \eqref{eq:contaiment_ass_inclusion} and \eqref{eq:height_relation}, 
we have  
\begin{equation}\label{eq:mink_selippses_heights}
f_1^{1/s_1} (x_1 + a_1) \ge  \shf{\upthing{E}_1}(x_1 + a_1) 
\quad \text{and} \quad 
f_2^{1/s_2} (x_2 + a_2) \ge  \shf{\upthing{E}_2}(x_2 + a_2).
\end{equation}
By our definitions, we have that 
$ \beta_1 (x_1 + a_1) + \beta_2(x_2 + a_2)  = x + a.$
Therefore,  by the definition of the Asplund sum,  we  have that
\[
f(x + a)    \ge f_1^{\beta_1}( x_1 + a_1) f_2^{\beta_2}( x_2 + a_2),
 \]
which by \eqref{eq:mink_selippses_heights}, yields
\[
{f(x + a)}  \ge 
  \left(\shf{\upthing{E}_1}(x_1 + a_1))^{ \beta_1 s_1}( \shf{\upthing{E}_2}(x_2 
+ 
a_2)\right)^{\beta_2 s_2}.
\]
By the definition of the height function, and since 
$A^{-1}x=A_1^{-1}x_1=A_2^{-1}x_2$, we have
\begin{gather*}
 \left(\shf{\upthing{E}_1}(x_1 + a_1))^{ \beta_1 s_1}( \shf{\upthing{E}_2}(x_2 
+ 
a_2)\right)^{\beta_2 s_2} = \\
 \left(\alpha_1 \sqrt{1 - \iprod{A_1^{-1} x_1}{ A_1^{-1} x_1}} \right)^{\beta_1 
s_1} \left(\alpha_2 \sqrt{1 - \iprod{A_2^{-1} x_2}{ A_2^{-1} x_2}} 
\right)^{\beta_2 s_2} = \\
  \alpha_1^{\beta_1 s_1} \alpha_2^{\beta_2 s_2}  \left(\sqrt{1 - \iprod{A^{-1} 
x}{A^{-1} x}}\right)^{\beta_1 s_1 + \beta_2 s_2} =  \left( \alpha \sqrt{1 - 
\iprod{A^{-1} x}{A^{-1} x}} \right)^s=
 \left(\shf{\upthing{E}}(x + a)\right)^s.
 \end{gather*}
Combining this with the previous inequality, we obtain inequality 
\eqref{eq:height_relation}.
This completes the proof.
\end{proof}
\begin{lemma}[Volume  of the interpolated 
ellipsoid]\label{lem:minkowski_for_s_ellipsoids} 
Under the conditions of \Href{Lemma}{lem:inclusion_ellipsoids} with $s=s_1 = 
s_2$,
the following inequality holds.
\begin{equation}\label{eq:minkowski_for_s_ellipsoids}
\smeasure{\upthing{E}} \ge \left(\smeasure{\upthing{E}_1}\right)^{\beta_1}
\left(\smeasure{\upthing{E}_2}\right)^{\beta_2},
\end{equation}
with equality if and only if $A_1 = A_2.$ 
\end{lemma}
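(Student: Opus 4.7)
The plan is to reduce the inequality to a direct application of Minkowski's determinant inequality in its multiplicative form \eqref{eq:minkowski_det_multipl_ineq}. The key observation is that the height part of the $s$-volume will cancel exactly, because when $s_1 = s_2 = s$ we have $\alpha^s = \alpha_1^{\beta_1 s}\alpha_2^{\beta_2 s}$ by the very definition of $\alpha$ in \Href{Lemma}{lem:inclusion_ellipsoids}, so $(\alpha^s)^{\beta_1+\beta_2} = (\alpha_1^s)^{\beta_1}(\alpha_2^s)^{\beta_2}$. Thus no work is needed on the height coordinate; all the action is in the $d\times d$ block.

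First I would write out all three $s$-volumes using the formula \eqref{eq:sfunc_of_ellipsoida}:
\[
\smeasure{\upthing{E}_i} = \volbs \alpha_i^s \det A_i \quad (i=1,2), \qquad
\smeasure{\upthing{E}} = \volbs \alpha^s \det(\beta_1 A_1 + \beta_2 A_2).
\]
Using $\beta_1+\beta_2 = 1$, we get $\volbs = \volbs^{\beta_1}\volbs^{\beta_2}$, and by the definition of $\alpha$ we get $\alpha^s = (\alpha_1^s)^{\beta_1}(\alpha_2^s)^{\beta_2}$. Therefore the desired inequality \eqref{eq:minkowski_for_s_ellipsoids} reduces, after dividing through by $\volbs\,\alpha_1^{\beta_1 s}\alpha_2^{\beta_2 s}$, to
\[
\det(\beta_1 A_1 + \beta_2 A_2) \geq (\det A_1)^{\beta_1}(\det A_2)^{\beta_2}.
\]

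Second, since $A_1$ and $A_2$ are positive definite (they come from elements of $\ellips$), the inequality above is exactly \eqref{eq:minkowski_det_multipl_ineq} applied with $\lambda = \beta_1$, $1-\lambda = \beta_2$. The equality condition in \eqref{eq:minkowski_det_multipl_ineq} states $A_1 = A_2$, and this transfers directly to the equality condition in \eqref{eq:minkowski_for_s_ellipsoids}, since all the other factors matched identically.

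There is essentially no obstacle in this proof; the only nontrivial input is the multiplicative Minkowski determinant inequality, which is already recalled in the notation section. The reason this goes through so cleanly is precisely the choice, built into the statement of \Href{Lemma}{lem:inclusion_ellipsoids}, to interpolate the heights $\alpha_i$ geometrically (via $\alpha_1^{\beta_1 s_1}\alpha_2^{\beta_2 s_2}$) rather than linearly, while keeping the matrix interpolation linear; that asymmetry is what allows the heights to combine perfectly and leaves only a matrix inequality, to which Minkowski's determinant inequality is tailor-made.
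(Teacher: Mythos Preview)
Your proof is correct and follows essentially the same approach as the paper: both compute the three $s$-volumes via \eqref{eq:sfunc_of_ellipsoida}, observe that the factors $\volbs$ and $\alpha^s = (\alpha_1^s)^{\beta_1}(\alpha_2^s)^{\beta_2}$ cancel, and reduce the statement (including its equality case) to the multiplicative Minkowski determinant inequality \eqref{eq:minkowski_det_multipl_ineq}.
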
 
\begin{proof}
We set $s=s_1=s_2$, and observe that
by \eqref{eq:sfunc_of_ellipsoida}, inequality
\eqref{eq:minkowski_for_s_ellipsoids} is equivalent to
\[
\volbs (\alpha_1^{\beta_1} \alpha_2^{\beta_2})^s \cdot \det{\left(\beta_1 A_1 + 
\beta_2 A_2 \right)} \ge
\volbs (\alpha_1^{\beta_1} \alpha_2^{\beta_2})^s \cdot \left(\det A_1 
\right)^{\beta_1} \left(\det A_2\right)^{\beta_2},
\]
which holds if and only if
\[
 \det{\left(\beta_1 A_1 + \beta_2 A_2 \right)} \ge
\left(\det A_1 \right)^{\beta_1} \left(\det A_2\right)^{\beta_2}.
\]
Finally, \eqref{eq:minkowski_for_s_ellipsoids} and its equality condition 
follow from Minkowski's determinant inequality 
\eqref{eq:minkowski_det_multipl_ineq} and the equality condition 
therein.
\end{proof}

\subsection{Uniqueness of the John \texorpdfstring{$s$}{s}-ellipsoids}
We start with a simple but useful observation.

\begin{lemma}[Interpolation between translated ellipsoids]
\label{lem:interpolation_translates}
Let $f$ be a proper  log-concave function on $\Red$, and $s>0$.
Assume that the two \dsymm{} ellipsoids $\upthing{E}_1$ and $\upthing{E}_2$ 
contained in $\slift{f}$ are translates of each other by a vector in $\Red$. 
More specifically, assume that they are represented by 
$(A \oplus \alpha, {a_1})$ and 
$(A \oplus \alpha, {a_2})$ with $a_1 = - a_2 = \delta A e_1,$
where $e_1$ is the first standard basis vector in $\Red$, and $\delta>0$.
Then the origin centered ellipsoid 
\[
\upthing{E}_0 =  (A \oplus \alpha)  \upthing{M} \ball{d+1}, \quad 
\text{where} \;\; 
\upthing{M} = \diag(1+\delta, 1, \dots, 1),
\]
is contained in $\slift{f}$.
\end{lemma}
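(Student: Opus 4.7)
The plan is to exploit Lemma~\ref{lem:inclusion_ellipsoids} to produce a one-parameter family of translates of the common-shape ellipsoid lying inside $\slift{f}$, and then to check a purely geometric inclusion that $\upthing{E}_0$ is covered by that family.

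First, I would apply Lemma~\ref{lem:inclusion_ellipsoids} with $f_1 = f_2 = f$, $s_1 = s_2 = s$, and weights $\beta_1 = \lambda$, $\beta_2 = 1-\lambda$ for an arbitrary $\lambda \in [0,1]$. The identity \eqref{eq:loginvconv_same_f} gives $\loginfconv{(\lambda \ast f)}{((1-\lambda) \ast f)} = f$, so the hypothesis on $f$ is met. Since $A_1 = A_2 = A$ and $\alpha_1 = \alpha_2 = \alpha$, the interpolation formula collapses and produces
\[
\upthing{E}(\lambda) := (A\oplus\alpha)\ball{d+1} + (2\lambda-1)\delta A e_1,
\]
and Lemma~\ref{lem:inclusion_ellipsoids} guarantees $\upthing{E}(\lambda) \subset \slift{f}$ for every $\lambda \in [0,1]$.

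Next, I would establish the elementary geometric inclusion $\upthing{E}_0 \subset \bigcup_{\lambda \in [0,1]} \upthing{E}(\lambda)$. Writing $\upthing{M} = M_d \oplus 1$ with $M_d = \diag(1+\delta,1,\ldots,1) \in \Re^{d\times d}$, we have $\upthing{E}_0 = (AM_d \oplus \alpha)\ball{d+1}$, so a point $(x,\xi)$ belongs to $\upthing{E}_0$ precisely when $v := A^{-1}x$, decomposed as $v = v_1 e_1 + \tilde{v}$ with $\tilde{v}$ orthogonal to $e_1$, satisfies
\[
\frac{v_1^2}{(1+\delta)^2} + |\tilde{v}|^2 + \frac{\xi^2}{\alpha^2} \le 1.
\]
For such a point, I select $\lambda = \tfrac{1}{2} + \tfrac{v_1}{2(1+\delta)}$; the bound $|v_1| \le 1+\delta$ from the above inequality places $\lambda \in [0,1]$, and it yields $(2\lambda-1)\delta = v_1 \delta/(1+\delta)$, hence $v_1 - (2\lambda-1)\delta = v_1/(1+\delta)$. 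A direct substitution then shows
\[
|v - (2\lambda-1)\delta e_1|^2 + \frac{\xi^2}{\alpha^2} = \frac{v_1^2}{(1+\delta)^2} + |\tilde{v}|^2 + \frac{\xi^2}{\alpha^2} \le 1,
\]
which is exactly the membership condition $(x,\xi) \in \upthing{E}(\lambda)$.

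Chaining these two steps yields $\upthing{E}_0 \subset \bigcup_{\lambda \in [0,1]} \upthing{E}(\lambda) \subset \slift{f}$. The only substantive ingredient is the choice $\lambda = \tfrac{1}{2} + \tfrac{v_1}{2(1+\delta)}$ tuned so that translating the unit ball by $(2\lambda-1)\delta e_1$ produces the same first-coordinate squared term $v_1^2/(1+\delta)^2$ as the stretched ellipsoid $M_d\ball{d}$; once this is spotted there is no real obstacle, as the rest is a one-line calculation and a routine application of the interpolation lemma.
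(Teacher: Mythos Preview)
Your proof is correct and follows essentially the same route as the paper: both arguments show that the ``sausage'' $(A\oplus\alpha)\ball{d+1}+[-\delta Ae_1,\delta Ae_1]$ lies in $\slift{f}$ and that $\upthing{E}_0$ sits inside it. The only minor difference is that the paper obtains the sausage inclusion directly from the convexity of the superlevel sets of $f^{1/s}$ (if $\upthing{H}\subset\slift{f}$ and $\upthing{H}+v\subset\slift{f}$ for $v\in\Red$, then $\upthing{H}+[0,v]\subset\slift{f}$), whereas you invoke \Href{Lemma}{lem:inclusion_ellipsoids} to place each intermediate translate in $\slift{f}$; and the paper dismisses the inclusion $\upthing{E}_0\subset\upthing{W}$ as ``clear'' while you verify it by an explicit choice of $\lambda$.
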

\begin{proof}
Since all super-level sets of $f^{1/s}$ are convex sets in $\Red$, it is easy 
to see that for any convex set $\upthing{H}$ in $\Redp$ and vector $v\in\Red$, 
if 
$\upthing{H}\subseteq\slift{f}$ and $\upthing{H}+v\subseteq\slift{f}$, then 
$ \conv{\upthing{H}\cup(\upthing{H}+v)}=
\upthing{H}+[0,v]\subseteq\slift{f}$.

Thus, $\slift{f}$ contains the ``sausage-like'' body
 \[
 \upthing{W} = 
 \conv{\upthing{E}_1\cup\upthing{E}_2}=
 (A \oplus \alpha) \left(\ball{d+1} + 
 [ A^{-1} a_2, A^{-1} a_1 ] \right)= (A \oplus \alpha) \left(\ball{d+1} 
+ 
 [ -\delta e_1, \delta e_1] \right).
\]
On the other hand, clearly, $\upthing{E}_0 \subseteq \upthing{W}$ completing 
the proof of \Href{Lemma}{lem:interpolation_translates}.
\end{proof}

As an application of Lemmas~\ref{lem:inclusion_ellipsoids} and 
\ref{lem:minkowski_for_s_ellipsoids}, we show that  in the set of  \dsymm\ 
ellipsoids  in $\slift{f}$ with a fixed height, a  largest $s$-volume \dsymm\  
ellipsoid is unique.
 
\begin{lemma}[Uniqueness for a fixed height]
\label{lem:uniquness_s_ellips_fixed_height}
Let $f$ be a proper  log-concave function on $\Red$, and $s>0$. Then, among all 
\dsymm{} ellipsoids of height $\alpha, 0 < \alpha < \norm{f}^{1/s},$ in 
$\slift{f}$, there is a unique one of maximal $s$-volume. Additionally, if 
there 
is a  \dsymm{} ellipsoid in $\slift{f}$ of height $\norm{f}^{1/s},$ then among 
all \dsymm{} ellipsoids of height $\alpha=\norm{f}^{1/s}$ in $\slift{f}$, there 
is a unique one of maximal $s$-volume.\end{lemma}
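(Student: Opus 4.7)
The plan is to combine \Href{Lemma}{lem:boundedness} (compactness) with the two interpolation tools of \Href{Section}{sec:interpolation}. For \emph{existence}, I take a maximizing sequence $\upthing{E}_n=(A_n\oplus\alpha)\ball{d+1}+a_n$ of $d$-symmetric ellipsoids of height $\alpha$ in $\slift{f}$. Since all of them share the same height, their $s$-volumes are proportional to $\det A_n$, so applying \Href{Lemma}{lem:boundedness} with $g=\shf{\ball{2}}$ (so that $\tilde g$ is the restriction to $\Red$ of the height function of $\upthing{E}_n$) yields uniform bounds on $A_n$ and $a_n$. Extracting a convergent subsequence and using upper semi-continuity of $f$ (equivalently, closedness of $\slift{f}$), the limit is a maximizer.

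For \emph{uniqueness}, let $\upthing{E}_i=(A_i\oplus\alpha)\ball{d+1}+a_i$, $i=1,2$, be two such maximizers. In \textbf{Step 1 (same matrix)}, I apply \Href{Lemma}{lem:inclusion_ellipsoids} with $s_1=s_2=s$, $f_1=f_2=f$, $\beta_1=\beta_2=1/2$; by \eqref{eq:loginvconv_same_f}, the interpolated ellipsoid $\upthing{E}$ with parameters $\bigl(\tfrac{A_1+A_2}{2}\oplus\alpha,\tfrac{a_1+a_2}{2}\bigr)$ is contained in $\slift{f}$ and still has height $\alpha$. Then \Href{Lemma}{lem:minkowski_for_s_ellipsoids} gives
\[
\smeasure{\upthing{E}}\;\geq\;\sqrt{\smeasure{\upthing{E}_1}\,\smeasure{\upthing{E}_2}}\;=\;\smeasure{\upthing{E}_1},
\]
with equality iff $A_1=A_2$, so maximality forces $A_1=A_2=:A$.

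In \textbf{Step 2 (same center)}, I rule out $a_1\ne a_2$. After translating $f$ (permissible by the translation property noted following \eqref{eq:scaleellips}), I may assume $a_2=-a_1\ne 0$, and write $a_1=\delta A e$ for a unit vector $e\in\Red$ and some $\delta>0$. The proof of \Href{Lemma}{lem:interpolation_translates} is direction-agnostic in $\Red$: replacing $e_1$ by $e$ and $\diag(1+\delta,1,\ldots,1)$ by $I_d+\delta ee^{\intercal}$, the same ``sausage'' argument produces the origin-centered $d$-symmetric ellipsoid with matrix $A(I_d+\delta ee^{\intercal})\oplus\alpha$ contained in $\slift{f}$ and of height $\alpha$. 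By \eqref{eq:sfunc_of_ellipsoida}, its $s$-volume equals $(1+\delta)\smeasure{\upthing{E}_1}>\smeasure{\upthing{E}_1}$, contradicting maximality. Hence $a_1=a_2$.

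The main obstacle is that \Href{Lemma}{lem:minkowski_for_s_ellipsoids} alone cannot distinguish between ellipsoids sharing $A$ but differing in center---its geometric-mean bound is already tight once $A_1=A_2$---so Step 2 genuinely requires the separate translate-interpolation of \Href{Lemma}{lem:interpolation_translates}, and the only new verification needed is the easy observation that its short convexity-in-horizontal-slices proof carries over to any direction in $\Red$ without modification.
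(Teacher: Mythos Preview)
Your proof is correct and follows essentially the same route as the paper's: existence via \Href{Lemma}{lem:boundedness} and compactness, then $A_1=A_2$ via Lemmas~\ref{lem:inclusion_ellipsoids} and~\ref{lem:minkowski_for_s_ellipsoids}, then $a_1=a_2$ via \Href{Lemma}{lem:interpolation_translates}. The only cosmetic differences are that the paper applies \Href{Lemma}{lem:boundedness} with $g=\shf{\ball{2}}^s$ (so that $\tilde g\leq f$ holds directly, whereas your choice $g=\shf{\ball{2}}$ gives $\tilde g\leq f^{1/s}$ and thus implicitly requires applying the lemma to $f^{1/s}$), and in Step~2 the paper additionally \emph{rotates} $\Red$ to send your $e$ to $e_1$, so that \Href{Lemma}{lem:interpolation_translates} can be quoted verbatim rather than via your equally valid direction-agnostic reformulation.
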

\begin{proof}
Clearly, the maximum $s$-volume among \dsymm{} ellipsoids of height $\alpha$ 
contained in $\slift{f}$ is positive. By \Href{Lemma}{lem:boundedness} applied 
with $g= \shf{\ball{2}}^s$, where $\shf{\ball{2}}$ is the height function of 
$\ball{2}$ (see 
\eqref{eq:heightball}), identity \eqref{eq:sfunc_of_ellipsoida} and a standard 
compactness argument, this maximum is attained.

We show that such an ellipsoid is unique.
Assume that 
 $\upthing{E}_1 \subset \slift{f}$ and 
$\upthing{E}_2  \subset \slift{f}$, represented by
 $(A_1 \oplus \alpha, a_1) \in \ellips$ and $(A_2 \oplus \alpha, a_2) \in 
\ellips$,
are two $d$-symmetric ellipsoids of the maximal $s$-volume.

Define a new $d$-symmetric ellipsoid $\upthing{E}$ represented by
\[
\left(\frac{A_1+A_2}{2} \oplus \alpha, \frac{a_1+a_2}{2} \right) \in \ellips.
\]

Applying \eqref{eq:loginvconv_same_f} with $\lambda=1/2$ and 
\Href{Lemma}{lem:inclusion_ellipsoids}, we have $\upthing{E} \subset \slift{f}$.
Next, by the choice of the ellipsoids, we have that 
\[
\smeasure{\upthing{E}} \leq 
\smeasure{\upthing{E}_1} =
\sqrt{\smeasure{\upthing{E}_1}
\smeasure{\upthing{E}_2}} =\smeasure{\upthing{E}_2} .
\]
By Lemma 4.2,  we have that $\smeasure{\upthing{E}} \geq
\sqrt{\smeasure{\upthing{E}_1}
\smeasure{\upthing{E}_2}}$, therefore equality holds.
Thus, by the equality condition in \Href{Lemma}{lem:minkowski_for_s_ellipsoids}, 
 we 
conclude that 
$A_1 = A_2$.

To complete the proof, we need to show that $a_1 = a_2$.
Assume the contrary: $a_1 \neq a_2$.
By translating the origin and rotating the space $\Red$, we may assume that 
$a_1  = - 
a_2 \neq 0$ and that $A_1^{-1} a_1 =\delta e_1$ for some $\delta>0$.

{By \Href{Lemma}{lem:interpolation_translates}, the ellipsoid 
$
\upthing{E}_0 =  (A_1 \oplus \alpha)  \upthing{M} \ball{d+1}
$
is contained in $\slift{f}$, where $\upthing{M} = \diag(1+\delta, 1, \dots, 1)$.
However, $\smeasure{\upthing{E}_0} > \smeasure{\upthing{E}} = \smeasure{\upthing{E}}_1$, which
contradicts the choice of  $\upthing{E}_1$ and $\upthing{E}_2$, completing 
the proof of \Href{Lemma}{lem:uniquness_s_ellips_fixed_height}.
}
\end{proof}

\begin{thm}[Existence and uniqueness of the \jsellipsoid]\label{thm:johnunicity}
Let $s > 0$ and $f$ be a proper log-concave function on $\Red.$
Then, there exists a unique \jsellipsoid{} of $f$. 
\end{thm}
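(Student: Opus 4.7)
The plan is to combine the compactness result \Href{Lemma}{lem:boundedness} for existence with the interpolation tools \Href{Lemma}{lem:inclusion_ellipsoids} and \Href{Lemma}{lem:minkowski_for_s_ellipsoids} together with the fixed-height statement \Href{Lemma}{lem:uniquness_s_ellips_fixed_height} for uniqueness; essentially everything needed is already in place.

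For existence, first note that $z(f,s)$ is finite, since any \dsymm{} ellipsoid in $\slift{f}$ has $s$-volume at most $\smeasure{\slift{f}}=\int_{\Red} f$, and positive, since $f$ has positive integral so a small \dsymm{} cylinder, hence a small \dsymm{} ellipsoid, fits inside $\slift{f}$. Take a maximizing sequence $\upthing{E}_n=(A_n\oplus\alpha_n)\ball{d+1}+a_n\subset\slift{f}$ with $\smeasure{\upthing{E}_n}\to z(f,s)$. Apply \Href{Lemma}{lem:boundedness} with $g=\shf{\ball{2}}^s$, an even proper log-concave function with $g(0)=1$, and $\delta=z(f,s)/2$. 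Under this choice the associated $\tilde g$ of the lemma equals $\shf{\upthing{E}_n}^s$; the condition $\tilde g\leq f$ is exactly $\upthing{E}_n\subset\slift{f}$ by \eqref{eq:height_relation}, and $\int\tilde g=\smeasure{\upthing{E}_n}$ by \eqref{eq:sfunc_of_ellipsoida}. The conclusion of the lemma yields uniform bounds $\vartheta\leq\alpha_n^s\leq\norm{f}$, $|a_n|\leq\rho$, and $\rho_1 I\prec A_n\prec\rho_2 I$, so along a subsequence $(A_n\oplus\alpha_n,a_n)\to(A\oplus\alpha,a)\in\ellips$. Since $f$ is upper semi-continuous, $\slift{f}$ is closed, so the limiting ellipsoid lies in $\slift{f}$; continuity of \eqref{eq:sfunc_of_ellipsoida} in the parameters shows it attains $z(f,s)$.

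For uniqueness, suppose two ellipsoids $\upthing{E}_1,\upthing{E}_2$, represented by $(A_i\oplus\alpha_i,a_i)$, both attain $z(f,s)$. Take $s_1=s_2=s$, $\beta_1=\beta_2=1/2$, and $f_1=f_2=f$; by \eqref{eq:loginvconv_same_f} the Asplund sum appearing in \Href{Lemma}{lem:inclusion_ellipsoids} is $f$ itself, so the interpolated ellipsoid
\[
\upthing{E}=\left(\tfrac{A_1+A_2}{2}\oplus\sqrt{\alpha_1\alpha_2}\right)\ball{d+1}+\tfrac{a_1+a_2}{2}
\]
is contained in $\slift{f}$. By \Href{Lemma}{lem:minkowski_for_s_ellipsoids}, $\smeasure{\upthing{E}}\geq\sqrt{\smeasure{\upthing{E}_1}\smeasure{\upthing{E}_2}}=z(f,s)$; maximality forces equality, and the equality case of \eqref{eq:minkowski_det_multipl_ineq} then gives $A_1=A_2=:A$. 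Since $\smeasure{\upthing{E}_i}=\volbs\alpha_i^s\det A$ is the same for both indices, $\alpha_1=\alpha_2$. Finally, \Href{Lemma}{lem:uniquness_s_ellips_fixed_height} at this common height yields $a_1=a_2$.

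There is no real obstacle: the proof is bookkeeping. The only slightly delicate point is in the existence step, matching the ellipsoid parameterization to the hypothesis of \Href{Lemma}{lem:boundedness}; the choice $g=\shf{\ball{2}}^s$ makes the $\alpha$ of the lemma play the role of $\alpha^s$ for the ellipsoid, which is enough to convert the lemma's bounds into the needed two-sided bounds on $\alpha_n$, $a_n$, and $A_n$. The uniqueness argument then cascades cleanly: interpolation pins down the matrix, equality of $s$-volumes pins down the height, and fixed-height uniqueness pins down the translate.
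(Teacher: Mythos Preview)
Your proof is correct and follows essentially the same route as the paper: existence via \Href{Lemma}{lem:boundedness} with $g=\shf{\ball{2}}^s$ and compactness, uniqueness by interpolating via \Href{Lemma}{lem:inclusion_ellipsoids}, applying the equality case of \Href{Lemma}{lem:minkowski_for_s_ellipsoids} to get $A_1=A_2$, equating $s$-volumes to get $\alpha_1=\alpha_2$, and invoking \Href{Lemma}{lem:uniquness_s_ellips_fixed_height} for the centers. You spell out the compactness step (closedness of $\slift{f}$, continuity of \eqref{eq:sfunc_of_ellipsoida}) and the parameter matching in \Href{Lemma}{lem:boundedness} more explicitly than the paper does, but the argument is the same.
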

\begin{proof}[Proof of \Href{Theorem}{thm:johnunicity}]
As in the proof of \Href{Lemma}{lem:uniquness_s_ellips_fixed_height}, the 
existence of an $s$-ellipsoid follows from 
\Href{Lemma}{lem:boundedness} applied 
with $g=\shf{\ball{2}}^s$, where $\shf{\ball{2}}$ is the height function of 
$\ball{2}$ (see 
\eqref{eq:heightball}),
identity \eqref{eq:sfunc_of_ellipsoida} and a standard compactness argument. 


Assume that 
$\upthing{E}_1 \subset \slift{f}$ and 
$\upthing{E}_2  \subset \slift{f}$ are
two $d$-symmetric ellipsoids of maximal $s$-volume,
represented by
$(A_1 \oplus \alpha_1, a_1) \in \ellips$ and 
$(A_2 \oplus \alpha_2, a_2) \in \ellips$, respectively. 
We define a new $d$-symmetric ellipsoid $\upthing{E}$ represented by
\[
\left(\frac{A_1+A_2}{2} \oplus \sqrt{\alpha_1 \alpha_2}, \frac{a_1+a_2}{2} 
\right) \in \ellips.
\]

Applying \eqref{eq:loginvconv_same_f} with $\lambda=1/2$ and 
\Href{Lemma}{lem:inclusion_ellipsoids}, we have $\upthing{E} \subset \slift{f}$.
Next, by the choice of the ellipsoids, we also have
\[
\smeasure{\upthing{E}} \leq 
\smeasure{\upthing{E}_1} =
\sqrt{\smeasure{\upthing{E}_1}
\smeasure{\upthing{E}_2}} =\smeasure{\upthing{E}_2},
\]
which, combined with 
\Href{Lemma}{lem:minkowski_for_s_ellipsoids}, yields
$
\smeasure{\upthing{E}} = 
\smeasure{\upthing{E}_1} =
\smeasure{\upthing{E}_2}
$
and $A_1 = A_2$.
This implies that $\alpha_1 = \alpha_2,$ since  the $s$-volume of 
$\upthing{E}_1$ and $\upthing{E}_2$ are equal.
Therefore, by \Href{Lemma}{lem:uniquness_s_ellips_fixed_height}, the ellipsoids 
$\upthing{E}_1$ and $\upthing{E}_2$ coincide, completing the proof of 
\Href{Theorem}{thm:johnunicity}.
\end{proof}

\subsection{Bound on the height}
Recall from \Href{Section}{sec:sellipsoid} that $\selldense{f}$ denotes the 
density of the \jsfunction{} of $f$, that is, the density of the 
$s$-marginal of the \jsellipsoid{} of $f$.
The following result is an extension of the analogous result on the ``height'' 
of \bernardoell{}  
\cite[Theorem 1.1]{alonso2018john}  to the John $s$-ellipsoid with a 
similar proof.
\begin{lemma}\label{lem:height}
Let $f$ be a proper  log-concave function on $\Red$ and $s > 0$. 
Then, 
\begin{equation}\label{eq:height}
 \norm{\selldense{f}} \geq e^{-d} \norm{f}. 
\end{equation}
\end{lemma}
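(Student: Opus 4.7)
The plan is to use a log-concavity interpolation very much in the spirit of \Href{Lemma}{lem:inclusion_ellipsoids}. Write the John $s$-ellipsoid as $\sellbody{f}=(A_*\oplus\alpha_*)\ball{d+1}+a_*$ with $(A_*\oplus\alpha_*,a_*)\in\ellips$; since $\shf{\sellbody{f}}$ attains its maximum $\alpha_*$ at $a_*$, we have $\norm{\selldense{f}}=\alpha_*^s$, so the task reduces to showing $\alpha_*^s\geq e^{-d}\norm{f}$. Pick any $x_0\in\Red$ with $f(x_0)>0$; such $x_0$ exist with $f(x_0)$ arbitrarily close to $\norm{f}$ by the definition of supremum.

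For each $t\in(0,1)$ the key construction is the \dsymm{} ellipsoid
\[
\upthing{E}_t=\left(tA_*\oplus\alpha_*^t f(x_0)^{(1-t)/s}\right)\ball{d+1}+(ta_*+(1-t)x_0),
\]
whose base is a $t$-shrinking of $A_*\ball{d}$, whose height is the weighted geometric mean of $\alpha_*$ and $f^{1/s}(x_0)$ with weights $t$ and $1-t$, and whose center is the corresponding convex combination. We first verify $\upthing{E}_t\subset\slift{f}$. For $y\in\ball{d}$, set $z=t(A_*y+a_*)+(1-t)x_0$; log-concavity of $f$ gives $f(z)\geq f(A_*y+a_*)^t f(x_0)^{1-t}$, the containment $\sellbody{f}\subset\slift{f}$ yields $f^{1/s}(A_*y+a_*)\geq \alpha_*\sqrt{1-|y|^2}$, and combining these with the elementary inequality $(1-|y|^2)^{t/2}\geq(1-|y|^2)^{1/2}$ (valid since $t\leq 1$ and $1-|y|^2\leq 1$) gives exactly the pointwise bound \eqref{eq:height_relation} certifying $\upthing{E}_t\subset\slift{f}$.

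Having this containment, we apply the maximality of $\sellbody{f}$ together with formula \eqref{eq:sfunc_of_ellipsoida}: the inequality $\smeasure{\upthing{E}_t}\leq\smeasure{\sellbody{f}}$ becomes $t^d\alpha_*^{ts} f(x_0)^{1-t}\leq\alpha_*^s$, and raising to the power $1/(1-t)$ produces $\alpha_*^s\geq f(x_0)\,t^{d/(1-t)}$. Finally, letting $t\to 1^-$ and using the standard limit $\lim_{t\to 1^-}t^{d/(1-t)}=e^{-d}$ we conclude $\alpha_*^s\geq e^{-d}f(x_0)$, and taking the supremum over $x_0$ yields the desired inequality. The only point requiring real care is aligning the geometric-mean height $\alpha_*^t f(x_0)^{(1-t)/s}$ of $\upthing{E}_t$ with what the log-concavity estimate delivers, so that the powers of $1-|y|^2$ match up; once that is set up, everything else is routine.
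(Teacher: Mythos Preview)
Your proof is correct. Both yours and the paper's rest on the same interpolation principle, but the execution differs. The paper defines $\Psi(\alpha)=\det A_\alpha$ for the maximal-$s$-volume ellipsoid at each height $\alpha$, proves via \Href{Lemma}{lem:inclusion_ellipsoids} and Minkowski's determinant inequality that $\Phi(t)=\Psi(e^t)^{1/d}$ is concave on $(-\infty,\tfrac{\log\norm{f}}{s})$, and then uses a tangent-line comparison between the concave $\Phi$ and the convex majorant $\Phi(t_0)e^{(s/d)(t_0-t)}$ to force $t_0\geq -d/s+\tfrac{\log\norm{f}}{s}$. Your argument bypasses the auxiliary function $\Psi$ entirely: you interpolate the John $s$-ellipsoid directly against the degenerate ``ellipsoid'' concentrated at a near-maximum point $x_0$, and the single-parameter family $\upthing{E}_t$ together with the limit $t^{d/(1-t)}\to e^{-d}$ does the work of the paper's concavity-plus-tangent-line machinery in one stroke. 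Your route is shorter and needs neither Minkowski's determinant inequality nor the uniqueness result of \Href{Lemma}{lem:uniquness_s_ellips_fixed_height}; the paper's route, on the other hand, establishes the stronger intermediate fact that $\Phi$ is concave, which records additional structural information about how the optimal ellipsoids vary with height.
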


We note that if the \jsellipsoid{} of $f$ is represented by $\left(A_s 
\oplus \alpha_s, a_s \right)$ (that is, its height is $\alpha_s$), then 
$\norm{\selldense{f}}=\alpha_s^{s}$.

\begin{proof}[Proof of Lemma~\ref{lem:height}]
We define a function $\Psi: (0, \norm{f}^{1/s}) \to \R^+$ as follows.
By \Href{Lemma}{lem:uniquness_s_ellips_fixed_height}, for any $\alpha\in (0, 
\norm{f}^{1/s})$, there is a unique
\dsymm{} ellipsoid of maximal $s$-volume among \dsymm{} ellipsoids
of height $\alpha$ in $\slift{f}$. Let this ellipsoid be represented by 
$\left(A_{\alpha} \oplus \alpha, a_\alpha\right)\in\ellips$. We set 
$\Psi(\alpha) = \det 
A_{\alpha}$.
\begin{claim}\label{claim:psilogbr}
For any $\alpha_1, \alpha_2 \in (0, \norm{f}^{1/s})$  and $\lambda \in [0,1],$ 
we have
\begin{equation}
\label{eq:ellips-d_quasi_log-concavity}
\Psi \!\left(\alpha_1^\lambda \alpha_2^{1-\lambda} \right)^{1/d} \ge \lambda 
\Psi(\alpha_1)^{1/d} + (1 - \lambda) \Psi(\alpha_2)^{1/d}.
\end{equation}
\end{claim}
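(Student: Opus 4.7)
The plan is to apply the interpolation machinery of Section~\ref{sec:interpolation} (specifically \Href{Lemma}{lem:inclusion_ellipsoids}) with $f_1=f_2=f$ and $s_1=s_2=s$, combined with the classical Minkowski determinant inequality \eqref{eq:minkowski_det_ineq}. The key observation is that interpolation with the parameters $\beta_1=\lambda$, $\beta_2=1-\lambda$ produces an ellipsoid of \emph{exactly} the height $\alpha_1^\lambda \alpha_2^{1-\lambda}$, which is what the statement requires.

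In more detail, for $i=1,2$, let $\upthing{E}_i=(A_{\alpha_i}\oplus\alpha_i,a_{\alpha_i})\in\ellips$ denote the maximal $s$-volume \dsymm{} ellipsoid of height $\alpha_i$ contained in $\slift{f}$, so that $\Psi(\alpha_i)=\det A_{\alpha_i}$. I would apply \Href{Lemma}{lem:inclusion_ellipsoids} to these two ellipsoids with $f_1=f_2=f$, $s_1=s_2=s$, $\beta_1=\lambda$, $\beta_2=1-\lambda$; by identity \eqref{eq:loginvconv_same_f} the resulting function is again $f$, and the resulting parameter is $s$. The lemma then produces an ellipsoid $\upthing{E}\subset\slift{f}$ represented by
\[
\bigl(\lambda A_{\alpha_1}+(1-\lambda)A_{\alpha_2}\bigr)\oplus\bigl(\alpha_1^{\lambda s}\alpha_2^{(1-\lambda)s}\bigr)^{1/s},\ \lambda a_{\alpha_1}+(1-\lambda)a_{\alpha_2}.
\]
Its height is exactly $\alpha_1^\lambda\alpha_2^{1-\lambda}$.

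By the defining extremal property of $\Psi$ at height $\alpha_1^\lambda\alpha_2^{1-\lambda}$, we get
\[
\Psi\!\left(\alpha_1^\lambda\alpha_2^{1-\lambda}\right)\geq \det\!\bigl(\lambda A_{\alpha_1}+(1-\lambda)A_{\alpha_2}\bigr),
\]
and then Minkowski's determinant inequality \eqref{eq:minkowski_det_ineq} applied to the positive definite matrices $A_{\alpha_1}$ and $A_{\alpha_2}$ yields
\[
\det\!\bigl(\lambda A_{\alpha_1}+(1-\lambda)A_{\alpha_2}\bigr)^{1/d}\geq \lambda(\det A_{\alpha_1})^{1/d}+(1-\lambda)(\det A_{\alpha_2})^{1/d}=\lambda\Psi(\alpha_1)^{1/d}+(1-\lambda)\Psi(\alpha_2)^{1/d}.
\]
Chaining the two inequalities gives \eqref{eq:ellips-d_quasi_log-concavity}.

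There is essentially no obstacle here: the real content lives in \Href{Lemma}{lem:inclusion_ellipsoids}, which already handled the nonlinear (geometric mean) interpolation of heights. The only point to double-check is that the interpolation formula for the height term $(\alpha_1^{\beta_1 s_1}\alpha_2^{\beta_2 s_2})^{1/s}$ collapses to $\alpha_1^\lambda \alpha_2^{1-\lambda}$ when $s_1=s_2=s$, which it does. Once that is noted, the inequality reduces to a direct invocation of the extremality of $A_{\alpha}$ followed by Minkowski's inequality.
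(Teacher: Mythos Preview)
Your proof is correct and follows essentially the same approach as the paper: apply \Href{Lemma}{lem:inclusion_ellipsoids} (with $f_1=f_2=f$, $s_1=s_2=s$, using \eqref{eq:loginvconv_same_f}) to get an interpolated ellipsoid in $\slift{f}$ of height $\alpha_1^\lambda\alpha_2^{1-\lambda}$, then invoke the definition of $\Psi$ and Minkowski's determinant inequality \eqref{eq:minkowski_det_ineq}. The paper's proof is the same, just stated more tersely.
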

\begin{proof}[Proof of Claim~\ref{claim:psilogbr}]
Let $\left(A_1 \oplus \alpha_1, a_1 \right)$ and  
$\left(A_2 \oplus \alpha_2, a_2 \right)$ represent 
the \dsymm{} ellipsoids of maximum $s$-volume contained in $\slift{f}$ with the 
corresponding heights.
By \Href{Lemma}{lem:inclusion_ellipsoids} and \eqref{eq:sfunc_of_ellipsoida}, 
we have that
\[
\Psi\! \left(\alpha_1^\lambda \alpha_2^{1-\lambda}\right) \geq \det 
\left(\lambda A_1 + (1-\lambda)A_2 \right).
\]
Now, \eqref{eq:ellips-d_quasi_log-concavity} follows immediately from
Minkowski's determinant inequality \eqref{eq:minkowski_det_ineq}.
\end{proof}
Set $\Phi(t) = \Psi\! \left(e^{t} \right)^{1/d}$ for all $t \in 
\left(-\infty,\frac{\log \norm{f}}{s}\right).$
Inequality \eqref{eq:ellips-d_quasi_log-concavity} implies that $\Phi$ is a 
concave function on its domain.

Let $\alpha_0$ be the height of the \jsellipsoid{} of $f.$ Then, by 
\eqref{eq:sfunc_of_ellipsoida}, for any 
$\alpha$ in the domain of $\Psi,$ we have that 
\[
\Psi(
\alpha) \alpha^s \leq \Psi(\alpha_0) \alpha_0^s.
\] 
Setting $t_0 =  \log \alpha_0$ and taking root of order $d,$ we obtain
\[
\Phi(t) \leq \Phi(t_0) e^{\frac{s}{d}(t_0 - t)}
\]
for any $t$ in the domain of $\Phi.$ The expression on the right-hand side is a 
convex function of $t$, while $\Phi$ is a concave 
function. Since these functions take the same value at $t = t_0,$ we conclude 
that the graph of $\Phi$ lies below the tangent line to graph of $\Phi(t_0) 
e^{\frac{s}{d}(t_0 - t)}$ at point $t_0.$ That is, 
\[
\Phi(t) \leq \Phi(t_0) \left(1 - \frac{s}{d}(t - t_0)\right).
\]
Passing to the limit as $t \to  \frac{\log \norm{f}}{s}$  and since the values 
of $\Phi$ are  positive, we get
\[
0 \leq 1 - \frac{\log \norm{f}}{d} + \frac{s}{d} t_0.
\]
Or, equivalently, $t_0 \geq - \frac{d}{s} + \frac{\log \norm{f}}{s}.$
Therefore, $\alpha_0 \geq  e^{-d/s} \norm{f}^{1/s}$ and ${\norm{\selldense{f}}} 
= \alpha_0^s \geq e^{-d} \norm{f}.$
This completes the proof of \Href{Lemma}{lem:height}.
\end{proof}

\section{John's condition --- Proof of Theorem~\ref{thm:johncondbasic}}
\label{sec:johncond}

\Href{Theorem}{thm:johncondbasic} is an immediate consequence of the following 
theorem whose proof is the topic of this section.

\begin{thm}\label{thm:johncond}
Let $\upthing{K}$ be a closed \dsymm{} set in $\Redp$, and let $s>0$. Assume 
that $\ball{d+1}\subseteq\upthing{K}$. Then the 
following hold.

\begin{enumerate}
 \item\label{part:localToIsotropic} 
Assume that $\ball{d+1}$
is a \emph{locally} maximal $s$-volume ellipsoid contained in $\upthing{K}$, 
that is, in some neighborhood of $\ball{d+1}$, no ellipsoid contained in 
$\upthing{K}$ is of larger $s$-volume. 

\noindent Then there are contact points 
$\upthing{u}_1,\ldots,\upthing{u}_k\in\bd{\ball{d+1}}\cap\bd{\upthing{K}}$ 
and positive weights $c_1,\ldots,c_k$ such that 
\begin{equation}\label{eq:johncond}
 \sum_{i=1}^k c_i \upthing{u}_i\otimes \upthing{u}_i =\upthing{S}
 \;\;\;\mbox{ and }\;\;\;\;
 \sum_{i=1}^k c_i u_i =0,
\end{equation}
where $u_i$ is the orthogonal projection of $\upthing{u}_i$ onto $\Red$ and 
$\upthing{S}=\diag(1,\ldots,1,s)=I\oplus s$. Moreover, such contact points and 
positive
weights exist for some $k$ with $d+1\leq k\leq \frac{(d+1)(d+2)}{2}+d+1$.

 \item\label{part:IsotropicToGlobal}
Assume that $\upthing{K}=\slift{f}$ for a proper log-concave function $f$, and 
that 
there are contact points and positive weights satisfying \eqref{eq:johncond}. 

\noindent Then $\ball{d+1}$ is the unique ellipsoid of (\emph{globally}) 
maximum 
$s$-volume among \dsymm{} ellipsoids contained in $\upthing{K}$.
\end{enumerate}
\end{thm}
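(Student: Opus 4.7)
The plan is to prove the two parts separately. For Part (\ref{part:localToIsotropic}), I follow a first-order optimality argument in the style of John: perturb $\ball{d+1}$ within $\MM$ and use hyperplane separation, handling the non-convexity of $\upthing{K}$ via a tangent-cone observation. For Part (\ref{part:IsotropicToGlobal}), I use the interpolation machinery of \Href{Section}{sec:interpolation} to produce an honest (not just first-order) trajectory from $\ball{d+1}$ to any competitor $\upthing{E}' \subseteq \slift{f}$, and then apply the John identities together with a Minkowski-type inequality to force $\upthing{E}' = \ball{d+1}$.

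For Part (\ref{part:localToIsotropic}), I consider the one-parameter family $\upthing{E}_t = (\upthing{I} + t\upthing{B})\ball{d+1} + tv$ with $(\upthing{B}, v) \in \MM$ and $\upthing{B} = B \oplus \beta$. From \eqref{eq:sfunc_of_ellipsoida}, the first-order $s$-volume change at $t = 0$ equals $\volbs\iprod{\upthing{B}}{\upthing{S}}$, independent of $v$. Suppose that the closed convex cone $\upthing{C} \subseteq \MM$ generated by $\{(\upthing{u}\otimes\upthing{u}, u) : \upthing{u} \in \mathcal{C}\}$, where $\mathcal{C} := \bd{\ball{d+1}} \cap \bd{\upthing{K}}$, fails to contain $(\upthing{S}, 0)$. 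Hyperplane separation then gives $(\upthing{B}, v) \in \MM$ with $\iprod{\upthing{B}}{\upthing{S}} > 0$ yet $\iprod{\upthing{B}}{\upthing{u}\otimes\upthing{u}} + \iprod{v}{u} \leq 0$ for every $\upthing{u} = (u,\xi) \in \mathcal{C}$. Subtracting a small multiple of $(\upthing{I}, 0)$ makes the latter inequalities strictly negative, uniformly on the compact set $\mathcal{C}$, while preserving positivity of $\iprod{\upthing{B}}{\upthing{S}}$. The observation that sidesteps the non-convexity of $\upthing{K}$ is that any $x \in \Redp$ with $\iprod{x}{\upthing{u}} < 0$ satisfies $\upthing{u} + tx \in \ball{d+1} \subseteq \upthing{K}$ for small $t > 0$. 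Applying this uniformly on $\mathcal{C}$, together with the fact that $\bd{\ball{d+1}} \setminus \mathcal{C}$ lies in the interior of $\upthing{K}$, shows $\upthing{E}_t \subseteq \upthing{K}$ for small $t > 0$, which contradicts local maximality. Hence $(\upthing{S}, 0) \in \upthing{C}$; Carath\'eodory's theorem in $\MM$, whose dimension is given by \eqref{eq:MMdim}, then yields the cardinality upper bound, while the lower bound $k \geq d+1$ follows from $\upthing{S}$ having rank $d+1$ and each $\upthing{u}_i\otimes\upthing{u}_i$ being rank one. The $d$-symmetry of $\upthing{K}$ ensures that $\mathcal{C}$ is invariant under $(u,\xi)\mapsto(u,-\xi)$, which makes the combination on the matrix side block-diagonal and produces the form $\upthing{S} = I \oplus s$.

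For Part (\ref{part:IsotropicToGlobal}), suppose the John decomposition \eqref{eq:johncond} holds and let $\upthing{E}' = (A' \oplus \alpha')\ball{d+1} + a'$ be any \dsymm{} ellipsoid in $\slift{f}$ with $\smeasure{\upthing{E}'} \geq \volbs$. Applying \Href{Lemma}{lem:inclusion_ellipsoids} with $f_1 = f_2 = f$, $s_1 = s_2 = s$, $\beta_1 = 1-t$, $\beta_2 = t$ produces the interpolation curve $\upthing{E}_t$ with parameters $\bigl((1-t)I + tA',\, \alpha'^t,\, ta'\bigr)$, satisfying $\upthing{E}_t \subseteq \slift{f}$ for all $t \in [0, 1]$. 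By \Href{Lemma}{lem:touching_cond_log_concave}, $\bd{\slift{f}}$ has a well-defined tangent hyperplane at each contact point $\upthing{u}_i = (u_i,\xi_i)$, coinciding with that of $\bd{\ball{d+1}}$ with outer normal proportional to $\upthing{u}_i$. Since the corresponding boundary point of $\upthing{E}_t$ remains in $\slift{f}$ for $t \in [0, 1]$, its first-order velocity at $t = 0$ must have nonpositive projection on $\upthing{u}_i$, yielding
\[
 \iprod{(A' - I)u_i + a'}{u_i} + \log\alpha' \cdot \xi_i^2 \;\leq\; 0.
\]
Weighting these by the $c_i$ and using the identities $\sum c_i u_i\otimes u_i = I$, $\sum c_i \xi_i^2 = s$ and $\sum c_i u_i = 0$ from \eqref{eq:johncond} collapses the whole family of pointwise constraints to the single scalar inequality $\tr A' - d + s\log\alpha' \leq 0$. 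Combining this with the tangent-line bound $\log\det A' \leq \tr A' - d$ on the positive-definite cone gives
\[
 \log\smeasure{\upthing{E}'} \;=\; \log\volbs + s\log\alpha' + \log\det A' \;\leq\; \log\volbs,
\]
which together with the assumption $\smeasure{\upthing{E}'} \geq \volbs$ forces equality throughout. Strict concavity of $\log\det$ forces $A' = I$; this in turn gives $\alpha' = 1$; and the individual contact-point inequalities then reduce to $\iprod{a'}{u_i} \leq 0$. Summing with the $c_i$ and using $\sum c_i u_i = 0$ forces $\iprod{a'}{u_i} = 0$ for every $i$, and since the $u_i$ span $\Red$ (from the upper $d\times d$ block of the first John identity), $a' = 0$, completing the uniqueness claim.

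The main obstacle in both parts is the non-convexity of the container set. In Part (\ref{part:localToIsotropic}) it is handled by the tangent-cone observation and by slightly strengthening the separating inequalities so that the perturbed ellipsoid fits strictly inside $\upthing{K}$ for small $t$. In Part (\ref{part:IsotropicToGlobal}) the interpolation lemma supplies an actual path in $\slift{f}$, not merely a first-order feasible direction, and \Href{Lemma}{lem:touching_cond_log_concave} guarantees that $\slift{f}$ is smooth enough at each contact point for a genuine first-order inequality to be extracted.
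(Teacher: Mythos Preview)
Your Part~(\ref{part:localToIsotropic}) is essentially the paper's argument (Claims~\ref{claim:Hseparates}--\ref{claim:HimpliesNotLocMax}); one point you leave implicit is the symmetrization of the separating functional $(\upthing{B},v)$ to block-diagonal form (the paper's \Href{Claim}{claim:HisinM}), without which $(\upthing{I}+t\upthing{B})\ball{d+1}+tv$ need not be \dsymm{} and the $s$-volume is undefined. Your remark that ``the $d$-symmetry of $\upthing{K}$ ensures that $\mathcal{C}$ is invariant under $(u,\xi)\mapsto(u,-\xi)$'' is the right observation, but it should be applied to the separating $(\upthing{B},v)$, not to the decomposition side.

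Your Part~(\ref{part:IsotropicToGlobal}) takes a genuinely different and somewhat cleaner route than the paper. Both arguments build the interpolation curve $\upthing{E}_t$ via \Href{Lemma}{lem:inclusion_ellipsoids} and extract, at each contact point, the first-order inequality $\iprod{(A'-I)u_i+a'}{u_i}+(\log\alpha')\xi_i^2\le 0$ (the paper's \Href{Claim}{claim:supportplane}). From here the paper packages these as a separating functional in $\MM$ and invokes \Href{Claim}{claim:Hseparates} to contradict \eqref{eq:johncond}; you instead take the $c_i$-weighted sum directly, collapse it using \eqref{eq:johncond} to $\tr A'-d+s\log\alpha'\le 0$, and combine with $\log\det A'\le\tr A'-d$ to bound $\smeasure{\upthing{E}'}$. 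The equality analysis (forcing $A'=I$, $\alpha'=1$, then $\iprod{a'}{u_i}=0$ via the sign-plus-zero-sum trick) is a nice touch that yields uniqueness without a separate appeal to \Href{Theorem}{thm:johnunicity}. The paper's indirection buys nothing extra here; your version is shorter.

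One small gap in Part~(\ref{part:IsotropicToGlobal}): \Href{Lemma}{lem:touching_cond_log_concave} only gives a tangent hyperplane when $\upthing{u}_i\notin\Red$ (i.e.\ $\xi_i\ne 0$); for contact points on $\Red$ you must argue separately, as the paper does in \Href{Claim}{claim:supportplane}, via the convexity of the support of $f$ with outer normal $u_i$. The inequality you write down is still correct in that case, so this is a one-line fix.
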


We equip $\MM$ (for the definition, see \eqref{eq:Mdef}) with an inner product 
(that comes from the Frobenius product on 
the space of matrices and the standard inner product on $\Red$) defined by
\[ 
\iprod{(\upthing{A},a)}{(\upthing{B},b)}=
 \tr{{\upthing{A}}\,{\upthing{B}}}+\iprod{a}{b}.
\]
Thus, we may use the topology of $\MM$ on the set $\ellips$ of ellipsoids in 
$\Redp$.

Denote the set of contact points by 
$C=\bd{\ball{d+1}}\cap\bd{\upthing{K}}$, and consider
\[
 \widehat{C}=\left\{(\upthing{u}\otimes\upthing{u}, u)\st 
\upthing{u}\in C\right\}\subset \MM,
\]
where $u$ denotes the orthogonal projection of $\upthing{u}$ onto $\Red$.

The proof of Part~\eqref{part:localToIsotropic} of \Href{Theorem}{thm:johncond} 
is an adaptation of the argument given in \cite{ball1997elementary} and 
\cite{GruberBook} (see also \cite{GLMP04, GPT01, BR02,L79} and 
\cite[Theorem~14.5]{TJ89}) to the $s$-volume.
The idea is that, if there are no contact points and positive weights 
satisfying 
\eqref{eq:johncond}, then there is a path, namely a \emph{line segment} in 
the space $\ellips$ of ellipsoids starting from $\ball{d+1}$ such that the 
$s$-volume 
increases along the path and the path stays in the family of ellipsoids 
contained in 
$\upthing{K}$.

{
Part~\eqref{part:IsotropicToGlobal} on the other hand, needs a finer argument.
The idea is that, if $\ball{d+1}$ is not the global maximizer of the 
$s$-volume, then we will find a path in $\ellips$ starting from $\ball{d+1}$ 
such that the $s$-volume increases along 
the path, and the path stays in the family of ellipsoids contained in 
$\upthing{K}$. 
The difficulty is that $\slift{f}$ is not necessarily convex. Thus, this path 
\emph{is not a line segment}. We will, however, be able to differentiate the 
$s$-volume along 
this path, and by doing so, we will show that $(\upthing{S},0)$ is separated by 
a hyperplane from the points $\widehat{C}$ in $\MM$, which in turn will yield 
that there are no contact points and positive weights satisfying 
\eqref{eq:johncond}.
}

First, as a standard observation, we state the relationship between 
\eqref{eq:johncond} and separation by a hyperplane of the point 
$(\upthing{S},0)$ from the set $\widehat{C}$ in the space $\MM$.

\begin{claim}\label{claim:Hseparates}
The following assertions are equivalent.
\begin{enumerate}
 \item\label{item:eqjohncond} 
 There are contact points and positive weights satisfying \eqref{eq:johncond}.
 \item\label{item:eqjohncondmodified} 
 There are contact points and positive weights satisfying a modified version of 
\eqref{eq:johncond}, where in the second equation $u_i$ is replaced by 
$\upthing{u}_i$. 
 \item\label{item:sinpos}
 $(\upthing{S},0)\in\pos(\widehat{C})$.
 \item\label{item:sinconv} 
 $\frac{1}{d+s}(\upthing{S},0)\in\conv{\widehat{C}}$.
 \item\label{item:separation} 
There is no $(\upthing{H},h)\in\MM$ with
\begin{equation}\label{eq:Hseparates}
 \iprod{(\upthing{H},h)}{(\upthing{S},0)}>0, \text{ and }
 \iprod{(\upthing{H},h)}{(\upthing{u}\otimes\upthing{u},u)}<0 
 \text{ for all }\upthing{u}\in C.
\end{equation}
\item\label{item:separationweaker}
There is no $(\upthing{H},h)\in\MM$ with
\begin{equation}\label{eq:Hseparatesweak}
 \iprod{(\upthing{H},h)}{(\upthing{S},0)}>0, \text{ and }
 \iprod{(\upthing{H},h)}{(\upthing{u}\otimes\upthing{u},u)}\leq0 
 \text{ for all }\upthing{u}\in C.
\end{equation}
\end{enumerate}
\end{claim}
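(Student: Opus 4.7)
The plan is to prove the six assertions equivalent via a short cycle, relying on (a) the $d$-symmetry of $C$, inherited from that of $\upthing{K}$ and $\ball{d+1}$; (b) the trace normalization $\tr{\upthing{u}\otimes \upthing{u}} = |\upthing{u}|^2 = 1$ for every generator of $\widehat{C}$; and (c) classical hyperplane separation in the finite-dimensional inner product space $\MM$. The implications (1) $\Leftrightarrow$ (3) and (3) $\Leftrightarrow$ (4) are essentially immediate: the first is the very definition of $\pos(\widehat{C})$, while the second follows by taking the trace of the matrix part of $\sum c_i \upthing{u}_i \otimes \upthing{u}_i = \upthing{S}$, which yields $\sum c_i = d+s$ and hence converts a positive combination into a convex one after dividing by $d+s$.

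For (1) $\Leftrightarrow$ (2), the direction ($\Leftarrow$) is trivial by projecting the vector identity $\sum c_i \upthing{u}_i = 0$ onto $\Red$. For ($\Rightarrow$), the key observation is that $d$-symmetry of $C$ ensures that whenever $\upthing{u}_i = (u_i, \xi_i) \in C$, also $(u_i, -\xi_i) \in C$. Replacing each $\upthing{u}_i$ in the representation witnessing (1) by the symmetrized pair $(u_i, \pm \xi_i)$, each carrying weight $c_i/2$, preserves the matrix identity (a short block computation: the off-diagonal $\xi_i u_i$ blocks cancel pairwise and the diagonal blocks are unchanged) and promotes the projected identity $\sum c_i u_i = 0$ to the full identity $\sum c'_j \upthing{u}'_j = 0$, since the last coordinates cancel pairwise by construction.

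The heart of the argument is (3) $\Leftrightarrow$ (6). Since $C = \bd{\ball{d+1}} \cap \bd{\upthing{K}}$ is compact, so is its continuous image $\widehat{C} \subset \MM$; hence $\conv(\widehat{C})$ is compact by Carath\'eodory's theorem in finite dimension. The linear functional $(\upthing{A}, a) \mapsto \tr \upthing{A}$ equals $1$ on $\widehat{C}$, so $0 \notin \conv(\widehat{C})$, which forces the convex cone $\pos(\widehat{C})$ to be closed. The bipolar theorem applied to this closed convex cone then yields $\neg$(3) $\Leftrightarrow$ $\neg$(6) (with a separating hyperplane through the origin). Finally, (5) $\Leftrightarrow$ (6): the direction (6) $\Rightarrow$ (5) is trivial, and conversely, given a weak separator $(\upthing{H}, h)$ satisfying \eqref{eq:Hseparatesweak}, the perturbation $(\upthing{H} - \varepsilon \upthing{I}, h)$ satisfies \eqref{eq:Hseparates} for any sufficiently small $\varepsilon > 0$: the shift subtracts exactly $\varepsilon$ from each $\iprod{(\upthing{H}, h)}{(\upthing{u}\otimes \upthing{u}, u)}$ (pushing it strictly below zero) while subtracting only $\varepsilon(d+s)$ from the already positive $\iprod{(\upthing{H}, h)}{(\upthing{S}, 0)}$. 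The main subtlety I anticipate is precisely this last point: one has to route the separation through the closed cone $\pos(\widehat{C})$ rather than the compact set $\conv(\widehat{C})$, so that the supporting hyperplane passes through the origin at threshold exactly $0$ rather than at some strictly positive level.
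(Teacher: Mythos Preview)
Your proof is correct and follows essentially the same route as the paper's: the trace argument for (3)$\Leftrightarrow$(4), hyperplane separation of $(\upthing{S},0)$ from the cone $\pos(\widehat{C})$ for the equivalence with (5)/(6), and the $d$-symmetry of $C$ for (1)$\Leftrightarrow$(2). The paper leaves (1)$\Leftrightarrow$(2)$\Leftrightarrow$(3) and (5)$\Leftrightarrow$(6) to the reader and states the separation step in a single sentence; your explicit verification that $\pos(\widehat{C})$ is closed (via $0\notin\conv{\widehat{C}}$) and your perturbation $(\upthing{H}-\varepsilon\upthing{I},h)$ to pass from \eqref{eq:Hseparatesweak} to \eqref{eq:Hseparates} fill in exactly the details the paper omits.
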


\begin{proof}
We leave it to the reader to verify the equivalence of \eqref{item:eqjohncond} 
and \eqref{item:eqjohncondmodified} and \eqref{item:sinpos}, as well as that of
\eqref{item:separation} and \eqref{item:separationweaker}.

To see that \eqref{item:sinpos} is equivalent to \eqref{item:sinconv}, we
take trace in \eqref{eq:johncond} and notice that 
$\tr{\upthing{u}\otimes\upthing{u}}=\tr{\frac{1}{d+s}\upthing{S}}=1$,
which shows that $\sum_{i=1}^k c_i=d+s$.

Finally, observe that the convex cone $\pos(\widehat{C})$ in $\MM$ does not 
contain 
the point $\left(\upthing{S},0\right)\in\MM$ if and only if it is separated 
from this point by a hyperplane through the origin. This is what 
\eqref{eq:Hseparates} expresses, showing that \eqref{item:sinpos} is equivalent 
to \eqref{item:separation}, and hence, completing the proof of 
\Href{Claim}{claim:Hseparates}.
\end{proof}

\begin{claim}\label{claim:caratheodory}
If contact points and positive weights satisfying \eqref{eq:johncond} exist for 
some 
$k$, then 
they exist for some $d+1\leq k\leq \frac{(d+1)(d+2)}{2}+d+1$.
\end{claim}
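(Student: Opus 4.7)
The plan is to use the reformulation from \Href{Claim}{claim:Hseparates} to turn the existence of contact points and positive weights into a convex hull membership statement in the finite-dimensional vector space $\MM$, and then apply Carathéodory's theorem to get the upper bound on $k$; the lower bound will come from a simple rank argument.

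For the upper bound, by the equivalence of items \eqref{item:eqjohncond} and \eqref{item:sinconv} in \Href{Claim}{claim:Hseparates}, the existence of contact points $\upthing{u}_1,\ldots,\upthing{u}_k\in C$ and positive weights $c_1,\ldots,c_k$ satisfying \eqref{eq:johncond} is equivalent to the membership $\frac{1}{d+s}(\upthing{S},0)\in\conv{\widehat{C}}$, where $\widehat{C}\subset \MM$. By \eqref{eq:MMdim}, we have $\dim \MM = \frac{(d+1)(d+2)}{2}+d$, so Carathéodory's theorem yields that the point can be written as a convex combination of at most $\dim\MM+1 = \frac{(d+1)(d+2)}{2}+d+1$ elements of $\widehat{C}$. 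Rescaling these convex coefficients by $d+s$ produces the required positive weights for \eqref{eq:johncond}.

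For the lower bound $k\geq d+1$, I would argue by rank. The matrix $\upthing{S}=I\oplus s$ on the left-hand side of \eqref{eq:johncond} is positive definite and hence has rank $d+1$. On the other hand, each summand $c_i \upthing{u}_i\otimes\upthing{u}_i$ (with $c_i>0$) is a rank-one matrix, so the rank of their sum is at most $k$. Therefore $k\geq d+1$.

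The main subtlety worth double-checking is that the ambient space for Carathéodory is exactly $\MM$ and not a proper affine subspace: we must verify that $\widehat{C}$ and the target point $(\upthing{S},0)/(d+s)$ lie in the same affine hull so that $\dim\MM+1$ is the right bound. Since $\MM$ is a vector space and $0\in\MM$ is always reachable as a limit of convex combinations in the cone framework, and since we treat $\widehat{C}$ as a subset of the full vector space $\MM$, the standard form of Carathéodory's theorem applies and gives the stated bound without further adjustment.
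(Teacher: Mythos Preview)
Your proof is correct and follows essentially the same route as the paper: the lower bound via the rank-one nature of $\upthing{u}_i\otimes\upthing{u}_i$, and the upper bound via item~\eqref{item:sinconv} of \Href{Claim}{claim:Hseparates} together with Carath\'eodory's theorem in $\MM$. Your final paragraph is unnecessary, though: Carath\'eodory's theorem in a vector space of dimension $N$ gives the bound $N+1$ outright, with no affine-hull verification required.
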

\begin{proof}
Since $\upthing{u}\otimes\upthing{u}$ is of rank 1, the lower bound on $k$ is 
obvious. The upper bound follows from \eqref{item:sinconv} in 
\Href{Claim}{claim:Hseparates} and Carath\'eodory's theorem applied in the 
vector 
space $\MM$.
\end{proof}

Next, we show that if $(\upthing{S},0)$ and $\widehat{C}$ are separated by a 
hyperplane in $\MM$, then the normal vector of that hyperplane can be chosen 
to be of a special form.
\begin{claim}\label{claim:HisinM}
There is $(\upthing{H},h)\in\MM$ satisfying \eqref{eq:Hseparates} if and only 
if there is 
$(\upthing{H}_0,h) \in \MM$ satisfying \eqref{eq:Hseparates}, where 
$\upthing{H}_0=H_0\oplus\gamma$ for some $H_0\in\Re^{d\times d}$.
\end{claim}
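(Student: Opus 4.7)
The plan is to exploit the \dsymm{} structure of the set of contact points $C$ via an averaging (symmetrization) argument applied to the off-diagonal block of $\upthing{H}$. The ``if'' direction is trivial, since $\upthing{H}_0 = H_0 \oplus \gamma$ is itself a symmetric $(d+1)\times(d+1)$ matrix, so $(\upthing{H}_0, h) \in \MM$.

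For the nontrivial direction, suppose $(\upthing{H}, h) \in \MM$ satisfies \eqref{eq:Hseparates}. First I would write $\upthing{H}$ in the block form
\[
\upthing{H} = \begin{pmatrix} H_0 & v \\ v^{\intercal} & \gamma \end{pmatrix},
\]
with $H_0 \in \Re^{d\times d}$ symmetric, $v \in \Red$, $\gamma \in \Re$, and set $\upthing{H}_0 = H_0 \oplus \gamma$. The candidate is then $(\upthing{H}_0, h) \in \MM$.

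The key step uses the $d$-symmetry of $C$: since $\ball{d+1}$ is $d$-symmetric and $\upthing{K}$ is $d$-symmetric by hypothesis, so is $C = \bd{\ball{d+1}} \cap \bd{\upthing{K}}$. Hence for every contact point $\upthing{u} = (u, \xi) \in C$, the reflection $\widetilde{\upthing{u}} = (u, -\xi)$ also lies in $C$. A direct computation gives
\[
\upthing{u}^{\intercal} \upthing{H}\, \upthing{u} = u^{\intercal} H_0 u + 2\xi \iprod{v}{u} + \gamma \xi^2,
\qquad
\upthing{u}^{\intercal} \upthing{H}_0\, \upthing{u} = u^{\intercal} H_0 u + \gamma \xi^2,
\]
so that $\upthing{u}^{\intercal} \upthing{H}_0 \upthing{u}$ is the average of $\upthing{u}^{\intercal} \upthing{H}\, \upthing{u}$ and $\widetilde{\upthing{u}}^{\intercal} \upthing{H}\, \widetilde{\upthing{u}}$. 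Since $\upthing{u}$ and $\widetilde{\upthing{u}}$ have the same orthogonal projection $u$ onto $\Red$, averaging the two strict inequalities coming from \eqref{eq:Hseparates} yields
\[
\iprod{(\upthing{H}_0, h)}{(\upthing{u}\otimes\upthing{u}, u)} < 0 \quad \text{for all } \upthing{u} \in C.
\]

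Finally, because $\upthing{S} = I \oplus s$ is itself block-diagonal, the off-diagonal part $v$ of $\upthing{H}$ contributes nothing to $\iprod{\upthing{H}}{\upthing{S}}$, so
\[
\iprod{(\upthing{H}_0, h)}{(\upthing{S}, 0)} = \tr(H_0) + s\gamma = \iprod{(\upthing{H}, h)}{(\upthing{S}, 0)} > 0,
\]
showing that $(\upthing{H}_0, h)$ satisfies \eqref{eq:Hseparates}. There is no real obstacle here; the entire content of the claim is the observation that the $d$-symmetry of $C$ makes the off-diagonal block $v$ irrelevant to the separation inequalities.
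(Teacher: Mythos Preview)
Your proof is correct and follows essentially the same approach as the paper's own proof: both exploit the $d$-symmetry of $C$ to pair each contact point $\upthing{u}$ with its reflection about $\Red$, then average the two corresponding strict inequalities to kill the off-diagonal block of $\upthing{H}$, while the block-diagonal form of $\upthing{S}$ guarantees that the first inequality is unaffected. The only difference is presentational---you compute $\upthing{u}^{\intercal}\upthing{H}\,\upthing{u}$ explicitly in coordinates, whereas the paper phrases the same averaging by noting that the matrix $\frac{1}{2}(\upthing{u}\otimes\upthing{u}+\upthing{u}^{\prime}\otimes\upthing{u}^{\prime})$ has zero off-diagonal block.
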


\begin{proof}
For any $\upthing{u}\in\Redp$, let $\upthing{u}^{\prime}$ denote the reflection 
of $\upthing{u}$ about $\Red$, that is, $\upthing{u}^{\prime}$ differs from 
$\upthing{u}$ only in the last coordinate, which is the opposite of the last 
coordinate of $\upthing{u}$. Since both $\upthing{K}$ and $\ball{d+1}$ are 
symmetric about $\Red$, we conclude that, if $\upthing{u}$ is in $C$, then so 
is 
$\upthing{u}^{\prime}$. 

Let $\upthing{H}_0$ denote the matrix obtained 
from $\upthing{H}$ by setting the first $d$ entries of the last row to 
zero, and the first $d$ entries of the last column to zero. Thus, 
$\upthing{H}_0$ is of the required form. We show that 
$(\upthing{H}_0,h)$ satisfies \eqref{eq:Hseparates}. Clearly, 
$
\iprod{\left(\upthing{H},h\right)}{\left(\upthing{S},0\right)}=
\iprod{\left(\upthing{H}_0,h\right)}{\left(\upthing{S},0\right)}$, and 
thus, the first inequality in \eqref{eq:Hseparates} holds. 

For the other 
inequality in \eqref{eq:Hseparates}, consider an arbitrary vector 
$\upthing{u}\in C$. Then the inequalities
$0>\iprod{(\upthing{H},h)}{(\upthing{u}\otimes\upthing{u},u)}$ and 
$0>\iprod{(\upthing{H},h)}{(\upthing{u}^{\prime}\otimes 
\upthing{u}^{\prime},u)}$  hold. Note that in the $(d+1)\times(d+1)$ matrix 
$
(\upthing{u}^{\prime}\otimes\upthing{u}^{\prime}+\upthing{u}\otimes\upthing{u})
$, the first $d$ entries of the last row as well as of the last column are 0. 
Thus,
\[
0>\iprod{(\upthing{H},h)}{\big((\upthing{u}^{\prime}\otimes 
\upthing{u}^{\prime}+\upthing{u}\otimes\upthing{u})/2,u\big)}=
\]\[
\iprod{(\upthing{H}_0,h)}{\big((\upthing{u}^{\prime}\otimes 
\upthing{u}^{\prime}+\upthing{u}\otimes\upthing{u})/2,u\big)}=
\iprod{(\upthing{H}_0,h)}{(\upthing{u}\otimes\upthing{u},u)},
\]
completing the proof of \Href{Claim}{claim:HisinM}.
\end{proof}

In both parts of the proof of \Href{Theorem}{thm:johncond}, we will consider 
a path in $\ellips$ and  compute the derivative of the $s$-volume 
at the start of this path.

\begin{claim}\label{claim:svolumederivative}
Let $\varepsilon_0>0$ and let $\gamma:[0,\varepsilon_0]\to\Re$ be a continuous 
function whose right derivative at 0 
exists. Let $H\in\Re^{d\times d}$ be an arbitrary symmetric matrix and 
$h\in\Red$. Consider the path
\begin{equation}\label{eq:pathinE}
 \upthing{E}:[0,\varepsilon_0]\to\MM;\;\;t\mapsto
 \bigg(\upthing{I}+t\big(H\oplus \gamma(t)\big), th\bigg).
\end{equation}
For sufficiently small $t$, we have that $\upthing{E}(t)$ is in $\ellips$, and 
the right derivative of the $s$-volume is
\begin{equation}\label{eq:svolumederivative}
\left.\frac{\di}{\di t}\right|_{t=0^{+}}
\frac{\smeasure{\upthing{E}(t)}}{\volbs}= 
\iprod{(H\oplus\gamma(0),h)}{(\upthing{S},0)}.
\end{equation}
\end{claim}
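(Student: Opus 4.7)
The plan is a direct verification via the explicit formula for the $s$-volume of an element of $\ellips$. First I would unfold the block-diagonal notation: since $\upthing{I} = I \oplus 1$, the path can be written as
\[
\upthing{E}(t) = \bigl((I+tH) \oplus (1+t\gamma(t)),\; th\bigr).
\]
Because $H$ is symmetric, the eigenvalues of $I+tH$ are $1 + t\lambda_i$ where $\lambda_i$ are the eigenvalues of $H$, so $I+tH$ is positive definite for all sufficiently small $t \ge 0$; and $1 + t\gamma(t) > 0$ for $t$ near $0$ by continuity of $\gamma$. Hence $\upthing{E}(t) \in \ellips$ on some right neighborhood of $0$, as required.

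Next, by \eqref{eq:sfunc_of_ellipsoida},
\[
\smeasure{\upthing{E}(t)} / \volbs = \bigl(1+t\gamma(t)\bigr)^{s} \det(I+tH).
\]
Both factors equal $1$ at $t = 0$. Using the expansion $(1+x)^s = 1 + sx + O(x^2)$ as $x \to 0$, together with the hypothesis that $\gamma$ has a right derivative at $0$ (so in particular $\gamma$ is continuous at $0$ and $t\gamma(t) = t\gamma(0) + o(t)$), we obtain
\[
\bigl(1+t\gamma(t)\bigr)^{s} = 1 + s\gamma(0)\, t + o(t) \quad \text{as } t \to 0^+.
\]
For the second factor, the polynomial identity $\det(I+tH) = 1 + t\,\tr{H} + O(t^2)$ gives $1 + \tr{H}\, t + o(t)$.

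Multiplying the two expansions and keeping first-order terms yields
\[
\smeasure{\upthing{E}(t)} / \volbs = 1 + \bigl(\tr{H} + s\gamma(0)\bigr)\, t + o(t),
\]
so the right derivative at $t = 0$ exists and equals $\tr{H} + s\gamma(0)$. Finally, unpacking the inner product on $\MM$,
\[
\iprod{(H \oplus \gamma(0),\, h)}{(\upthing{S},\, 0)} = \tr{(H \oplus \gamma(0))\,(I \oplus s)} + \iprod{h}{0} = \tr{H} + s\gamma(0),
\]
which matches and establishes \eqref{eq:svolumederivative}. No real obstacle arises here — the claim is essentially bookkeeping once the explicit $s$-volume formula is in hand; the only point deserving a moment of care is the Taylor expansion of $(1+t\gamma(t))^s$ when $\gamma$ is only known to admit a right derivative at $0$, which is handled cleanly by the two-term expansion of $(1+x)^s$ together with $t\gamma(t) = t\gamma(0) + o(t)$.
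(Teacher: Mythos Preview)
Your argument is correct and follows essentially the same route as the paper: both invoke \eqref{eq:sfunc_of_ellipsoida} to reduce to differentiating $(1+t\gamma(t))^{s}\det(I+tH)$ at $t=0^{+}$, obtaining $\tr{H}+s\gamma(0)$. Your version is a touch more explicit about the first-order expansions and about verifying $\upthing{E}(t)\in\ellips$ for small $t$, but the substance is the same.
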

\begin{proof}
We apply \eqref{eq:sfunc_of_ellipsoida}, 
\[
\left.\frac{\di}{\di t}\right|_{t=0^{+}}
 \frac{\smeasure{\upthing{E}(t)}}{\volbs}
 =
\left.\frac{\di}{\di t}\right|_{t=0^{+}}
\bigg[(1+t \gamma(t))^s\det(I+t H) \bigg]= 
\]\[
(1+0 \cdot \gamma(0))^s
\left.\frac{\di}{\di t}\right|_{t=0^{+}}
\bigg[\det(I+t H) \bigg]+ 
\det(I+0\cdot H)
\left.\frac{\di}{\di t}\right|_{t=0^{+}}
\bigg[(1+t \gamma(t))^s\bigg]=
\]\[
\tr{H}+s\gamma(0),
\]
which is equal to the right hand side of \eqref{eq:svolumederivative} 
completing the proof of \Href{Claim}{claim:svolumederivative}. 
\end{proof}

\begin{claim}\label{claim:HimpliesNotLocMax}
If there is $(H\oplus\gamma,h)\in\MM$ satisfying 
\eqref{eq:Hseparates}, then $\ball{d+1}$ is not a locally maximal $s$-volume 
\dsymm{}
ellipsoid contained in $\upthing{K}$.
\end{claim}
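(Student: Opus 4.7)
The plan is to construct a short path of \dsymm{} ellipsoids in $\upthing{K}$ starting at $\ball{d+1}$ along which the $s$-volume is strictly increasing. Given $(H\oplus\gamma,h)\in\MM$ satisfying \eqref{eq:Hseparates}, I will take
\[
\upthing{E}(t)=\bigl(\upthing{I}+t(H\oplus\gamma)\bigr)\ball{d+1}+th,
\]
which is the special case of the path \eqref{eq:pathinE} with $\gamma(t)\equiv\gamma$; for small $t>0$ this lies in $\ellips$. By \Href{Claim}{claim:svolumederivative}, the right derivative at $t=0$ of $\smeasure{\upthing{E}(t)}/\volbs$ equals $\iprod{(H\oplus\gamma,h)}{(\upthing{S},0)}$, which is strictly positive by the first inequality in \eqref{eq:Hseparates}. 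Hence $\smeasure{\upthing{E}(t)}>\smeasure{\ball{d+1}}$ for all sufficiently small $t>0$, and the claim reduces to showing that $\upthing{E}(t)\subseteq\upthing{K}$ for small such $t$.

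The main obstacle is that $\upthing{K}$ is not assumed convex, so the classical argument that keeps a moving ellipsoid inside its container by convex combinations does not apply. I will instead argue by contradiction using compactness. Suppose the containment fails: there exist $t_n\searrow 0$ and $\upthing{v}_n\in\ball{d+1}$ with $\upthing{w}_n:=\bigl(\upthing{I}+t_n(H\oplus\gamma)\bigr)\upthing{v}_n+t_n h\notin\upthing{K}$. Passing to a subsequence, $\upthing{v}_n\to\upthing{v}^*\in\ball{d+1}$, and since $\upthing{w}_n-\upthing{v}_n\to 0$, also $\upthing{w}_n\to\upthing{v}^*$. Since the complement of the closed set $\upthing{K}$ is open but contains every $\upthing{w}_n$, the limit satisfies $\upthing{v}^*\in\bd{\upthing{K}}$; combined with $\upthing{v}^*\in\ball{d+1}$ and the obvious inclusion $\inter{\ball{d+1}}\subseteq\inter{\upthing{K}}$, this forces $\upthing{v}^*\in\bd{\ball{d+1}}\cap\bd{\upthing{K}}=C$.

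Now the second inequality in \eqref{eq:Hseparates} delivers the contradiction. The continuous function $\Psi(\upthing{v}):=\iprod{(H\oplus\gamma)\upthing{v}+h}{\upthing{v}}$ (with $h\in\Red$ identified with $(h,0)\in\Redp$) satisfies $\Psi(\upthing{v}^*)=\iprod{(H\oplus\gamma,h)}{(\upthing{v}^*\otimes\upthing{v}^*,v^*)}<0$, so $\Psi(\upthing{v}_n)<-\delta$ for some $\delta>0$ and all large $n$. Expanding,
\[
|\upthing{w}_n|^2=|\upthing{v}_n|^2+2t_n\Psi(\upthing{v}_n)+O(t_n^2)\leq 1-2\delta t_n+O(t_n^2)<1
\]
for large $n$, which places $\upthing{w}_n\in\inter{\ball{d+1}}\subseteq\upthing{K}$, contradicting $\upthing{w}_n\notin\upthing{K}$. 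Hence $\upthing{E}(t)\subseteq\upthing{K}$ for all sufficiently small $t>0$, and since these ellipsoids lie arbitrarily close to $\ball{d+1}$ while having strictly greater $s$-volume, $\ball{d+1}$ cannot be a locally maximal $s$-volume ellipsoid in $\upthing{K}$.
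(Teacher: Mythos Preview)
Your proof is correct and follows essentially the same approach as the paper: the same linear path $\upthing{E}(t)$, the same appeal to \Href{Claim}{claim:svolumederivative} for the volume increase, and the same mechanism for containment (points near $C$ are pushed into $\inter{\ball{d+1}}$ by the strict inequality in \eqref{eq:Hseparates}, while points away from $C$ lie in $\inter{\upthing{K}}$). The paper carries out the containment step by a direct compactness argument constructing an open neighborhood $\mathcal{W}$ of $C$, whereas you package the same idea as a sequential-compactness contradiction; this is a stylistic difference rather than a different route.
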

\begin{proof}
Let $\gamma(t)=\gamma$ be the constant function for $t\geq0$, and consider the 
path \eqref{eq:pathinE}. By \Href{Claim}{claim:svolumederivative} 
and \eqref{eq:Hseparates}, the $s$-volume has positive derivative at the start 
of this path. Clearly, $\smeasure{\upthing{E}(t)}$ is differentiable on some 
interval $[0,\varepsilon_0]$, and hence,
there is an $\epsilon_1>0$ such that for every 
$0< t <\varepsilon_1$, we have
\begin{equation}\label{eq:smeasurebig}
 \smeasure{\upthing{E}(t)} > \smeasure{\ball{d+1}}.
\end{equation}

Now, it suffices to establish that 
there is $\epsilon_2>0$ such that for all 
$0<t<\varepsilon_2$, we have
\begin{equation}\label{eq:variationsubset}
 \upthing{E}(t)\subseteq \upthing{K}.
\end{equation}

Set $\upthing{H}=H\oplus\gamma$.
First, we fix an arbitrary contact point $\upthing{u}\in C$. We 
claim that there is an $\varepsilon(\upthing{u})>0$ such that for every 
$0<t<\varepsilon(\upthing{u})$,
we have $(\upthing{I}+t \upthing{H})\upthing{u}+t h\in\inter{\ball{d+1}}$.
Indeed,
\[
 \iprod{(\upthing{I}+t 
\upthing{H})\upthing{u}+t h}{(\upthing{I}+t 
\upthing{H})\upthing{u}+t h}=
 1+2t 
\big(\iprod{\upthing{H}\upthing{u}}{\upthing{u}}+\iprod{h}{u}
\big)+o(t)=
\]\[
1+2t 
\iprod{(\upthing{H},h)}{(\upthing{u}\otimes\upthing{u},u)}+o(t).
\]
By \eqref{eq:Hseparates}, the latter is less than 1 for a sufficiently small 
positive $t$. 
Next, the compactness of $C$ yields that there is an $\varepsilon_3>0$ such 
that 
$(\upthing{I}+\varepsilon_3\upthing{H})C+\varepsilon_3 
h\subseteq\inter{\ball{d+1}}\subseteq\upthing{K}$.

By the continuity of the map $x\mapsto (\upthing{I}+\varepsilon_3 
\upthing{H})x+\varepsilon_3 h$, there is an open 
neighborhood $\mathcal{W}$ of $C$ in $\ball{d+1}$ such that 
$(\upthing{I}+\varepsilon_3\upthing{H})\mathcal{W}+\varepsilon_3 h
\subseteq\inter{\ball{d+1}}\subseteq\upthing{K}$.
The latter combined with $\mathcal{W}\subset\inter{\ball{d+1}}$ and with the 
convexity of $\ball{d+1}$ yield that for all $0<t<\varepsilon_3$, we 
have 
$(\upthing{I}+t\upthing{H})\mathcal{W}+t h
\subseteq\inter{\ball{d+1}}\subseteq\upthing{K}$.

On the other hand, the compact set
$\ball{d+1}\setminus\mathcal{W}$ is a subset of $\inter{\upthing{K}}$, and 
hence, there is an $\varepsilon_4>0$ such that for all
$0<t<\varepsilon_4$, we have 
$(\upthing{I}+t 
\upthing{H})(\ball{d+1}\setminus\mathcal{W})+t 
h\subseteq\inter{\upthing{K}}$. Thus, 
if $0< t < \min\{\varepsilon_3,\varepsilon_4\}$, then
$(\upthing{I}+t \upthing{H})(\mathcal{W})+th\subseteq\inter{\upthing{K}}$ and
$(\upthing{I}+t \upthing{H})(\ball{d+1}\setminus\mathcal{W})+th
\subseteq\inter{\upthing{K}}$. Thus, \eqref{eq:variationsubset} holds 
concluding the proof of \Href{Claim}{claim:HimpliesNotLocMax}.
\end{proof}

\subsection{Proof of part~(\ref{part:localToIsotropic}) of 
{Theorem}~\ref{thm:johncond}}
Assume that there are no contact points and positive weights satisfying 
\eqref{eq:johncond}.
By Claims~\ref{claim:Hseparates} and \ref{claim:HisinM}, there is 
$(H\oplus\gamma,h)\in\MM$ satisfying 
\eqref{eq:Hseparates}. \Href{Claim}{claim:HimpliesNotLocMax} yields that 
$\ball{d+1}$ is not a locally maximal $s$-volume ellipsoid contained in 
$\upthing{K}$.

The bound on $k$ follows from \Href{Claim}{claim:caratheodory}, completing the 
proof of
part~(\ref{part:localToIsotropic}) of \Href{Theorem}{thm:johncond}.

\subsection{Proof of part~(\ref{part:IsotropicToGlobal}) of 
{Theorem}~\ref{thm:johncond}}

Assume that there is an ellipsoid $\upthing{E}$, represented by 
$(A\oplus\alpha,a)$,
contained in 
$\inter{\slift{f}}$ 
with $\smeasure{\upthing{E}}>\smeasure{\ball{d+1}}$.


Set $G=A-I\in\Re^{d\times d}$, and define the function
$\gamma(t)=\frac{\alpha^{t}-1}{t}$ for $t\in(0,1]$, which, with 
$\gamma(0)=\ln\alpha$, is a continuous function on $[0,1]$ whose right 
derivative at 0 exists.
Consider the path
\begin{equation*}
 \upthing{E}:[0,1]\to\MM;\;\;t\mapsto
 \bigg(\upthing{I}+t\big(G\oplus \gamma(t)\big),ta\bigg).
\end{equation*}
Clearly, this path is in $\ellips$, it starts at $\upthing{E}(0)=\ball{d+1}$ and 
ends at 
$\upthing{E}(1)=\upthing{E}$. 

\begin{claim}\label{claim:HSbigeq}
\begin{equation}\label{eq:HSbigeq}
0\leq \iprod{(G\oplus\gamma(0),a)}{(\upthing{S},0)}.
\end{equation} 
\end{claim}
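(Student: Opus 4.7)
The plan is to recognize that the path $\upthing{E}(t)$ defined in the statement is precisely the interpolated ellipsoid from \Href{Lemma}{lem:inclusion_ellipsoids} applied to the pair $\ball{d+1}$ and $\upthing{E}$, then apply the Brunn--Minkowski-type bound of \Href{Lemma}{lem:minkowski_for_s_ellipsoids}, and finally differentiate at $t=0^{+}$ via \Href{Claim}{claim:svolumederivative}.

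First, I would verify that $\upthing{E}(t)$ coincides with the interpolation ellipsoid obtained from \Href{Lemma}{lem:inclusion_ellipsoids} applied with $\upthing{E}_1=\ball{d+1}$ represented by $(I\oplus 1,0)$, $\upthing{E}_2=\upthing{E}$ represented by $(A\oplus\alpha,a)$, $f_1=f_2=f$, $s_1=s_2=s$, $\beta_1=1-t$ and $\beta_2=t$. Indeed, $(1-t)I+tA=I+tG$, the height is $(1^{(1-t)s}\alpha^{ts})^{1/s}=\alpha^{t}=1+t\gamma(t)$, and the center is $ta$. Since, by \eqref{eq:loginvconv_same_f}, $\loginfconv{((1-t)\ast f)}{(t\ast f)}=f$, \Href{Lemma}{lem:inclusion_ellipsoids} yields $\upthing{E}(t)\subseteq\slift{f}$ for every $t\in[0,1]$.

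Next, by \Href{Lemma}{lem:minkowski_for_s_ellipsoids} applied to the same data,
\[
\smeasure{\upthing{E}(t)}\ \geq\ \smeasure{\ball{d+1}}^{\,1-t}\,\smeasure{\upthing{E}}^{\,t}.
\]
Since $\upthing{E}(0)=\ball{d+1}$, subtracting $\smeasure{\ball{d+1}}$, dividing by $t>0$, and letting $t\to 0^{+}$ gives
\[
\left.\frac{\di}{\di t}\right|_{t=0^{+}}\!\!\smeasure{\upthing{E}(t)}\ \geq\ \smeasure{\ball{d+1}}\cdot\log\!\frac{\smeasure{\upthing{E}}}{\smeasure{\ball{d+1}}}\ >\ 0,
\]
where strict positivity uses the hypothesis $\smeasure{\upthing{E}}>\smeasure{\ball{d+1}}$. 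By \Href{Claim}{claim:svolumederivative}, the left-hand side equals $\volbs\cdot\iprod{(G\oplus\gamma(0),a)}{(\upthing{S},0)}$, so
\[
\iprod{(G\oplus\gamma(0),a)}{(\upthing{S},0)}\ >\ 0,
\]
which implies the claimed inequality.

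The only mildly nontrivial step is the algebraic identification $1+t\gamma(t)=\alpha^{t}$ that matches $\upthing{E}(t)$ with the output of \Href{Lemma}{lem:inclusion_ellipsoids}; everything else is an immediate consequence of the interpolation machinery built in \Href{Section}{sec:interpolation} combined with the derivative formula from \Href{Claim}{claim:svolumederivative}. In particular, the path need not lie in a line segment in $\MM$, and this is precisely where the extra flexibility of the non-linear interpolation (with $\gamma$ not assumed constant) is being used.
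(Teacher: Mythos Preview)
Your proof is correct and follows essentially the same route as the paper: identify $\upthing{E}(t)$ with the interpolated ellipsoid of \Href{Lemma}{lem:inclusion_ellipsoids}, invoke \Href{Lemma}{lem:minkowski_for_s_ellipsoids} to bound $\smeasure{\upthing{E}(t)}$ below, and differentiate at $t=0^{+}$ via \Href{Claim}{claim:svolumederivative}. The paper phrases the middle step as $\smeasure{\upthing{E}(t)}/\volbs\geq 1$ and deduces only the non-strict inequality; you keep the sharper geometric-mean bound and thereby obtain strict positivity, which is a small bonus (it would let one bypass the $\delta$-perturbation $H=G+\delta I$ that the paper introduces immediately afterwards solely to force strictness in \eqref{eq:HSbig}).
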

\begin{proof}
By \Href{Lemma}{lem:minkowski_for_s_ellipsoids}, for every $t\in[0,1]$, we have
\begin{equation*}
 \frac{\smeasure{\upthing{E}(t)}}{\volbs}\geq 1,
\end{equation*}
and hence, for the right derivative, we have
\begin{equation*}
\left.\frac{\di}{\di t}\right|_{t=0^{+}}
 \frac{\smeasure{\upthing{E}(t)}}{\volbs}\geq 0.
\end{equation*}
\Href{Claim}{claim:svolumederivative} now yields the assertion of 
\Href{Claim}{claim:HSbigeq}.
\end{proof}

We want to have strict inequality in \eqref{eq:HSbigeq}, thus we modify 
$G$ a bit. Let  
\[
H=G+\delta I, \text{ with a small }\delta>0.
\]
By Claim \ref{claim:HSbigeq}, we have
\begin{equation}\label{eq:HSbig}
0< \iprod{(H\oplus\gamma(0),a)}{(\upthing{S},0)}.
\end{equation}
Moreover, since $\upthing{A}\ball{d+1}+a\subset\inter{\slift{f}}$, we 
can fix $\delta>0$ sufficiently small such that we also have that
\begin{equation}\label{eq:Hisadmissible}
\left((I + H) \oplus (1 + \gamma(1)) \right) \ball{d+1} + a
 \subset  \inter{\slift{f}}.
\end{equation}

\begin{claim}\label{claim:supportplane}
Set $\upthing{H}_{0}=H\oplus\gamma(0)$. Then
\begin{equation}\label{eq:supportplane}
\iprod{(\upthing{H}_{0},a)}{(\upthing{u}\otimes\upthing{u},u)}\leq 0
\end{equation}
for every contact point $\upthing{u}\in C$.
\end{claim}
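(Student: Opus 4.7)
The plan is to build a deformation of $\ball{d+1}$ that remains inside $\slift{f}$ and has linear part at $t = 0$ controlled by $(\upthing{H}_{0}, a)$, and then extract the inequality by following each contact point along this deformation. Concretely, I would introduce
\[
\widetilde{\upthing{E}}(t) = \bigg(I + t\big(H \oplus \gamma(t)\big)\bigg)\ball{d+1} + (ta, 0), \quad t \in [0, 1].
\]
At $t = 0$ this equals $\ball{d+1} \subset \slift{f}$, and at $t = 1$ it equals $\left((I + H) \oplus (1 + \gamma(1))\right)\ball{d+1} + (a, 0)$, which lies in $\inter{\slift{f}}$ by \eqref{eq:Hisadmissible}. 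Using \eqref{eq:loginvconv_same_f} together with \Href{Lemma}{lem:inclusion_ellipsoids} (applied with $f_{1} = f_{2} = f$, $s_{1} = s_{2} = s$, $\beta_{1} = 1-t$, $\beta_{2} = t$), one concludes $\widetilde{\upthing{E}}(t) \subset \slift{f}$ for every $t \in [0, 1]$.

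Next, fix a contact point $\upthing{u} = (u, w) \in C$ and track its image
\[
\upthing{v}(t) = \bigg(u + t(Hu + a),\; w\alpha^{t}\bigg) \in \widetilde{\upthing{E}}(t) \subset \slift{f}.
\]
Since $\gamma(0) = \log \alpha$, the quantity to be bounded expands as $\iprod{(\upthing{H}_{0}, a)}{(\upthing{u} \otimes \upthing{u}, u)} = \iprod{Hu}{u} + w^{2} \log\alpha + \iprod{a}{u}$. When $w \neq 0$ (so $u \in \inter{\ball{d}}$), the containment $\upthing{v}(t) \in \slift{f}$ is equivalent to $|w|\alpha^{t} \leq f^{1/s}(u + t(Hu + a))$. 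Since $\ball{d+1} \subset \slift{f}$ forces $\shf{\ball{d+1}} \leq f^{1/s}$ with equality at $u$, and since $-\log \shf{\ball{d+1}}$ is smooth at $u$ with gradient $u/w^{2}$, \Href{Lemma}{lem:touching_cond_log_concave} applied to $f_{1} = f^{1/s}$ and $f_{2} = \shf{\ball{d+1}}$ at $x_{0} = u$ yields $f^{1/s}(x) \leq |w| \exp\!\left(-\iprod{u/w^{2}}{x - u}\right)$. Combining this with the lower bound at $x = u + t(Hu + a)$, taking logarithms, and dividing by $t > 0$ produces $\iprod{Hu}{u} + w^{2} \log\alpha + \iprod{a}{u} \leq 0$, which is the desired inequality.

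The main obstacle is the remaining case $w = 0$, where $\upthing{v}(t) = (u + t(Hu + a), 0) \in \Red \subset \slift{f}$ trivially and a single orbit yields no information. To handle it, I would use the full ellipsoid containment: for any $y \in \inter{\ball{d}}$, $\widetilde{\upthing{E}}(t) \subset \slift{f}$ implies $\alpha^{t} \sqrt{1 - |y|^{2}} \leq f^{1/s}((I + tH)y + ta)$, so $(I + tH)y + ta \in \{f > 0\}$; letting $y \to u$ inside $\ball{d}$ gives $u + t(Hu + a) \in \cl{\{f > 0\}}$ for every $t \in [0, 1]$. The set $\cl{\{f > 0\}}$ is closed and convex, contains $\ball{d}$ (because $\ball{d+1} \subset \slift{f}$ forces $\inter{\ball{d}} \subset \{f > 0\}$), and has $u$ on its boundary, since otherwise the continuity of log-concave functions on the interior of their support would place $(u, 0)$ in $\inter{\slift{f}}$, contradicting $\upthing{u} \in \bd{\slift{f}}$. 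The supporting hyperplane of $\cl{\{f > 0\}}$ at $u$ must therefore coincide with that of $\ball{d}$ at $u$, namely $\{x : \iprod{u}{x} = 1\}$. Evaluating at $x = u + t(Hu + a)$ and dividing by $t > 0$ yields $\iprod{Hu}{u} + \iprod{a}{u} \leq 0$, which is the desired inequality when $w = 0$. This passage, from a vacuous orbit condition to a genuine constraint via a supporting hyperplane of the projection $\{f > 0\}$, is the subtle part of the argument, reflecting the non-convexity of $\slift{f}$.
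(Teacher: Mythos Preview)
Your proof is correct and follows essentially the same route as the paper's. Both arguments build the same interpolated family of ellipsoids via \Href{Lemma}{lem:inclusion_ellipsoids}, track the same curve $\upthing{v}(t)=\xi(t)=\upthing{u}+t(H\oplus\gamma(t))\upthing{u}+ta$, split into the cases $w\neq 0$ and $w=0$, and use \Href{Lemma}{lem:touching_cond_log_concave} for the former and the supporting hyperplane of $\cl{\{f>0\}}$ for the latter; the paper phrases the conclusion geometrically as $\iprod{\xi'(0)}{\upthing{u}}\leq 0$ where $\upthing{u}$ is the outer normal at the contact point, while you unwind the same computation explicitly via logarithms.
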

\begin{proof}
Fix an $\upthing{u}\in C$ and consider the curve $\xi: [0,1]\to\Redp; t\mapsto 
\upthing{u}+
t \big(H\oplus\gamma(t)\big)\upthing{u} +ta$ in $\Redp$.
By \Href{Lemma}{lem:inclusion_ellipsoids} and \eqref{eq:Hisadmissible}, the 
ellipsoid represented by $(\upthing{I},0)+t(H\oplus\gamma(t),a)$ is contained 
in 
$\slift{f}$ for every $t\in[0,1]$, and in particular, the curve $\xi$ is 
contained in $\slift{f}$.  By convexity and \eqref{eq:Hisadmissible}, we have 
that the projection of $\xi$ onto $\Red$ is a subset of the closure of the 
support of $f.$ Further, $\xi$ is a smooth curve and its tangent vector 
$\xi'(0)$ is given by
\[
\xi'(0) = \left.\frac{\di}{\di t}\right\vert_{t=0^{+}} 
 \left(\upthing{u}+t\big(H\oplus\gamma(t)\big)\upthing{u} +ta\right) = 
 \left.\frac{\di}{\di t}\right\vert_{t=0^{+}} 
 \left((tH\oplus (\alpha^t-1))\upthing{u} +ta\right) = (H\oplus \ln 
\alpha)\upthing{u} +a. 
\]

We consider two cases as to whether $\upthing{u} \in \Red$ or not.

First, if $\upthing{u} \in \Red,$ then $\upthing{u}$ belongs to the boundary of 
the support of $f.$  Since the support of a 
log-concave function is a convex set, we conclude that $\upthing{u}$  is the 
outer normal vector to the support of $f$ at $\upthing{u}.$ Thus, 
$\iprod{\xi'(0)}{\upthing{u}} \leq 0.$

Second, if $\upthing{u} \notin \Red,$ then  
\Href{Lemma}{lem:touching_cond_log_concave} 
implies that $\bd{\slift{f}}$ is a smooth  hypersurface in $\Redp$ 
at $\upthing{u}$, whose outer unit normal vector at $\upthing{u}$ is 
$\upthing{u}$ itself. Thus, the angle between  the tangent vector  vector 
$\xi'(0)$ of the curve $\xi$ 
and the outer normal vector of the hypersurface $\bd{\slift{f}}$ at 
$\upthing{u}$ is not acute. That is,
$\iprod{\xi'(0)}{\upthing{u}} \leq 0.$

Hence, in both cases, we have
\[
0 \geq 
\iprod{\xi'(0)}{\upthing{u}}
= 
\iprod{
 \left(H\oplus \ln \alpha)\upthing{u} +a\right)
 }{\upthing{u}},
\]
which is \eqref{eq:supportplane} completing the proof of 
\Href{Claim}{claim:supportplane}.
\end{proof}

In summary, \eqref{eq:HSbig} and \Href{Claim}{claim:supportplane} show that 
when $\left(\upthing{H}_0,a\right)$ is substituted in the place of 
$(\upthing{H},h)$, then \eqref{eq:Hseparatesweak} holds. Hence, by 
\Href{Claim}{claim:Hseparates}, the proof of 
part~\eqref{part:IsotropicToGlobal} 
of \Href{Theorem}{thm:johncond} is complete.

We rephrase \Href{Theorem}{thm:johncond} without any reference to lifting of a function to $\Redp$ as 
follows.

\begin{thm}\label{thm:johncondfunc}
Let $f$ be a proper  log-concave function on $\Red$, $s>0$.
 Assume  $\shf{\ball{d+1}}^s \leq f.$  Then the following are equivalent:
\begin{enumerate}
 \item
The function $\shf{\ball{d+1}}^s$ is the John $s$-function of $f.$
 \item
There are  points 
${u}_1,\ldots,{u}_k \in \ball{d}\subset \Red$ and 
positive weights $c_1,\ldots,c_k$ such that 
\begin{enumerate}
\item $f(u_i) = \shf{\ball{d+1}}^s(u_i)$ for all $i \in [k]$;
\item $\sum_{i=1}^k c_i {u}_i\otimes {u}_i ={I};$
\item $\sum_{i=1}^k c_i f^{1/s}({u}_i) \cdot f^{1/s}({u}_i) = s;$
\item $\sum_{i=1}^k c_i u_i =0,$
\end{enumerate}
where
${I}$ is the $d \times d$ identity matrix.
\end{enumerate}
\end{thm}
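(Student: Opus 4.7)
The plan is to derive \Href{Theorem}{thm:johncondfunc} directly from \Href{Theorem}{thm:johncond} by translating each geometric statement in $\Redp$ into the corresponding functional statement on points of $\ball{d}$. First, I would record a dictionary between the two pictures. The hypothesis $\shf{\ball{d+1}}^s \leq f$ is equivalent to $\ball{d+1} \subseteq \slift{f}$, since $(x,\xi)\in\ball{d+1}$ iff $|\xi|\leq\shf{\ball{d+1}}(x)$. The statement that $\shf{\ball{d+1}}^s$ is the John $s$-function of $f$ is equivalent to $\ball{d+1}$ being the John $s$-ellipsoid of $f$, because by the very definition of the $s$-marginal the density of $\smarg{\ball{d+1}}$ equals $\shf{\ball{d+1}}^s$. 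Finally, a point $\upthing{u}=(u,v)\in\Redp$ lies in $\bd{\ball{d+1}}\cap\bd{\slift{f}}$ iff $|u|^2+v^2=1$ and $|v|=f^{1/s}(u)$, which rewrites as $f(u)=(1-|u|^2)^{s/2}=\shf{\ball{d+1}}^s(u)$---exactly condition~(a).

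For the implication $(1)\Rightarrow(2)$, I would apply part~(1) of \Href{Theorem}{thm:johncond} to obtain contact points $\upthing{u}_j=(u_j,v_j)$ and positive weights $d_j$ with $\sum d_j\,\upthing{u}_j\otimes\upthing{u}_j = I\oplus s$ and $\sum d_j u_j=0$. Using the $d$-symmetry of both $\ball{d+1}$ and $\slift{f}$, the reflection $(u_j,-v_j)$ is also a contact point. Replacing each $\upthing{u}_j$ of weight $d_j$ by the symmetric pair $(u_j,\pm v_j)$ each of weight $d_j/2$ preserves both John identities, since the off-diagonal blocks $\pm d_j v_j u_j$ cancel and the projection sum $\sum d_j u_j$ is unchanged. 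Grouping the paired terms and letting $c_i$ denote the total weight above $u_i\in\ball{d}$, the top-left $d\times d$ block of the first John identity yields~(b), the bottom-right scalar yields $\sum c_i v_i^2 = \sum c_i f^{2/s}(u_i)=s$, which is~(c), and the centering equation becomes~(d). Degenerate contact points with $|u_i|=1$, where the pair collapses to a single point on $\Red$ with $f(u_i)=0$, are harmless: they contribute to (b) and (d) but zero to the cross blocks and to (c).

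For the converse $(2)\Rightarrow(1)$, I would reverse this construction: given $u_i\in\ball{d}$ and $c_i>0$ satisfying (a)--(d), define $(d+1)$-dimensional points $\upthing{u}_i^{\pm}=(u_i,\pm f^{1/s}(u_i))$, each with weight $c_i/2$. Condition~(a) ensures that each $\upthing{u}_i^{\pm}$ lies in $\bd{\ball{d+1}}\cap\bd{\slift{f}}$, and a direct block computation gives $\sum_{i,\epsilon}(c_i/2)\,\upthing{u}_i^{\epsilon}\otimes\upthing{u}_i^{\epsilon} = \bigl(\sum c_i\, u_i\otimes u_i\bigr)\oplus\bigl(\sum c_i f^{2/s}(u_i)\bigr) = I\oplus s$ by~(b) and~(c), while $\sum_{i,\epsilon}(c_i/2)\,u_i = \sum c_i u_i = 0$ by~(d). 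Part~(2) of \Href{Theorem}{thm:johncond} then forces $\ball{d+1}$ to be the unique John $s$-ellipsoid of $f$, whence $\shf{\ball{d+1}}^s$ is the John $s$-function. The main obstacle throughout is the careful bookkeeping in this symmetrization of weights and the block-by-block verification of the two $(d+1)$-dimensional John identities; once that accounting is in place, the proof is a direct transcription of \Href{Theorem}{thm:johncond} into functional language.
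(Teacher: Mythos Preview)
Your proposal is correct and follows exactly the approach the paper intends: the paper presents \Href{Theorem}{thm:johncondfunc} as a direct rephrasing of \Href{Theorem}{thm:johncond} and offers no separate proof, so your dictionary between contact points $\upthing{u}_i=(u_i,v_i)\in\bd{\ball{d+1}}\cap\bd{\slift{f}}$ and points $u_i\in\ball{d}$ satisfying condition~(a), together with the block-by-block reading of $\sum c_i\,\upthing{u}_i\otimes\upthing{u}_i=I\oplus s$, is precisely the translation being asserted. One minor remark: the symmetrization step in your $(1)\Rightarrow(2)$ direction is not strictly necessary, since the identity $\sum d_j\,\upthing{u}_j\otimes\upthing{u}_j=I\oplus s$ already forces the off-diagonal blocks to vanish and gives (b), (c), (d) directly upon projecting and reading the blocks; but it does no harm and makes the grouping by projection cleaner.
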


\section{Further inequalities and the limit as \tpdfs{} tends to 0}
\label{sec:furtherIneq}

\subsection{Comparison of the \tpdfs-volumes of John \tpdfs-ellipsoids for 
distinct values of \tpdfs}
\begin{lemma}\label{lem:comparison_sell_volume}
Let $f$ be a proper log-concave function on 
$\R^d$, and $0< s_1 < s_2$.
Then,
\[
\sqrt{
\left(\frac{s_2}{d + s_2}\right)^{s_2}
\left(\frac{d}{d+ s_2}\right)^{d}}
\cdot
\frac{\volbs[s_1]}{\volbs[s_2]}
  \leq
   \frac{\smeasure[s_1]{\sellbody[s_1]{f}}}{\smeasure[s_2]{\sellbody[s_2]{f}}}
   \leq
   \frac{\volbs[s_1]}{\volbs[s_2]}.
\] 

\end{lemma}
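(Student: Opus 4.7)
The plan is to prove each inequality by constructing, from the John $s$-ellipsoid for one parameter, an explicit admissible ellipsoid for the other. Denote the representation of $\sellbody[s_i]{f}$ by $(A_i\oplus\alpha_i,a_i)$ for $i=1,2$, so that by~\eqref{eq:sfunc_of_ellipsoida} we have $\smeasure[s_i]{\sellbody[s_i]{f}}=\volbs[s_i]\alpha_i^{s_i}\det A_i$.

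For the \emph{upper bound}, I exploit the facts that $\shf{\ball{d+1}}\in[0,1]$ and $s_1\le s_2$. Set $\upthing{F}:=(A_1\oplus\alpha_1^{s_1/s_2})\ball{d+1}+a_1$. Writing $\tilde h(x):=\shf{\ball{d+1}}(A_1^{-1}(x-a_1))\in[0,1]$, one has $\shf{\upthing{F}}(x)^{s_2}=\alpha_1^{s_1}\tilde h(x)^{s_2}\le\alpha_1^{s_1}\tilde h(x)^{s_1}=\shf{\sellbody[s_1]{f}}(x)^{s_1}\le f(x)$, so $\upthing{F}\subseteq\slift[s_2]{f}$. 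Hence $\smeasure[s_2]{\upthing{F}}=\volbs[s_2]\alpha_1^{s_1}\det A_1=(\volbs[s_2]/\volbs[s_1])\smeasure[s_1]{\sellbody[s_1]{f}}$, and the maximality of $\sellbody[s_2]{f}$ yields the upper bound.

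For the \emph{lower bound}, I build admissible $s_1$-candidates from $\sellbody[s_2]{f}$ by a two-parameter scaling. For $\rho\in(0,1]$ and $\beta>0$, define $\upthing{F}(\rho,\beta):=(\rho A_2\oplus\beta)\ball{d+1}+a_2$. The inclusion $\shf{\sellbody[s_2]{f}}^{s_2}\le f$ gives, with $y=A_2^{-1}(x-a_2)$, that $\alpha_2^{s_2}(1-|y|^2)^{s_2/2}\le f(A_2 y+a_2)$ for $|y|\le 1$. So to ensure $\upthing{F}(\rho,\beta)\subseteq\slift[s_1]{f}$ it suffices to impose $\beta^{s_1}(1-|y|^2/\rho^2)^{s_1/2}\le\alpha_2^{s_2}(1-|y|^2)^{s_2/2}$ for all $|y|\le\rho$, i.e. for $t=|y|^2\in[0,\rho^2]$,
\[\beta^{s_1}\le\alpha_2^{s_2}\rho^{s_1}\cdot\frac{(1-t)^{s_2/2}}{(\rho^2-t)^{s_1/2}}.\]

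Two successive univariate optimizations finish the argument. Differentiating the logarithm of the right-hand side in $t$ shows the minimum is attained at $t^\ast=(s_2\rho^2-s_1)/(s_2-s_1)$ (valid once $\rho^2\ge s_1/s_2$), which after direct substitution gives the largest admissible $\beta^{s_1}=\alpha_2^{s_2}\rho^{s_1}(1-\rho^2)^{(s_2-s_1)/2}\,s_2^{s_2/2}/\bigl(s_1^{s_1/2}(s_2-s_1)^{(s_2-s_1)/2}\bigr)$. Next, since $\smeasure[s_1]{\upthing{F}(\rho,\beta)}=\volbs[s_1]\beta^{s_1}\rho^d\det A_2$, maximizing the $\rho$-dependence $\rho^{d+s_1}(1-\rho^2)^{(s_2-s_1)/2}$ yields $\rho^2=(d+s_1)/(d+s_2)$, $1-\rho^2=(s_2-s_1)/(d+s_2)$. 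Substituting, collecting factors and using $\smeasure[s_2]{\sellbody[s_2]{f}}=\volbs[s_2]\alpha_2^{s_2}\det A_2$ gives
\[\frac{\smeasure[s_1]{\sellbody[s_1]{f}}}{\smeasure[s_2]{\sellbody[s_2]{f}}}\ge\frac{\volbs[s_1]}{\volbs[s_2]}\cdot\frac{(d+s_1)^{(d+s_1)/2}\,s_2^{s_2/2}}{s_1^{s_1/2}\,(d+s_2)^{(d+s_2)/2}}.\]
The last step is the elementary inequality $(d+s_1)^{d+s_1}\ge s_1^{s_1}d^d$ (trivial since $d+s_1\ge\max(s_1,d)$), which upgrades $(d+s_1)^{(d+s_1)/2}/s_1^{s_1/2}$ to $\ge d^{d/2}$ and delivers the claimed factor $\sqrt{(s_2/(d+s_2))^{s_2}(d/(d+s_2))^d}$. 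The main obstacle is the pair of univariate optimizations and careful exponent bookkeeping; the upper bound is essentially a one-line construction exploiting monotonicity of $h\mapsto h^s$ on $[0,1]$.
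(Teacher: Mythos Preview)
Your proof is correct. The upper bound is identical in spirit to the paper's (the paper normalizes so that $\sellbody[s_1]{f}=\ball{d+1}$, but the content is the same monotonicity $h^{s_2}\le h^{s_1}$ on $[0,1]$).

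For the lower bound, your route differs from the paper's. The paper uses a simpler one-parameter construction: from $\sellbody[s_2]{f}=\ball{d+1}$ it observes that the \emph{cylinder} $\rho\ball{d}\times[-\sqrt{1-\rho^2},\sqrt{1-\rho^2}]$ lies in $\slift[s_2]{f}$, hence the ellipsoid $(\rho I\oplus(1-\rho^2)^{s_2/(2s_1)},0)$ lies in $\slift[s_1]{f}$, and optimizes only over $\rho$. You instead compare height functions directly and optimize over the full two-parameter family $(\rho,\beta)$. The cost is an extra calculus optimization in $t$; the gain is that your intermediate inequality
\[
\frac{\smeasure[s_1]{\sellbody[s_1]{f}}}{\smeasure[s_2]{\sellbody[s_2]{f}}}\ \ge\ \frac{\volbs[s_1]}{\volbs[s_2]}\cdot\frac{(d+s_1)^{(d+s_1)/2}\,s_2^{s_2/2}}{s_1^{s_1/2}\,(d+s_2)^{(d+s_2)/2}}
\]
is genuinely sharper than the paper's stated bound (you then throw away this improvement via $(d+s_1)^{d+s_1}\ge d^d s_1^{s_1}$ to match the lemma). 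One small point worth making explicit: your choice $\rho^2=(d+s_1)/(d+s_2)$ indeed satisfies the constraint $\rho^2\ge s_1/s_2$ needed for the $t$-optimization, since $s_2(d+s_1)-s_1(d+s_2)=d(s_2-s_1)\ge 0$; you assert this implicitly but it is what makes the two-stage optimization consistent.
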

\begin{proof}
We start with the second inequality.
We may assume that $\sellbody[s_1]{f} = \ball{d+1}$, and hence,
its height function is 
$\shf{\sellbody[s_1]{f}} (x)= \sqrt{1 - |x|^2}$ 
for $x \in \ball{d}.$ 
Since $s_1< s_2$ and $\shf{\sellbody[s_1]{f}} (x) \le 1,$ we have
\[
\left(\shf{\sellbody[s_1]{f}} (x) \right)^{s_2} \le
\left( \shf{\sellbody[s_1]{f}} (x) \right)^{s_1}
 \le f(x) \quad \text{ for all } x \in \ball{d}.
\]
That is, by \eqref{eq:height_relation}, $ \ball{d+1} \subset 
\slift[s_2]{f}$, which yields
$\smeasure[s_2]{\ball{d+1}}\leq 
\smeasure[s_2]{\sellbody[s_2]{f}}$. Hence,
\[
   \frac{\smeasure[s_1]{\sellbody[s_1]{f}}}{\smeasure[s_2]{\sellbody[s_2]{f}}} 
\leq
\frac{\smeasure[s_1]{\ball{d+1}}} 
{\smeasure[s_2]{\ball{d+1}}} 
{=}
\frac{\volbs[s_1]}{\volbs[s_2]}.
\]
{Next, we prove the first} inequality of the assertion of the lemma.
Now, we assume that $\sellbody[s_2]{f} = \ball{d+1}.$
Therefore, for any ${\rho} \in (0,1)$, we have that $\slift[s_2]{f}$ 
contains the cylinder $\rho \ball{d} \times [-\sqrt{1-\rho^2},\sqrt{1-\rho^2}].$ 
Hence, 
$\slift[s_1] {f}$ contains the ellipsoid $\upthing{E},$ represented by  
$\left(\rho I \oplus \left(\sqrt{1-\rho^2}\right)^{s_2 /s_1}, 0 \right)$, whose 
$s_1$-volume by \eqref{eq:sfunc_of_ellipsoida} is $\volbs[s_1] \cdot \rho^d  
\cdot \left(1-\rho^2\right)^{s_2 / 2}.$ Choosing $\rho = 
\sqrt{\frac{d}{d+s_2}},$  we obtain
\[
\sqrt{\left(\frac{s_2}{d + s_2}\right)^{s_2}\left(\frac{d}{d+ s_2}\right)^{d}} 
\cdot 
\frac{\volbs[s_1]}{\volbs[s_2]} 
{=} 
\frac{\smeasure[s_1]{\upthing{E}}}{\smeasure[s_2]{\ball{d+1}}} \leq
   \frac{\smeasure[s_1]{\sellbody[s_1]{f}}}{\smeasure[s_2]{\sellbody[s_2]{f}}}.
\]
\end{proof}

\subsection{Stability of the John \tpdfs-ellipsoid}

\begin{lemma}
\label{lem:stability_volume_s-ellipsoid}
Fix a dimension $d$ and a positive constant $C > 0.$ 
Then there exist  constants $\epsilon_C > 0$ and $k_C > 0$ with the following 
property.  Let $s \in (0, \infty),$
$\epsilon \in [0, \epsilon_C]$ and $f$ be a proper log-concave function on 
$\Red$, whose \jsellipsoid{} $\sellbody{f}$ is represented by    
$(A_1 \oplus \alpha_1, a_1)$,  and let $\upthing{E}_2$ denote another 
ellipsoid, represented by $(A_2 \oplus \alpha_2, a_2)$, with $\upthing{E}_2 
\subset 
\slift{f}$. Assume that
\begin{equation}
\label{eq:lem_stability_C_bound}
 \smeasure{\sellbody{f}} \ge C - \epsilon 
 \quad \text{and} \quad
    \smeasure{\sellbody{f}}  \ge \smeasure{\upthing{E}_2} \ge  
\smeasure{\sellbody{f}} - \epsilon.
\end{equation}
Then
\begin{equation}
\label{eq:lem_stability_theta_bound}
\norm{\frac{A_1}{\norm{A_1}} - \frac{A_2}{\norm{A_2}}} + \frac{|\alpha_1^s - 
\alpha_2^s|}{\norm{f}} + \frac{|a_1 - a_2|}{\norm{A_1} \cdot \norm{f}} \le k_C 
\sqrt{\epsilon}.
\end{equation}
\end{lemma}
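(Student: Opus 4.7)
The plan is to combine the interpolation tools of Section~\ref{sec:interpolation} with a stable form of Minkowski's determinant inequality, and to handle the translation via the sausage construction of Lemma~\ref{lem:interpolation_translates}. First, by \eqref{eq:loginvconv_same_f} with $\lambda=1/2$ and Lemma~\ref{lem:inclusion_ellipsoids}, the interpolated ellipsoid $\upthing{E}$ represented by $\bigl(\tfrac{A_1+A_2}{2}\oplus\sqrt{\alpha_1\alpha_2},\tfrac{a_1+a_2}{2}\bigr)$ lies in $\slift{f}$, and Lemma~\ref{lem:minkowski_for_s_ellipsoids} gives $\smeasure{\upthing{E}}\ge\sqrt{\smeasure{\upthing{E}_1}\smeasure{\upthing{E}_2}}$. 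Since $\upthing{E}_1=\sellbody{f}$ is the global maximizer, $\smeasure{\upthing{E}_1}\ge\smeasure{\upthing{E}}$, so for $\epsilon_C$ small enough to guarantee $\smeasure{\upthing{E}_2}\ge C/2$, the two-sided squeeze together with \eqref{eq:lem_stability_C_bound} yields
\[
0\le \smeasure{\upthing{E}}-\sqrt{\smeasure{\upthing{E}_1}\smeasure{\upthing{E}_2}}\le \smeasure{\upthing{E}_1}-\sqrt{\smeasure{\upthing{E}_1}\smeasure{\upthing{E}_2}}\le c_1(C)\,\epsilon.
\]
Expanding the left-hand side using \eqref{eq:sfunc_of_ellipsoida} and bounding the prefactor $\volbs(\alpha_1\alpha_2)^{s/2}$ from below via Lemma~\ref{lem:height} and Lemma~\ref{lem:boundedness} (legitimate for both ellipsoids since $\smeasure{\upthing{E}_2}\ge C/2$), this becomes a bound of order $\epsilon$ on the matricial Minkowski deficit $\det\!\bigl(\tfrac{A_1+A_2}{2}\bigr)-\sqrt{\det A_1\det A_2}$.

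Next, I would convert this deficit into a bound on $A_1-A_2$ via a stable form of Minkowski's determinant inequality. Writing $B=A_1^{-1/2}A_2A_1^{-1/2}$, a second-order Taylor expansion of $\det\!\bigl(\tfrac{I+B}{2}\bigr)-\sqrt{\det B}$ about $B=I$ shows that this quantity dominates $c_d\norm{B-I}^2$ on any bounded neighborhood of $I$, the relevant neighborhood being supplied by the uniform spectral bounds on $A_1,A_2$ from Lemma~\ref{lem:boundedness}. This forces $\norm{B-I}=O(\sqrt\epsilon)$, which via a routine calculation yields $\norm{A_1/\norm{A_1}-A_2/\norm{A_2}}\le k'_C\sqrt\epsilon$. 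For the height, the identity $\alpha_i^s\det A_i=\smeasure{\upthing{E}_i}/\volbs$ combined with $|\smeasure{\upthing{E}_1}-\smeasure{\upthing{E}_2}|\le\epsilon$ and the determinant estimate $|\det A_1-\det A_2|=O(\sqrt\epsilon)$ from the previous sentence gives $|\alpha_1^s-\alpha_2^s|=O(\sqrt\epsilon)$; dividing by $\norm{f}$, which is comparable to $\alpha_1^s$ by Lemma~\ref{lem:height} and Lemma~\ref{lem:boundedness}, produces the middle term of \eqref{eq:lem_stability_theta_bound}.

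The estimate on the translation $a_2-a_1$ is the main obstacle, since the centers enter the interpolation only through their symmetric average. The plan is to reduce to Lemma~\ref{lem:interpolation_translates} by a controlled contraction: set $t=1-K\sqrt\epsilon$ with $K=K(C,d,s)$ chosen so that the estimates of the previous paragraph imply $tA_1\preceq A_2$ and $t\alpha_1\le\alpha_2$. Then the two shifted ellipsoids $(tA_1\oplus t\alpha_1,a_1)$ and $(tA_1\oplus t\alpha_1,a_2)$ are contained in $\upthing{E}_1\subset\slift{f}$ and $\upthing{E}_2\subset\slift{f}$ respectively. After translating and rotating so that their centers are symmetric about the origin along $e_1$, Lemma~\ref{lem:interpolation_translates} produces an ellipsoid of $s$-volume
\[
t^{s+d}\Bigl(1+\tfrac{1}{2t}\bigl|A_1^{-1}(a_2-a_1)\bigr|\Bigr)\smeasure{\upthing{E}_1},
\]
which cannot exceed $\smeasure{\upthing{E}_1}$ by maximality. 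Rearranging gives $|A_1^{-1}(a_2-a_1)|\le 2t\bigl(t^{-(s+d)}-1\bigr)=O(\sqrt\epsilon)$, hence $|a_2-a_1|\le\norm{A_1}\cdot O(\sqrt\epsilon)$. Dividing by $\norm{A_1}\norm{f}$ and using the lower bound on $\norm{f}$ supplied by Lemma~\ref{lem:boundedness} yields the final term of \eqref{eq:lem_stability_theta_bound}. The technical crux is choosing $K$ large enough relative to the constants from the previous paragraph so that the dilation loss of $t^{s+d}$ is still compensated by the sausage gain in such a way that the $\sqrt\epsilon$ rate is preserved.
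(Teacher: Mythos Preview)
Your overall strategy matches the paper's proof: interpolate via Lemma~\ref{lem:inclusion_ellipsoids}, extract a stable Minkowski determinant inequality to control $A_1$ versus $A_2$, deduce the height estimate from \eqref{eq:sfunc_of_ellipsoida}, and handle the centers through the sausage construction of Lemma~\ref{lem:interpolation_translates}. The gap is in the bookkeeping of constants, which here is the whole point: $\epsilon_C$ and $k_C$ must depend only on $d$ and $C$, uniformly in $s$ and in $f$.

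Two places where this uniformity breaks down in your write-up. First, Lemma~\ref{lem:boundedness} produces constants $\vartheta,\rho,\rho_1,\rho_2$ that depend on the particular function $f$, so you cannot invoke it to bound the spectrum of $B=A_1^{-1/2}A_2A_1^{-1/2}$ or to bound $\norm{f}$ from below. The paper sidesteps this by working with the \emph{ratio} $\smeasure{\upthing{E}}/\sqrt{\smeasure{\upthing{E}_1}\smeasure{\upthing{E}_2}}=2^{-d}\det(A_1+A_2)/\sqrt{\det A_1\det A_2}$, in which all the $f$- and $s$-dependent prefactors cancel; the a~priori confinement of the eigenvalues $\beta_i$ then comes for free from the coercivity of $\beta\mapsto(1+\beta)/(2\sqrt\beta)$, not from Lemma~\ref{lem:boundedness}.

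Second, and more seriously, in the translation step you dilate both the matrix and the height by the same factor $t=1-K\sqrt\epsilon$. The resulting $s$-volume loss is $t^{s+d}$, so your bound $|A_1^{-1}(a_2-a_1)|\le 2t\bigl(t^{-(s+d)}-1\bigr)\approx 2(s+d)K\sqrt\epsilon$ carries an explicit factor of $s$, and indeed you already wrote $K=K(C,d,s)$. The paper's fix is to shrink the height not by $t$ but by $(1-k_0\norm{f}\sqrt\epsilon)^{1/s}$, so that after raising to the $s$-th power the height contribution to the $s$-volume loss is exactly $(1-k_0\norm{f}\sqrt\epsilon)$, independent of $s$. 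The comparison with $\smeasure{\sellbody{f}}$ then yields $|a_1-a_2|/\norm{A_1}\le k\,\norm{f}\sqrt\epsilon$ with $k=k(C,d)$, and dividing by $\norm{f}$ gives the third term of \eqref{eq:lem_stability_theta_bound} directly, with no need for any lower bound on $\norm{f}$.
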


In this subsection, we prove \Href{Lemma}{lem:stability_volume_s-ellipsoid}.

Let 
$\upthing{E}$ denote the ellipsoid  represented by
\[
\left(\frac{A_1+A_2}{2} \oplus \sqrt{\alpha_1 \alpha_2}, \frac{a_1+a_2}{2} 
\right).
\]
By Lemma \ref{lem:inclusion_ellipsoids}, $\upthing{E} \subset \slift{f},$
and, therefore, $\smeasure{\sellbody{f}} \ge \smeasure{\upthing{E}}.$

\begin{claim}\label{claim:stability_norm}
There are 
constants $\epsilon_0 > 0$ and $k_0 > 0$ such that 
if the ellipsoids $\sellbody{f}$ and $\upthing{E_2}$ satisfy 
\eqref{eq:lem_stability_C_bound} for  $\epsilon \in [0,\epsilon_0],$ then 
\begin{equation}\label{eq:stability_operator_multi}
(1 - k_0 \sqrt{\epsilon}) A_1 \prec A_2  \prec (1 + k_0 \sqrt{\epsilon}) A_1,
\end{equation}
and
\[\norm{\frac{A_1}{\norm{A_1}} - \frac{A_2}{\norm{A_2}}} \leq k_0 
\sqrt{\epsilon}
\quad \text{and} \quad
1 - k_0 \sqrt{\epsilon} \leq \frac{\det A_1}{\det A_2} \leq 
1 + k_0 \sqrt{\epsilon}.
\]
\end{claim}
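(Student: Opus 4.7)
\textbf{Proof plan for \Href{Claim}{claim:stability_norm}.}
The natural handle is the interpolated ellipsoid $\upthing{E}$ defined immediately before the claim: by \Href{Lemma}{lem:inclusion_ellipsoids}, $\upthing{E} \subseteq \slift{f}$, so $\smeasure{\upthing{E}} \leq \smeasure{\sellbody{f}}$ by the $s$-maximality of $\sellbody{f}$. Using \eqref{eq:sfunc_of_ellipsoida}, its $s$-volume factors cleanly as
\[
\smeasure{\upthing{E}} \;=\; \mu \cdot \sqrt{\smeasure{\sellbody{f}}\cdot \smeasure{\upthing{E}_2}},
\qquad
\mu \;:=\; \frac{\det\!\left(\tfrac{A_1+A_2}{2}\right)}{\sqrt{\det A_1 \cdot \det A_2}}.
\]
Minkowski's determinant inequality \eqref{eq:minkowski_det_multipl_ineq} gives $\mu \geq 1$. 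Combining $\smeasure{\upthing{E}} \leq \smeasure{\sellbody{f}}$ with the bound $\smeasure{\upthing{E}_2} \geq C - 2\epsilon$ (which follows from the two inequalities in \eqref{eq:lem_stability_C_bound}) and choosing $\epsilon_C \leq C/4$ yields $\mu^2 \leq \smeasure{\sellbody{f}}/\smeasure{\upthing{E}_2} \leq 1 + 2\epsilon/C$.

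To convert this near-equality in Minkowski's inequality into matrix stability, I would simultaneously diagonalize $A_1$ and $A_2$: pick an invertible $T$ with $T^{\intercal}A_1 T = I$ and $T^{\intercal}A_2 T = D := \diag(\lambda_1,\ldots,\lambda_d)$, $\lambda_i > 0$. A direct computation gives
\[
\mu^2 \;=\; \prod_{i=1}^{d} \frac{(1+\lambda_i)^2}{4\lambda_i} \;=\; \prod_{i=1}^{d} \left(1 + \frac{(1-\lambda_i)^2}{4\lambda_i}\right),
\]
each factor being $\geq 1$ by AM-GM (with equality iff $\lambda_i = 1$). Since the product lies in $[1,\,1+2\epsilon/C]$ and every factor is $\geq 1$, each factor itself lies in that interval, yielding $(1-\lambda_i)^2 \leq 8\lambda_i\epsilon/C$ for every $i$. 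This self-contained quadratic inequality has, for $\epsilon_C$ small enough in terms of $C$, solution set contained in $[1 - k_0\sqrt{\epsilon},\,1 + k_0\sqrt{\epsilon}]$ with $k_0$ depending only on $C$.

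Translating back via $A_2 - \beta A_1 = T^{-\intercal}(D - \beta I)T^{-1}$ immediately gives the sandwich inequality \eqref{eq:stability_operator_multi}. The determinant ratio $\det A_2/\det A_1 = \prod \lambda_i$ then lies in $[(1-k_0\sqrt{\epsilon})^d,\,(1+k_0\sqrt{\epsilon})^d] \subseteq [1-k_1\sqrt{\epsilon},\,1+k_1\sqrt{\epsilon}]$ for a $k_1$ depending on $C$ and $d$. For the normalized-norm estimate, one writes
\[
\frac{A_1}{\norm{A_1}} - \frac{A_2}{\norm{A_2}} \;=\; \frac{A_1-A_2}{\norm{A_1}} + A_2 \cdot \frac{\norm{A_2}-\norm{A_1}}{\norm{A_1}\norm{A_2}},
\]
and combines the triangle inequality with the fact that \eqref{eq:stability_operator_multi} forces $\norm{A_2}/\norm{A_1} \in [1-k_0\sqrt{\epsilon},\,1+k_0\sqrt{\epsilon}]$ (via Rayleigh quotients at the top eigenvectors of $A_1$ and $A_2$). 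The main delicacy is the second paragraph: one must check that the quadratic $(1-\lambda)^2 \leq 8\lambda\epsilon/C$ really is self-contained, i.e., its solution set is a bounded interval around $1$, and this is precisely what dictates the choice of $\epsilon_C$ in terms of $C$.
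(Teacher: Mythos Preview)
Your proposal is correct and follows essentially the same route as the paper's proof: both use the interpolated ellipsoid $\upthing{E}$, bound the ratio $\det\!\left(\tfrac{A_1+A_2}{2}\right)/\sqrt{\det A_1\det A_2}$ by $1+O(\epsilon)$ via the maximality of $\sellbody{f}$ and \eqref{eq:lem_stability_C_bound}, simultaneously diagonalize $A_1,A_2$, and extract $\lambda_i\in[1-O(\sqrt{\epsilon}),1+O(\sqrt{\epsilon})]$ from the resulting quadratic inequality; the remaining estimates are then immediate. The only cosmetic difference is that you work with $\mu^2=\prod\frac{(1+\lambda_i)^2}{4\lambda_i}$ and explicit constants in $C$, whereas the paper works with $\mu=\prod\frac{1+\beta_i}{2\sqrt{\beta_i}}$ and unspecified constants.
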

\begin{proof}
By \eqref{eq:sfunc_of_ellipsoida}, we have
\[
\frac{\smeasure{\upthing{E}}}{\sqrt{\smeasure{\sellbody{f}}\smeasure{\upthing{E
}_2}}} = \frac{1}{2^d}\frac{\det (A_1 + A_2)}{\sqrt{\det A_1 \det A_2}}.
\]
Since $\smeasure{\sellbody{f}} \ge \smeasure{\upthing{E}}$  and by 
\eqref{eq:lem_stability_C_bound}, 
there exist 
$\epsilon_1 > 0$ and $k_1 > 0$ such that the left-hand side in the equation 
above is at most 
$1 + k_1 \cdot  \epsilon$ for all $\epsilon \in [0, \epsilon_1].$
 Therefore, we have that
 \begin{equation}
 \label{eq:det_stupid_bound}
 1 + k_1 \cdot  \epsilon \ge \frac{1}{2^d}\frac{\det (A_1 + A_2)}{\sqrt{\det 
A_1 \det A_2}}.
 \end{equation}
Let $R$ be the square root of $A_1$, and $U$ be the orthogonal transformation 
that diagonalizes $R^{-1}A_2R^{-1},$ that is, the matrix $D = U R^{-1}A_2R^{-1} 
U^T$ is diagonal. Let $D = \operatorname{diag}({\beta_1, \dots, 
\beta_d}).$ Then for $S = 
UR^{-1},$ we have 
$S A_1 S^T = I, S A_2 S^T = D.$ By the multiplicativity of the 
determinant,
inequality \eqref{eq:det_stupid_bound} is equivalent to
\[
1 + k_1 \cdot  \epsilon \ge \prod\limits_{1}^{d}\frac{1 + \beta_i}{2 
\sqrt{\beta_i}}.
\]
Since $1 + \beta \ge 2 \sqrt{\beta}$ for any $\beta>0$, 
this implies that 
\[
1 + k_1 \cdot  \epsilon \ge \frac{1 + \beta_i}{2 \sqrt{\beta_i}}
\]
for every $i \in [d]$.
If we consider the above formula as a quadratic inequality in the variable 
$\sqrt{\beta_i}$, then, by the quadratic formula, we obtain that there exist 
positive constants
$k_2$ and $\epsilon_2$ such that the inequality  
\begin{equation}\label{eq:betabound}
1 - k_2 \sqrt{\epsilon}  \leq \beta_i \leq 1 +k_2 \sqrt{\epsilon} 
\end{equation}
holds for all $\epsilon \in [0, \epsilon_2].$

{Clearly, $\frac{\det A_1}{\det A_2}=1/\prod_{i=1}^d \beta_i$ and hence,}
the estimate on $\frac{\det A_1}{\det A_2}$ follows from \eqref{eq:betabound}.

On the other hand, \eqref{eq:betabound} yields also that 
\[(1 - k_2 \sqrt{\epsilon}) I \prec {S}A_2 {S^T} \prec (1 + k_2 
\sqrt{\epsilon}) I.
\]
Thus, \eqref{eq:stability_operator_multi} follows.
Hence, there exist  positive constants $k_3$ and $\epsilon_3$ such that the 
inequality
\[
\left|\frac{\norm{A_2}}{\norm{A_1}} - 1 \right| \leq 
k_3 \sqrt{\epsilon}
\] 
holds for all $\epsilon \in [0, \epsilon_3].$
This and \eqref{eq:stability_operator_multi} yield that
\[
\norm{\frac{A_1}{\norm{A_1}} - \frac{A_2}{\norm{A_2}}} \leq 
\norm{\frac{A_1}{\norm{A_1}} - \frac{A_2}{\norm{A_1}}} + 
\norm{\frac{A_2}{\norm{A_1}} - \frac{A_2}{\norm{A_2}}} \leq (k_2 + k_3) 
\sqrt{\epsilon}
\]
for all $\epsilon \in [0, \min\{\epsilon_2, \epsilon_3\}].$
This completes the proof of Claim~\ref{claim:stability_norm}.
\end{proof}

\begin{claim}\label{claim:stability_height}
There are constants $\epsilon_0 > 0$ and $k_0 > 0$ such that 
if the ellipsoids $\sellbody{f}$ and $\upthing{E_2}$ satisfy 
\eqref{eq:lem_stability_C_bound} for  $\epsilon \in [0,\epsilon_0],$ then
\[|\alpha_1^s - \alpha_2^s| \leq \norm{f} k_0 \sqrt{\epsilon}.
\]
\end{claim}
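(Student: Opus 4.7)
The plan is to derive the height bound directly from the $s$-volume formula \eqref{eq:sfunc_of_ellipsoida} combined with the determinant ratio estimate just established in Claim~\ref{claim:stability_norm}. Since $\smeasure{\upthing{E}_i} = \volbs \alpha_i^s \det A_i$ for $i = 1, 2$, the hypothesis \eqref{eq:lem_stability_C_bound} translates into
\[
0 \le \alpha_1^s \det A_1 - \alpha_2^s \det A_2 \le \frac{\epsilon}{\volbs}.
\]

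Next, I would apply the standard telescoping identity
\[
\alpha_1^s \det A_1 - \alpha_2^s \det A_2 = (\alpha_1^s - \alpha_2^s)\, \det A_2 + \alpha_1^s\, (\det A_1 - \det A_2),
\]
from which
\[
|\alpha_1^s - \alpha_2^s|\, \det A_2 \le \frac{\epsilon}{\volbs} + \alpha_1^s\, |\det A_1 - \det A_2|.
\]
By Claim~\ref{claim:stability_norm}, for all sufficiently small $\epsilon$ one has $|\det A_1 - \det A_2| \le k\sqrt{\epsilon}\, \det A_2$ for some constant $k$, and since $\sellbody{f} \subset \slift{f}$ forces $\alpha_1 \le \norm{f}^{1/s}$, we get $\alpha_1^s \le \norm{f}$. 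Hence
\[
|\alpha_1^s - \alpha_2^s| \le \frac{\epsilon}{\volbs \det A_2} + \norm{f}\, k\sqrt{\epsilon}.
\]

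It remains to bound $\epsilon/(\volbs \det A_2)$ from above. For that, I would use $\smeasure{\upthing{E}_2} \ge C - 2\epsilon$ (which follows from \eqref{eq:lem_stability_C_bound}) together with $\alpha_2^s \le \norm{f}$, yielding $\volbs \det A_2 \ge (C - 2\epsilon)/\norm{f}$, and therefore $\epsilon/(\volbs \det A_2) \le \epsilon \norm{f}/(C - 2\epsilon) \le \norm{f}\, k'\sqrt{\epsilon}$ once $\epsilon$ is small enough relative to $C$, with $k'$ depending only on $C$. Combining these estimates gives $|\alpha_1^s - \alpha_2^s| \le \norm{f}\, k_0 \sqrt{\epsilon}$ for a suitable constant $k_0$ depending only on $C$ and $d$. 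There is no real obstacle: the argument is essentially bookkeeping, and the only nontrivial input is the determinant bound from Claim~\ref{claim:stability_norm}. The one point to be careful about is that the lower bound on $\det A_2$ must be extracted from the absolute $s$-volume hypothesis $\smeasure{\sellbody{f}} \ge C - \epsilon$, not from the mere containment $\upthing{E}_2 \subset \slift{f}$.
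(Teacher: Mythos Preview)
Your proof is correct and follows essentially the same approach as the paper: both arguments combine the $s$-volume formula $\smeasure{\upthing{E}_i}=\volbs\alpha_i^s\det A_i$ with the determinant ratio bound from Claim~\ref{claim:stability_norm} and the trivial inequality $\alpha_i^s\le\norm{f}$. The only cosmetic difference is that the paper works multiplicatively with the ratio $\smeasure{\upthing{E}_2}/\smeasure{\sellbody{f}}$ (so $\volbs$ cancels immediately), whereas you work additively with the difference and then separately bound $\volbs\det A_2$ from below via the hypothesis $\smeasure{\upthing{E}_2}\ge C-2\epsilon$.
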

\begin{proof}
By identity \eqref{eq:sfunc_of_ellipsoida} and the inequalities 
\eqref{eq:lem_stability_C_bound},
we have that
\[
1 \ge 
\frac{\smeasure{\upthing{E}_2}}{\smeasure{\sellbody{f}}} = 
\frac{\det A_2 \cdot \alpha_2^s}{ \det A_1 \cdot \alpha_1^s} \ge 1 - 
\frac{\epsilon}{\smeasure{\sellbody{f}}}.
\]
By this and by \Href{Claim}{claim:stability_norm}, 
we get the following inequality
\[
\left(1 + k_1 \sqrt{\epsilon} \right) \alpha_1^s \ge \alpha_2^s \ge 
\left(1 - k_1 \sqrt{\epsilon}\right) \alpha_1^s
\]
for all $\epsilon \in [0, \epsilon_1],$ where 
$k_1$ and $\epsilon_1$ are some positive constants.
Equivalently, we have that

\[
k_1 \sqrt{\epsilon} \cdot \frac{ \alpha_1^s}{\norm{f}} \ge \frac{\alpha_2^s - 
\alpha_1^s}{\norm{f}} \ge 
 -k_1 \sqrt{\epsilon} \cdot \frac{\alpha_1^s}{\norm{f}}.
\]
The claim follows since $ \frac{\alpha_1^s}{\norm{f}}  \leq 1.$
\end{proof}
To complete the proof of \Href{Lemma}{lem:stability_volume_s-ellipsoid}, we 
need 
to show that $a_1$ and $a_2$ are close.
By translating the origin and rotating the space $\Red$, we may assume that 
$a_1  = - 
a_2 \neq 0$ and that $A_1^{-1} a_1 =\delta e_1$ for some $\delta>0$.
Consider the origin centered \dsymm{} ellipsoid 
\[
\upthing{E}_0 =  (A_1 \oplus {\alpha_1})  \upthing{M} \ball{d+1}, \quad 
\text{where} \;\; 
\upthing{M} = \diag(1+\delta, 1, \dots, 1).
\]
Clearly, 
$\smeasure{\upthing{E}_0}= \smeasure{\sellbody{f}}
\left(1 + \frac{|A_1^{-1}(a_1 - a_2)|}{2}\right) \geq 
\smeasure{\sellbody{f}}
\left(1 + \frac{|(a_1 - a_2)|}{2 \norm{A_1}}\right)$.

{
By \eqref{eq:stability_operator_multi} and 
\Href{Claim}{claim:stability_height}, we have 
\[
\left((1 - k_0 \sqrt{\epsilon}) A_1 
\oplus 
(1 - k_0 \norm{f}\sqrt{\epsilon})^{1/s} 
\alpha_1\right)\ball{d+1} + a_2\subseteq \slift{f}.
\]
On the other hand, clearly,
\[
\left((1 - k_0 \sqrt{\epsilon}) A_1 
\oplus 
(1 - k_0 \norm{f}\sqrt{\epsilon})^{1/s} 
\alpha_1\right)\ball{d+1} + a_1\subseteq
\left(A_1 
\oplus \alpha_1\right) \ball{d+1} + a_1\subseteq \slift{f}.
\]

Thus, by \Href{Lemma}{lem:interpolation_translates},
\[
\left((1 - k_0 \sqrt{\epsilon}) A_1 
\oplus 
(1 - k_0 \norm{f}\sqrt{\epsilon})^{1/s} 
{\alpha_1}\right)\upthing{M} \ball{d+1}  
\]
is contained in $\slift{f}$.

}

By \eqref{eq:sfunc_of_ellipsoida}, the $s$-volume of this ellipsoid is 
 \[ \smeasure{\sellbody{f}} \left( 1 - k_0 \norm{f}\sqrt{\epsilon} \right) 
\left( 1 - k_0 \sqrt{\epsilon} \right)^{d} \left(1 + \frac{|(a_1 - a_2)|}{2 
\norm{A_1}}\right) \leq \smeasure{\sellbody{f}}.
 \]
 Thus, there exist constants $\epsilon_1, k_1 > 0$ such that 
  $\frac{|a_1 - a_2|}{\norm{A_1}} \le k_1 \norm{f} \sqrt{\epsilon}$ for any 
$\epsilon \in [0, \epsilon_1].$  From this and Claims 
\ref{claim:stability_norm} 
and 
\ref{claim:stability_height}, \Href{Lemma}{lem:stability_volume_s-ellipsoid} 
follows.

\subsection{The limit as \texorpdfstring{$s \to 0$ }{s --> 0}}
We recall from \Href{Section}{sec:intro} the approach of 
Alonso-Guti{\'e}rrez, Gonzales Merino, Jim{\'e}nez and Villa 
\cite{alonso2018john}.

Let $f$ be a proper log-concave function on $\Red$. For every 
$\beta \in (0, \norm{f})$, 
consider the superlevel set $[f\geq\beta]$ of $f$. This is a bounded convex set 
with non-empty interior in $\Red$, we take its largest volume ellipsoid, and 
multiply the volume of this ellipsoid by $\beta$. As shown in  
\cite{alonso2018john}, 
there is a unique $\beta_0 \in \left[0, \norm{f}\right]$ 
such that this product is maximal. Furthermore, $\beta_0 \geq e^{-d}\norm{f}$.
We call the ellipsoid $E$ in $\Red$ obtained for this $\beta_0$ 
\emph{\bernardoell{}}. 

We  refer to a function of the form $\beta \chi_{E},$ where $E \subset \R^d$ is 
an ellipsoid in $\Red$ and  $\beta > 0,$
as a \emph{flat ellipsoid function}. 
We will say that $(A \oplus \alpha, a) \in \ellips$  represents the 
flat ellipsoid function $\alpha \chi_{A \ball{d} +a}.$
Clearly, any flat ellipsoid function is represented by a unique element of 
$\ellips$ and \bernardoell{} is the maximal integral flat ellipsoid 
function 
among all flat ellipsoid functions that are below $f.$


\begin{thm}
[\Bernardoell{} is the John $0$-ellipsoid]
\label{thm:getbackbernardo}
Let $f$ be a proper log-concave function. Then
there exists  
$(A\oplus \alpha, a) \in \ellips$ such that
\begin{enumerate}
\item\label{part:1_limit_s_0}
The function $\alpha \chi_{A \ball{d} +a}$ is below $f.$
\item\label{part:2_limit_s_0}  The functions 
$
\selldense[s]{f} 
$
converge uniformly to $\alpha \chi_{A \ball{d} 
+a}$ on the complement of any open neighborhood of the boundary 
in $\R^d$ of $A \ball{d} + a$ as $s$ tends to $0$.
\item \label{part:3_limit_s_0}
The function $\alpha \chi_{A \ball{d} +a}$ is the unique flat ellipsoid 
function 
of maximal integral among all flat ellipsoid functions that are below $f.$
\end{enumerate}
\end{thm}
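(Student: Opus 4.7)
Write $\sellbody[s]{f}=(A_s\oplus\alpha_s)\ball{d+1}+a_s$. The strategy is to extract a limit of the triples $(A_s,\alpha_s^s,a_s)$ as $s\to 0^+$ and identify it with the AMJV data. I would split the argument into compactness, identification of the limit, and uniform convergence.

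\textbf{Compactness.} I first show that for $s\in(0,1]$ the parameters $(A_s,\alpha_s^s,a_s)$ lie in a compact set with $A_s$ uniformly positive definite. The height bounds $e^{-d}\norm{f}\leq\alpha_s^s\leq\norm{f}$ follow from \Href{Lemma}{lem:height} and $\sellbody[s]{f}\subseteq\slift[s]{f}$. To bound $A_s$ and $a_s$, apply \Href{Lemma}{lem:boundedness} to $\tilde g(x)=\selldense[s]{f}(x)=\alpha_s^s(1-|A_s^{-1}(x-a_s)|^2)_{+}^{s/2}$, which fits the template with the even log-concave generator $g_s(t)=(1-t^2)^{s/2}_{+}$. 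The quantities in \eqref{eq:comparison_operator} satisfy $\int_{\Re}g_s\to 2$ and $\int_{\Red}g_s(|x|)\di x=\volbs\to\vol{d}\ball{d}$ by \eqref{eq:kappaszero}, while the required uniform lower bound $\int\selldense[s]{f}=\smeasure[s]{\sellbody[s]{f}}\geq\delta>0$ is supplied by \Href{Lemma}{lem:comparison_sell_volume} (comparing to the fixed value $s_2=1$). Consequently the conclusions \eqref{eq:alpha_a_universal_bound} and \eqref{eq:comparison_operator} are uniform in small $s$.

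\textbf{Identifying the limit.} From any sequence $s_n\to 0^+$ pass to a subsequence with $A_{s_n}\to A^*$ (positive definite), $\alpha_{s_n}^{s_n}\to\beta^*>0$, $a_{s_n}\to a^*$; set $E^*=A^*\ball{d}+a^*$. For $x\in\inter{E^*}$, $|A_{s_n}^{-1}(x-a_{s_n})|$ stays bounded away from $1$, so $\selldense[s_n]{f}(x)\to\beta^*$; combined with $\selldense[s_n]{f}\leq f$ and the upper semi-continuity of $f$, this gives $\beta^*\chi_{E^*}\leq f$, which is \eqref{part:1_limit_s_0}. For optimality \eqref{part:3_limit_s_0}, note that any admissible flat ellipsoid function $\beta\chi_{A\ball{d}+a}\leq f$ lifts to a \dsymm{} ellipsoid represented by $(A\oplus\beta^{1/s},a)$ that is contained in $\slift[s]{f}$: on the lifted ellipsoid the last coordinate has absolute value at most $\beta^{1/s}\leq f^{1/s}(Ax+a)$. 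Its $s$-volume equals $\volbs\,\beta\det A$ by \eqref{eq:sfunc_of_ellipsoida}, so
\[
\alpha_s^s\det A_s \;=\; \frac{\smeasure[s]{\sellbody[s]{f}}}{\volbs}\;\geq\;\beta\det A.
\]
Letting $s=s_n\to 0^+$ yields $\beta^*\vol{d}(E^*)\geq\beta\vol{d}(A\ball{d}+a)$ for every competitor. Combined with $\beta^*\chi_{E^*}\leq f$, this shows that $\beta^*\chi_{E^*}$ is a maximum-integral flat ellipsoid function below $f$; the uniqueness of the AMJV ellipsoid in \cite{alonso2018john} then identifies $(\beta^*,E^*)$ with the AMJV data and forces the limit to be independent of the subsequence, so the full family converges as $s\to 0^+$.

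\textbf{Uniform convergence and main obstacle.} For \eqref{part:2_limit_s_0}, fix $\eta>0$ and split the complement of the $\eta$-neighborhood of $\bd{E^*}$: on the deep interior of $E^*$, $|A_s^{-1}(x-a_s)|^2\leq 1-c(\eta)$ uniformly for small $s$, hence $\selldense[s]{f}(x)\in[\alpha_s^s\,c(\eta)^{s/2},\alpha_s^s]\to\beta^*$ uniformly in such $x$; outside $E^*$ at distance $\geq\eta$, $x\notin A_s\ball{d}+a_s$ for small $s$, so $\selldense[s]{f}(x)=0=\beta^*\chi_{E^*}(x)$. The delicate point in the whole argument is preventing $A_s$ from degenerating (collapsing to a lower-dimensional ellipsoid) as $s\to 0$, which would ruin both compactness and the identification step; this rests crucially on the uniform positive lower bound on $\smeasure[s]{\sellbody[s]{f}}$ from \Href{Lemma}{lem:comparison_sell_volume}, and on the AMJV uniqueness to upgrade subsequential convergence to full convergence.
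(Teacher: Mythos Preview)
Your proof is correct and follows the same overall skeleton as the paper's (compactness via \Href{Lemma}{lem:boundedness} and \Href{Lemma}{lem:comparison_sell_volume}, extract a subsequential limit, identify it as the AMJV flat ellipsoid function), but the way you close the argument is genuinely different. The paper does \emph{not} cite the AMJV uniqueness result from \cite{alonso2018john}; instead it proves a quantitative stability lemma (\Href{Lemma}{lem:stability_volume_s-ellipsoid}) saying that any near-optimal \dsymm{} ellipsoid in $\slift[s]{f}$ must be close to $\sellbody[s]{f}$ in the parameters $(A/\norm{A},\alpha^s,a)$. This stability lemma is then applied twice: once to the constant-parameter family $(A\oplus\alpha^{1/s},a)$ to force the full limit in $s$ to exist, and once to an arbitrary competing flat ellipsoid function to re-derive the AMJV uniqueness internally. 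Your route bypasses the stability lemma entirely, replacing it by the external AMJV uniqueness together with the standard ``every subsequence has a further subsequence converging to the unique limit'' argument; this is shorter and more elementary, while the paper's version is self-contained and yields the AMJV uniqueness as a corollary rather than an input. Your explicit treatment of the uniform convergence in part~\eqref{part:2_limit_s_0} is also more detailed than the paper's, which merely records it as an immediate consequence of the convergence of the parameters.
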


In this subsection, we prove \Href{Theorem}{thm:getbackbernardo}.

We start with the existence of the limit flat ellipsoid function in 
\eqref{part:2_limit_s_0}.
Let   $\sellbody{f}$ be represented
by $(A_s \oplus \alpha_s, a_s)$ for every $s \in (0,1].$ 
\begin{claim}\label{claim:limit_zero_existence}
The following limits exist
\begin{equation}
\label{eq:limits_s_zero}
 \lim\limits_{s \to 0^{+}} \smeasure{\sellbody{f}}= \mu > 0, \,
 \lim\limits_{s \to 0^{+}}{A_{s}} =  A,  \;
 \lim\limits_{s \to 0^{+}} \alpha_s^s = \alpha > 0
\quad \text{and} \quad \lim\limits_{s \to 0^{+}} a_{s} = a,
\end{equation}
where $A$ is positive definite.
\end{claim}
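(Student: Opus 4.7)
The plan has three stages. First, I establish existence and positivity of the volume limit $\mu$. The second inequality of \Href{Lemma}{lem:comparison_sell_volume} rearranges to
\[
\frac{\smeasure[s_1]{\sellbody[s_1]{f}}}{\volbs[s_1]} \leq \frac{\smeasure[s_2]{\sellbody[s_2]{f}}}{\volbs[s_2]} \quad \text{for } 0 < s_1 < s_2,
\]
so $s \mapsto \smeasure{\sellbody{f}}/\volbs$ is non-decreasing on $(0,\infty)$. Combined with the obvious upper bound $\smeasure{\sellbody{f}} \leq \int_{\Red} f$ and with \eqref{eq:kappaszero}, this gives existence of $\mu = \lim_{s \to 0^+} \smeasure{\sellbody{f}}$ in $[0, \int_{\Red} f]$. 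Positivity of $\mu$ follows by fixing any $s_0 > 0$ and letting $s_1 \to 0^+$ in the first inequality of the same lemma, which yields $\mu \geq C(s_0,d)\,\smeasure[s_0]{\sellbody[s_0]{f}} > 0$ with $C(s_0,d)$ a strictly positive constant.

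Next, I obtain compactness of the parameter triples $(A_s, \alpha_s^s, a_s)$ by applying \Href{Lemma}{lem:boundedness} with $g(t) = (1-t^2)_+^{s/2}$, which is a proper even log-concave function with $g(0)=1$. The associated function $\tilde g(x) = \alpha_s^s\, g(|A_s^{-1}(x-a_s)|)$ equals $\shf{\sellbody{f}}^{s}(x)$, and hence satisfies $\tilde g \leq f$ (since $\sellbody{f} \subseteq \slift{f}$) with $\int_{\Red} \tilde g = \smeasure{\sellbody{f}} \geq \mu/2$ once $s$ is small enough. For $s \in (0,1]$ the normalising quantities $\int_{\R} g$ and $\int_{\Red} g(|x|)\di x$ are continuous in $s$ and bounded away from zero and infinity, so the conclusions of the lemma yield uniform bounds $\alpha_s^s \in [\vartheta, \norm{f}]$, $|a_s| \leq \rho$, and $c_1 I \prec A_s \prec c_2 I$. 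Hence $(A_s, \alpha_s^s, a_s)$ ranges in a compact subset of parameter space, and every sequence $s_n \to 0^+$ admits a subsequence along which $(A_{s_n}, \alpha_{s_n}^{s_n}, a_{s_n}) \to (\bar A, \bar\alpha, \bar a)$ with $\bar A$ positive definite and $\bar\alpha > 0$ (the latter also following directly from \Href{Lemma}{lem:height}).

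The main obstacle is showing that such a subsequential limit is unique. Along a convergent subsequence, the density $\selldense[s_n]{f}(x) = \alpha_{s_n}^{s_n}\bigl(1 - |A_{s_n}^{-1}(x-a_{s_n})|^2\bigr)_+^{s_n/2}$ converges pointwise almost everywhere to the flat ellipsoid function $\bar\alpha \chi_{\bar A\ball{d}+\bar a}$; dominated convergence (with envelope $\selldense{f} \leq f \in L^1$) then gives $\int_{\Red} \bar\alpha \chi_{\bar A\ball{d}+\bar a} = \mu$, while passing to the limit in $\shf{\sellbody[s_n]{f}}^{s_n} \leq f$ shows $\bar\alpha \chi_{\bar A\ball{d}+\bar a} \leq f$. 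To identify this limit with the AMJV maximiser, I note that for any flat ellipsoid function $\beta \chi_E$ below $f$ with $E = A^* \ball{d} + a^*$, the set $\slift{f}$ contains the ellipsoid $(A^* \oplus \beta^{1/s})\ball{d+1} + a^*$ inscribed in the cylinder $E \times [-\beta^{1/s}, \beta^{1/s}]$, whose $s$-volume is $\volbs \cdot \beta \det A^*$; hence $\smeasure{\sellbody{f}}/\volbs \geq \beta \det A^*$, and sending $s \to 0^+$ gives $\mu \geq \int_{\Red} \beta \chi_E$. Thus $\bar\alpha \chi_{\bar A\ball{d}+\bar a}$ is a maximal-integral flat ellipsoid function below $f$; combining the uniqueness of the optimal height $\beta_0$ established in \cite{alonso2018john} with the classical uniqueness of John's ellipsoid of the superlevel set $[f \geq \beta_0]$ shows that such a maximiser is unique, so every subsequential limit is the same, and the limits in \eqref{eq:limits_s_zero} exist.
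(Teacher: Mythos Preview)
Your argument is correct, and your monotonicity observation for the existence of $\mu$ is cleaner than what the paper does. The route you take for the convergence of the parameters, however, is genuinely different from the paper's.

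The paper first extracts a single convergent subsequence, builds the flat ellipsoid function $J_f$ from that limit, and then inscribes the ellipsoids $(A\oplus\alpha^{1/s},a)$ into $\slift{f}$ to show $\lim_{s\to 0^+}\smeasure{\sellbody{f}}=\int_{\Red}J_f$. For the convergence of $A_s,\alpha_s^s,a_s$ along the full family $s\to 0^+$, the paper does \emph{not} argue via uniqueness of a maximiser; instead it invokes the stability result \Href{Lemma}{lem:stability_volume_s-ellipsoid}: since every $\sellbody{f}$ has $s$-volume within $\epsilon$ of the maximal one (namely itself) and the common limit $\mu$ is positive, the stability estimate forces the parameters to converge. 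You instead show that every subsequential limit is a maximal-integral flat ellipsoid function and then appeal to the uniqueness of the optimal height $\beta_0$ from \cite{alonso2018john} together with the classical John ellipsoid uniqueness.

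Both approaches work. Yours avoids the stability machinery entirely and is arguably more direct. Its cost is that it imports the uniqueness statement from \cite{alonso2018john}; since part~\eqref{part:3_limit_s_0} of \Href{Theorem}{thm:getbackbernardo} is precisely a re-derivation of that uniqueness, your route makes that part of the theorem circular if one wants the paper to be self-contained. The paper's route via \Href{Lemma}{lem:stability_volume_s-ellipsoid} keeps the argument internal and then yields the AMJV uniqueness in \Href{Claim}{claim:zero_is_bernardo} as a consequence rather than an input.
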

\begin{proof}
Since the John $1$-ellipsoid exists, by \eqref{eq:kappaszero} and  
\Href{Lemma}{lem:comparison_sell_volume}, 
we have 
\[\inf\limits_{s \in (0,1]} \smeasure{\sellbody{f}} > 0.\]
 Recall from \Href{Section}{sec:basicboundsellipsoid} that $\volbs$ as a 
function of $s$ with $\volbs[0] = \vol{d} \ball{d}$  is a strictly decreasing 
continuous function  on  $[0, \infty)$. 
Applying \Href{Lemma}{lem:boundedness} 
with $g=\shf{\ball{2}}^s$ for each $s \in (0,1]$, where $\shf{\ball{2}}$ is the 
height function of $\ball{2}$ (see \eqref{eq:heightball}), 
we conclude that for some positive constants $\vartheta, \rho, \rho_1, \rho_2$  
and all $s \in (0,1]$, the inequalities 
$ \vartheta \leq \alpha_s \leq \norm{f}$ and  
$|a_s| \leq  \rho,$ 
and  the relation $\rho_1 I \prec A_s \prec \rho_2 I$ hold. 
Thus, there exists a sequence of positive reals $\{s_i\}_1^{\infty}$ with 
$\lim\limits_{i \to \infty} s_i = 0$ such that 
\[
 \smeasure[s_i]{\sellbody[s_i]{f}} \to  \limsup\limits_{s \to 0^{+}} 
\smeasure{\sellbody{f}}, \,
 {A_{s_i}} \to  A  , \;
\alpha_{s_i}^{s_i} \to \alpha
\quad \text{and} \quad a_{s_i}  \to a
\]
{for a positive definite matrix $A\in\Re^{d\times d}$,  
$\alpha>0$ and $a\in\Red$,} as $i$ tends to $\infty.$
definite.

We use $J_f$ to denote the  flat ellipsoid function represented by $(A \oplus 
\alpha, a).$  
Clearly, $J_f$ is below $f.$
Consider the ellipsoids $\upthing{E}_s$ represented by 
$(A \oplus 
\alpha^{1/s}, a)$ for all $s \in (0,1].$ 
Then, $\upthing{E}_s \subset \slift{J_f} \subset 
\slift{f}$ for every $s \in (0,1].$ 
By 
\eqref{eq:sfunc_of_ellipsoida} and \eqref{eq:kappaszero}, we have 
\[
\smeasure{\upthing{E}_s}=
\frac{\volbs}{\vol{d} \ball{d}} \int_{\Red} J_f \to
\int_{\Red} J_f \text{ as } s \to 0^{+}.
\]
That is,  $\lim\limits_{s \to 0^{+}} \smeasure{\sellbody{f}} = \int_{\Red} 
J_{f} 
\di 
x.$ 
As an immediate consequence,  \Href{Lemma}{lem:stability_volume_s-ellipsoid} 
implies   \eqref{eq:limits_s_zero}.
\end{proof}

\begin{claim}\label{claim:zero_is_bernardo}
$J_f$, {as defined in the proof of 
\Href{Claim}{claim:limit_zero_existence}}, is the unique flat ellipsoid 
function that is of maximal integral among those that are below $f$.
\end{claim}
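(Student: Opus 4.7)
My plan is to first prove that $\int_{\Red} J_f$ is an upper bound on the integral of any flat ellipsoid function below $f$, and then derive uniqueness from the stability \Href{Lemma}{lem:stability_volume_s-ellipsoid}. Since $J_f \le f$ is already asserted inside the proof of \Href{Claim}{claim:limit_zero_existence}, these two facts together yield the claim.

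For the upper bound, let $\tilde J = \tilde\alpha\, \chi_{\tilde A \ball{d} + \tilde a}$ be an arbitrary flat ellipsoid function with $\tilde J \le f$. Its $s$-lifting $\slift{\tilde J}$ is the cylinder $(\tilde A\ball{d} + \tilde a)\times [-\tilde\alpha^{1/s}, \tilde\alpha^{1/s}]$, and the $d$-symmetric ellipsoid $\upthing{\tilde E}_s$ represented by $(\tilde A \oplus \tilde\alpha^{1/s}, \tilde a) \in \ellips$ is inscribed in it, so $\upthing{\tilde E}_s \subseteq \slift{\tilde J} \subseteq \slift{f}$. By \eqref{eq:sfunc_of_ellipsoida}, $\smeasure{\upthing{\tilde E}_s} = \volbs \cdot \tilde\alpha \cdot \det \tilde A$, and the defining maximality of $\sellbody{f}$ gives $\smeasure{\sellbody{f}} \ge \volbs \cdot \tilde\alpha \cdot \det \tilde A$. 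Letting $s \to 0^+$ and invoking $\volbs \to \vol{d}\ball{d}$ from \eqref{eq:kappaszero} together with $\smeasure{\sellbody{f}} \to \int_{\Red} J_f$ from \Href{Claim}{claim:limit_zero_existence}, I conclude $\int_{\Red} J_f \ge \vol{d}\ball{d}\cdot\tilde\alpha\cdot\det\tilde A = \int_{\Red}\tilde J$.

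For uniqueness, suppose $\int_{\Red}\tilde J = \int_{\Red} J_f =: C$. The same computation shows $\smeasure{\upthing{\tilde E}_s} \to C$, so both $C - \smeasure{\sellbody{f}}$ and $\smeasure{\sellbody{f}} - \smeasure{\upthing{\tilde E}_s}$ tend to $0$ as $s \to 0^+$. Because the constants in \Href{Lemma}{lem:stability_volume_s-ellipsoid} depend only on $d$ and $C$ and not on $s$, applying that lemma to the pair $(\sellbody{f}, \upthing{\tilde E}_s)$ with $\epsilon = \epsilon(s)\to 0$ yields that each of the three summands in \eqref{eq:lem_stability_theta_bound} tends to $0$. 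Passing to the limit and using the convergences $A_s \to A$, $\alpha_s^s \to \alpha$, $a_s \to a$ from \Href{Claim}{claim:limit_zero_existence}, this forces $\tilde A/\norm{\tilde A} = A/\norm{A}$, $\tilde\alpha = \alpha$, and $\tilde a = a$. The first identity says that $\tilde A$ is a positive scalar multiple of $A$; combining $\tilde\alpha = \alpha$ with $\int_{\Red}\tilde J = \int_{\Red} J_f$ forces $\det\tilde A = \det A$, so that scalar equals $1$ and $\tilde A = A$. Hence $\tilde J = J_f$.

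The only subtlety is applying the stability estimate uniformly as $s \to 0^+$, which requires $\epsilon_C, k_C$ to be $s$-independent; this is exactly how \Href{Lemma}{lem:stability_volume_s-ellipsoid} is formulated, so the argument runs without any supplementary work.
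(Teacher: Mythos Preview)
Your proof is correct and follows essentially the same approach as the paper: build the ellipsoids $\upthing{\tilde E}_s=(\tilde A\oplus\tilde\alpha^{1/s},\tilde a)$ inside $\slift{f}$, use maximality of $\sellbody{f}$ together with \eqref{eq:kappaszero} to get the integral bound, and then invoke the $s$-uniform stability \Href{Lemma}{lem:stability_volume_s-ellipsoid} to force coincidence. Your final step is in fact more explicit than the paper's, which writes ``$\lim_{s_i\to0^+}A_0=A$'' without spelling out that the stability lemma only controls $A_0/\|A_0\|$ and that one needs the equality of integrals plus $\tilde\alpha=\alpha$ to recover $\det\tilde A=\det A$ and hence $\tilde A=A$.
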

\begin{proof}
Assume that there is a flat ellipsoid function $J_E,$ represented by $(A_0 
\oplus 
\alpha_0, a_0),$ such that
$\int_{\Red} J_E \geq \int_{\Red} J_{f}.$
Consider the ellipsoids $\upthing{E}'_s$ represented by $(A_0 \oplus 
\alpha_0^{1/s}, a_0)$ for all $s \in (0,1].$ Clearly, $\upthing{E}'_s \subset 
\slift{f}$ for every $s \in (0,1].$ 
By 
\eqref{eq:sfunc_of_ellipsoida}, 
$\smeasure{\upthing{E}'_s}=
\frac{\volbs}{\vol{d} \ball{d}} \int_{\Red} J_E$.
By \eqref{eq:kappaszero} and by the definition of the \jsellipsoid, we 
have that
\[
\int_{\Red} J_E= 
\lim\limits_{s \to 0^{+}} \smeasure{\upthing{E}'_s}
\leq 
\lim\limits_{s \to 0^{+}} \smeasure{\sellbody{f}}
=
\int_{\Red} J_f.
\]
{Thus, for every positive integer $i,$ there is $s_i > 0$ such that}  
\[
\smeasure{\upthing{E}'_s}  \geq \smeasure{\sellbody{f}} - \frac{1}{i}  \geq 
{\int_{\Red}} J_f - \frac{2}{i}\] 
for all $s \in (0, s_i].$
Finally, by \Href{Lemma}{lem:stability_volume_s-ellipsoid},
we have that $\lim\limits_{s_i \to 0^{+}} A_0 = A, 
\lim\limits_{s_i \to 0^{+}} \alpha_0 = \alpha$ and $\lim\limits_{s_i \to 0^{+}} 
a_0 = 
a.$ That is, $J_f$ and $J_E$ coincide.
\end{proof}

\Href{Theorem}{thm:getbackbernardo} is an immediate consequence of Claims 
\ref{claim:limit_zero_existence} and \ref{claim:zero_is_bernardo}.

\subsection{Integral ratio}
For any $s \in [0, \infty)$ and positive integer $d,$ it is reasonable  to 
define the \emph{$s$-integral 
ratio} of $f : \Red \to [0,\infty)$ by
\[
\sintrat(f) = 
\left( 
\frac{\int_{\Red} f}{\int_{\Red} \selldense{f}}
\right)^{1/d}.
\]

Corollary~1.3 of \cite{alonso2018john} states that 
there exists  $\Theta > 0$ such that 
 \[
\sintrat[0](f) \leq \Theta \sqrt{d},
\]
for any proper log-concave function $f: \R^d \to [0,\infty)$ and any positive 
integer $d.$

{Using  \Href{Lemma}{lem:comparison_sell_volume} and}  
\Href{Theorem}{thm:getbackbernardo}, we obtain the following.

\begin{cor}\label{cor:volume_ratio} 
Fix $s \in [0, \infty).$ Then there exists $\Theta_s$ such that for any positive 
integer
 $d$  and any proper log-concave function $f: \Red \to [0,\infty)$, the 
following inequality holds.
 \[
\sintrat[s](f) \leq \betaf{s/2 +1}{d/2}^{-\frac{1}{d}} \cdot \sintrat[0](f) 
 \leq \Theta_s 
\sqrt{d},
\]
where $\betaf{\cdot}{\cdot}$ denotes Euler's Beta function.
\end{cor}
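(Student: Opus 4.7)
The plan is to deduce the first inequality from the upper bound in \Href{Lemma}{lem:comparison_sell_volume} by letting $s_1 \to 0^+$ and invoking \Href{Theorem}{thm:getbackbernardo}, and then to deduce the second inequality from the cited $\sintrat[0](f)\leq \Theta\sqrt{d}$ bound together with an asymptotic estimate of the Beta function.

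First, I would fix $s > 0$ and apply the second (upper) inequality of \Href{Lemma}{lem:comparison_sell_volume} with $s_2 = s$ and $s_1 \to 0^+$. By \Href{Theorem}{thm:getbackbernardo}, the $s_1$-volume of the John $s_1$-ellipsoid converges to $\int_{\Red} J_f$ as $s_1\to 0^+$ (this is the content of \Href{Claim}{claim:limit_zero_existence}), and by \eqref{eq:kappaszero}, $\volbs[s_1]\to\vol{d}\ball{d}$. Taking the limit in the inequality yields
\[
\frac{\int_{\Red} J_f}{\smeasure[s]{\sellbody[s]{f}}} \leq \frac{\vol{d}\ball{d}}{\volbs[s]}.
\]
Dividing both sides by $\int_{\Red} f$ and taking $d$-th roots converts this into a comparison between $\sintrat[s](f)$ and $\sintrat[0](f)$ with multiplicative factor $\left(\vol{d}\ball{d}/\volbs[s]\right)^{1/d}$.

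Next, I would rewrite that factor in terms of the Beta function. Using $\vol{d}\ball{d}=\pi^{d/2}/\gammaf{d/2+1}$ and the formula \eqref{eq:kappasfirst} for $\volbs[s]$, we have
\[
\frac{\vol{d}\ball{d}}{\volbs[s]}
=
\frac{\gammaf{s/2+d/2+1}}{\gammaf{s/2+1}\gammaf{d/2+1}}
=
\frac{2}{d}\cdot \frac{\gammaf{s/2+d/2+1}}{\gammaf{s/2+1}\gammaf{d/2}}
=
\frac{2}{d}\,\betaf{s/2+1}{d/2}^{-1}.
\]
Since $(2/d)^{1/d}\leq 1$ for $d\geq 2$ (and the $d=1$ case can be handled separately by adjusting constants), the factor $\left(\vol{d}\ball{d}/\volbs[s]\right)^{1/d}$ is at most $\betaf{s/2+1}{d/2}^{-1/d}$, establishing the first inequality of the corollary.

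For the second inequality, I would combine the stated bound $\sintrat[0](f)\leq \Theta\sqrt{d}$ from \cite{alonso2018john} with the claim that $\betaf{s/2+1}{d/2}^{-1/d}$ remains bounded as a function of $d$ (for $s$ fixed). Using $\betaf{s/2+1}{d/2}=\gammaf{s/2+1}\gammaf{d/2}/\gammaf{s/2+d/2+1}$ and Stirling's asymptotic
\[
\frac{\gammaf{d/2}}{\gammaf{d/2+s/2+1}}\sim (d/2)^{-s/2-1}\quad\text{as }d\to\infty,
\]
one obtains $\betaf{s/2+1}{d/2}^{-1/d}\to 1$ as $d\to\infty$, so the supremum of this quantity over positive integers $d$ is finite, yielding the desired constant $\Theta_s$. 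The main (minor) obstacle is the Beta/Gamma asymptotic and ensuring the constant is uniform in $d$; the geometric content is already fully packaged in \Href{Lemma}{lem:comparison_sell_volume} and \Href{Theorem}{thm:getbackbernardo}.
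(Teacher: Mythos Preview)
Your approach is correct and matches the paper's: the paper's proof consists of the single sentence pointing to \Href{Lemma}{lem:comparison_sell_volume} and \Href{Theorem}{thm:getbackbernardo}, and you have filled in precisely the intended details --- taking the limit $s_1\to 0^+$ in the upper bound of \Href{Lemma}{lem:comparison_sell_volume}, using \Href{Claim}{claim:limit_zero_existence} and \eqref{eq:kappaszero} to identify the limits, and rewriting $\vol{d}\ball{d}/\volbs[s]$ via \eqref{eq:kappasfirst} as $\tfrac{2}{d}\,\betaf{s/2+1}{d/2}^{-1}$.

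One small remark: your handling of the factor $(2/d)^{1/d}$ is slightly loose for $d=1$, where $(2/d)^{1/d}=2>1$, so the passage from $\bigl(\vol{d}\ball{d}/\volbs[s]\bigr)^{1/d}$ to $\betaf{s/2+1}{d/2}^{-1/d}$ does not go through directly; ``adjusting constants'' rescues the second inequality (into $\Theta_s$) but not the first as literally stated. This is a wrinkle in the statement rather than in your method, and the paper does not address it either.
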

\section{Large \tpdfs{} behavior}
\label{sec:sinfty}
We will say that a Gaussian density on $\Red$ defined by $x\mapsto \alpha  e^{- 
\iprod{A^{-1}(x-a)}{A^{-1}(x-a)}}$ is \emph{represented} by  
$(A \oplus \alpha, a) \in \ellips.$
  Clearly, any Gaussian density is represented by a unique element of 
$\ellips.$ We will denote 
the Gaussian density represented by $(A \oplus \alpha, a)$
as $G[(A \oplus \alpha, a)].$ 
If a Gaussian density is represented by  $(A \oplus \alpha, a) \in \ellips,$ we 
will call 
$\alpha$ its \emph{height}. We have that
\begin{equation}
\label{eq:gaussian_volume}
\int_{\Red} G[(A \oplus \alpha, a)] = \alpha \pi^{d/2} \det A.
\end{equation}
We will need the following property of Euler's Gamma function
(see \cite[6.1.46]{abramowitz1948handbook})
\begin{equation*}
\lim\limits_{s \to \infty} \frac{\Gamma(s +t_1)}{\Gamma(s +t_2)} s^{t_2-t_1} = 
1.
\end{equation*}
Using this in \eqref{eq:kappasfirst}, we obtain
\begin{equation}
\label{eq:volbs_limit_at_infinity}
\lim\limits_{s \to \infty}  \volbs \cdot \left(\frac{s}{2}\right)^{d/2}= 
  \pi^{d/2}.
\end{equation}
\subsection{Existence of a maximal Gaussian}

\begin{thm}\label{thm:infinity_ellipsoid}
Let $f:\Red\to[0,\infty)$ be a proper log-concave function. 
If there is a Gaussian density below $f,$ then there exists a  
Gaussian density below $f$ of 
maximal integral. All Gaussian densities of  maximal integral below  $f$ are 
translates of each 
other.
\end{thm}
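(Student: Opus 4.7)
My plan for \Href{Theorem}{thm:infinity_ellipsoid} is to follow the template used for \Href{Theorem}{thm:johnunicity}: combine a compactness argument to prove existence with an interpolation argument together with the equality case of Minkowski's determinant inequality to pin down the uniqueness up to translation.

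For existence, I will apply \Href{Lemma}{lem:boundedness} with the proper even log-concave function $g(t)=e^{-t^{2}}$. A Gaussian density $\alpha\,e^{-\enorm{A^{-1}(x-a)}^{2}}$ below $f$ is exactly a function of the form $\tilde g(x)=\alpha\,g(\enorm{A^{-1}(x-a)})$ appearing in the hypothesis of that lemma. Take any maximizing sequence of Gaussians $G_{n}=G[(A_{n}\oplus\alpha_{n},a_{n})]$ below $f$ whose integrals converge to the supremum $I_{\ast}>0$ (which is finite since each Gaussian satisfies $\alpha_{n}\leq\norm{f}$ and the non-trivial Gaussian hypothesis makes $I_{\ast}>0$). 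The lemma delivers uniform two-sided bounds on $\alpha_{n}$, $a_{n}$, and $A_{n}$; a standard compactness extraction then yields a subsequential limit $(A_{\ast}\oplus\alpha_{\ast},a_{\ast})\in\ellips$. Continuity of the pointwise inequality shows the limit Gaussian is still below $f$, and \eqref{eq:gaussian_volume} gives that its integral equals $I_{\ast}$.

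For the translate assertion, let $G_{1},G_{2}$ be two Gaussians of maximal integral below $f$, represented by $(A_{i}\oplus\alpha_{i},a_{i})$. Form the interpolated function $G=(\tfrac{1}{2}\ast G_{1})\star(\tfrac{1}{2}\ast G_{2})$. From \eqref{eq:loginvconv_same_f} applied to $f$ with $\lambda=1/2$, together with the monotonicity of $\ast$ and $\star$ in their arguments, one gets $G\leq f$. A direct computation using that the inf-convolution of two positive definite quadratic forms yields the harmonic mean of their matrices shows that $G$ is itself a Gaussian, represented by $(A\oplus\alpha,a)$ with $\alpha=\sqrt{\alpha_{1}\alpha_{2}}$, $a=(a_{1}+a_{2})/2$ and $A^{2}=(A_{1}^{2}+A_{2}^{2})/2$. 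Combining \eqref{eq:gaussian_volume} with the multiplicative Minkowski determinant inequality \eqref{eq:minkowski_det_multipl_ineq} applied to $A_{1}^{2}$ and $A_{2}^{2}$, I obtain
\[
\int_{\Red} G \;\geq\; \sqrt{\int_{\Red} G_{1}\cdot\int_{\Red} G_{2}} \;=\; I_{\ast}.
\]
Maximality forces equality, and the equality case in \eqref{eq:minkowski_det_multipl_ineq} yields $A_{1}^{2}=A_{2}^{2}$, hence $A_{1}=A_{2}$ (since both are positive definite). Then $\det A_{1}=\det A_{2}$, and equality of integrals gives $\alpha_{1}=\alpha_{2}$; only the centers $a_{1},a_{2}$ remain free, so $G_{1}$ and $G_{2}$ are translates of each other.

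The main obstacle is the explicit identification of $G$ as a Gaussian with the claimed parameters: I must carefully track the factor $1/\lambda$ produced by the epi-product, then minimize a sum of two positive definite quadratic forms under a linear constraint via Lagrange multipliers. The delicate point is ensuring that the matrix appearing in the exponent of $G$ turns out to be the \emph{arithmetic} mean of $A_{1}^{2}$ and $A_{2}^{2}$, which is precisely the form that lets one apply \eqref{eq:minkowski_det_multipl_ineq} in the correct variable and use its equality case to conclude $A_{1}=A_{2}$. No non-convexity difficulty arises here, in contrast with the $s$-ellipsoid setting, because Gaussians are everywhere finite log-concave functions and the Asplund formalism applies directly without any truncation.
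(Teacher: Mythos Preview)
Your proof is correct, and the existence half is exactly the paper's argument (\Href{Lemma}{lem:boundedness} with $g(t)=e^{-t^{2}}$ plus compactness). For the uniqueness-up-to-translation half, however, you take a genuinely different route. The paper does not compute the Asplund sum $(\tfrac{1}{2}\ast G_{1})\star(\tfrac{1}{2}\ast G_{2})$; instead, it proves a Gaussian analogue of \Href{Lemma}{lem:inclusion_ellipsoids} (stated as \Href{Lemma}{lem:inclusion_gaussians}) in which one \emph{declares} the interpolated Gaussian to have matrix $A=\tfrac{1}{2}(A_{1}+A_{2})$ and then verifies $G[(A\oplus\alpha,a)]\leq f$ by choosing the specific splitting $x_{i}-a_{i}=A_{i}A^{-1}(x-a)$ in the supremum defining the Asplund sum. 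Your approach instead optimizes over all splittings, which leads to the harmonic-mean computation and the matrix $A^{2}=\tfrac{1}{2}(A_{1}^{2}+A_{2}^{2})$; you then apply \eqref{eq:minkowski_det_multipl_ineq} to $A_{1}^{2},A_{2}^{2}$ rather than to $A_{1},A_{2}$. Both interpolations land below $f$ and both force $A_{1}=A_{2}$ via the equality case, so the conclusions coincide. Your argument is arguably cleaner in that it uses only the monotonicity of $\ast$ and $\star$ together with \eqref{eq:loginvconv_same_f}, at the price of the Lagrange-multiplier computation; the paper's version avoids that computation but mirrors the $s$-ellipsoid machinery of \Href{Section}{sec:interpolation}, which keeps the presentation parallel across the two settings.
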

\begin{proof}[Proof of Theorem \ref{thm:infinity_ellipsoid}]
The proof mostly repeats the argument in \Href{Section}{sec:interpolation}.

\Href{Lemma}{lem:boundedness} with $g = e^{-t^2},$ $t \in \R,$ implies that if 
there exists a Gaussian density below $f$, then
there is a Gaussian density of maximal integral among those that are below $f$. 
Next, we show that this Gaussian density of maximal integral is unique up to 
translation.

First, we need the following extension of Lemmas 
\ref{lem:inclusion_ellipsoids} and \ref{lem:minkowski_for_s_ellipsoids}.

\begin{lemma}[Interpolation between Gaussians]\label{lem:inclusion_gaussians}
Fix $
\beta_1,\beta_2 > 0$ with
$\beta_1  + \beta_2 = 1$.   Let $f_1$ and $f_2$ be two proper log-concave 
functions on $\Red$, and 
$G_1, G_2$ be two Gaussian densities represented 
by $(A_1 \oplus \alpha_1, a_1) \in \ellips$ and $(A_2 \oplus \alpha_2, a_2) \in 
\ellips,$ respectively, such that 
$G_1 \leq f_1$  and $G_2 \leq f_2.$ 
With the operation introduced in \Href{Section}{sec:asplund}, define
\[
f = \loginfconv{(\beta_1 \ast f_1)}{(\beta_2 \ast f_2)},
\]
and set
\[
(A \oplus \alpha, a) = \big((\beta_1 A_1 + \beta_2 A_2) \oplus 
\alpha_1^{\beta_1}\alpha_2^{\beta_2}, \beta_1 a_1 + \beta_2 a_2\big) .
\]
Then, $G\left[(A \oplus \alpha, a)\right] \leq f$  and the 
following inequality holds 
\begin{equation}\label{eq:minkowski_for_gaussians}
\int_{\Red} G[(A \oplus \alpha, a)] \ge \left(\int_{\Red} G_1 \right)^{\beta_1}
\left(\int_{\Red} G_2\right)^{\beta_2},
\end{equation}
with equality if and only if $A_1 = A_2.$
\end{lemma}
\begin{proof}
Fix $x \in \Red$ and define $x_1, x_2$ by 
\begin{equation}
\label{eq:check_contaiment_relation}
x_1 - a_1 =  A_1 A^{-1} (x - a), \quad x_2 -a_2 =  A_2 A^{-1} 
(x - a). 
\end{equation}

Since $G_1\leq f_1, G_2\leq f_2$,  we have  
\begin{equation}\label{eq:mink_gaussian_heights}
f_1 (x_1) \ge \alpha_1 e^{- \iprod{A_1^{-1}(x_1 - a_1)}{A_1^{-1}(x_1 - a_1)}}
\quad \text{and} \quad 
f_2 (x_2) \ge \alpha_2 e^{- \iprod{A_2^{-1}(x_2 - a_2)}{A_2^{-1}(x_2 - a_2)}}.
\end{equation}
Since  
$ \beta_1 x_1  + \beta_2 x_2   = x$ and  by  the definition of the Asplund sum, 
  we  have that
\[
f(x)    \ge f_1^{\beta_1}( x_1) f_2^{\beta_2}( x_2). 
 \]
Combining this with inequalities \eqref{eq:mink_gaussian_heights} and 
\eqref{eq:check_contaiment_relation}, we obtain
\begin{gather*}
{f(x)} \geq 
 \alpha_1^{\beta_1} \alpha_2^{\beta_2} 
 e^{- \beta_1 \iprod{A_1^{-1}(x_1 - a_1)}{A_1^{-1}(x_1 - a_1)}}   e^{- \beta_2 
\iprod{A_2^{-1}(x_2 - a_2)}{A_2^{-1}(x_2 - a_2)}} =\\
   \alpha_1^{\beta_1} \alpha_2^{\beta_2} 
   e^{-(\beta_1 + \beta_2) \iprod{A^{-1}(x - a)}{A^{-1}(x - a)}}  = 
\alpha_1^{\beta_1} \alpha_2^{\beta_2} 
   e^{- \iprod{A^{-1}(x - a)}{A^{-1}(x - a)}}.
 \end{gather*}
Thus, $G$ is below $f.$

We proceed with showing \eqref{eq:minkowski_for_gaussians}.
Substituting \eqref{eq:gaussian_volume}, inequality
\eqref{eq:minkowski_for_gaussians} takes the form
\[
 \pi^{d/2} \alpha_1^{\beta_1} \alpha_2^{\beta_2} \cdot \det{\left(\beta_1 A_1 + 
\beta_2 A_2 \right)} \ge
\pi^{d/2} \alpha_1^{\beta_1} \alpha_2^{\beta_2} \cdot \left(\det A_1 
\right)^{\beta_1} \left(\det A_2\right)^{\beta_2},
\]
or, equivalently, 
\[
 \det{\left(\beta_1 A_1 + \beta_2 A_2 \right)} \ge
\left(\det A_1 \right)^{\beta_1} \left(\det A_2\right)^{\beta_2}.
\]
Thus, inequality \eqref{eq:minkowski_for_gaussians} and its equality condition
follow from  Minkowski's determinant inequality 
\eqref{eq:minkowski_det_multipl_ineq} and the equality condition 
therein, completing the proof of \Href{Lemma}{lem:inclusion_gaussians}.
\end{proof}
Let $G_1$, represented by $(A_1  \oplus \alpha_1, a_1)$,     be a maximal 
integral Gaussian density that is below $f.$ 
 Assume that there is another Gaussian density $G_2$, represented by 
 $(A_{2} \oplus \alpha_2, a_2)$,  below $f$ with the same integral as
 $G_1.$
 Consider the Gaussian density $G$ represented by
 \[
\left(\frac{A_1+A_2}{2} \oplus \sqrt{\alpha_1 \alpha_2}, \frac{a_1+a_2}{2} 
\right) \in \ellips.
\]
By \eqref{eq:loginvconv_same_f} and \Href{Lemma}{lem:inclusion_gaussians},
we have that $G$ is below $f.$
Next, by the choice of the Gaussian densities, we also have
\[
\int_{\Red}{G} \leq 
\int_{\Red} {G_1} =
\sqrt{\int_{\Red} {G_1} \int_{\Red}{G_2}} =
\int_{\Red} {G_2},
\]
which, combined with 
 \Href{Lemma}{lem:inclusion_gaussians}, yields
\[
\int_{\Red}{G} = 
\int_{\Red} {G_1} =
\int_{\Red} {G_2}, 
\text{ and } A_1 = A_2.
\]
Combined with \eqref{eq:gaussian_volume}, it implies $\alpha_1 = \alpha_2$.
This completes the proof of \Href{Theorem}{thm:infinity_ellipsoid}.
\end{proof}

\subsection{Uniqueness does not hold for 
\texorpdfstring{$s=\infty$}{s=infinity}}\label{sec:non_unique_gaussian}

In this subsection, first, we show that it is possible that two Gaussian 
densities $G[(A 
\oplus \alpha, a_1)]$ and $G[(A \oplus \alpha, a_2)]$ with $a_1 \neq a_2$ below 
a proper log-concave function $f$ are  
of maximal integral. Next in \Href{Proposition}{prop:gaussian_uniqeness}, we 
show that uniqueness holds for a certain important 
class of log-concave functions.

Consider the Asplund sum 
\[
f = \loginfconv{G[(A \oplus \alpha, a)]}{\chi_{K}},
\]
where $(A \oplus \alpha, a) \in \ellips$ and $K$ is a compact convex set in 
$\Red.$ We claim that the set of the maximal
integral Gaussian densities that are below $f$ is
\[
\left\{ 
G[(A \oplus \alpha, a_m)] \st a_m \in a + K
\right\}.
\] 
To see this, one observes that if $G[(A^\prime \oplus \alpha^\prime , 
a^\prime)]\leq f$, then $A^\prime\preceq A$. The claim now follows from 
\eqref{eq:gaussian_volume}.

Uniqueness of the maximal Gaussian density below $f$ holds for an 
important class of log-concave functions.

\begin{prp}\label{prop:gaussian_uniqeness}
Let $K \subset \Red$ be a compact convex set containing the origin in the 
interior, and let 
$\norm{\cdot}_{K}$ denote the gauge function of $K$, that is, 
$\norm{x}_{K}=\inf\{\lambda>0\st x\in\lambda K\}$. Let $A\! \left(\ball{d} 
\right)$ be the largest volume origin centered ellipsoid contained in $K$, 
where 
$A$ is a positive definite matrix. Then the Gaussian density represented by 
$\left(A \oplus 1, 0 \right)$ is the unique maximal integral Gaussian density 
below the log-concave function $e^{-\norm{x}_K^2}.$
\end{prp}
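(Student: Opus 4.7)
The plan is to reduce the claim to the classical uniqueness of the largest volume origin-centered ellipsoid inside $K$. Write $f(x)=e^{-\norm{x}_K^2}$ and let $G'=G[(A'\oplus\alpha',a')]$ be any Gaussian density with $G'\leq f$. The goal is to prove two inequalities, $\alpha'\leq 1$ and $A'(\ball{d})\subseteq K$, and then appeal to the maximality of $A(\ball{d})$ among origin-centered ellipsoids in $K$.

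First, I would check that $G[(A\oplus 1,0)]\leq f$. Since $A(\ball{d})\subseteq K$ and both $\norm{\cdot}_K$ and $|A^{-1}\cdot|$ are positively $1$-homogeneous, the inclusion of the unit balls gives $\norm{x}_K\leq |A^{-1}x|$ for every $x\in\Red$, hence $e^{-|A^{-1}x|^2}\leq e^{-\norm{x}_K^2}$. This shows that a maximal integral Gaussian density below $f$ exists (Theorem~\ref{thm:infinity_ellipsoid} applies) and its integral is at least $\pi^{d/2}\det A$.

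Next I would extract two constraints from the pointwise inequality $\alpha' e^{-|A'^{-1}(x-a')|^2}\leq e^{-\norm{x}_K^2}$, equivalently
\[
\norm{x}_K^2\leq |A'^{-1}(x-a')|^2+\log(1/\alpha') \quad \text{for all }x\in\Red.
\]
Setting $x=a'$ yields $\alpha'\leq e^{-\norm{a'}_K^2}\leq 1$. Replacing $x$ by $tx$ for a fixed $x$, dividing by $t^2$ and letting $t\to\infty$ yields $\norm{x}_K^2\leq |A'^{-1}x|^2$ for every $x$, which is exactly $A'(\ball{d})\subseteq K$.

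Finally, I would invoke the fact that $A(\ball{d})$ is the unique largest volume origin-centered ellipsoid in $K$: since $A'(\ball{d})\subseteq K$ one has $\det A'\leq\det A$, with equality only if $A'=A$ (the uniqueness part follows from Minkowski's determinant inequality \eqref{eq:minkowski_det_ineq} applied to the midpoint ellipsoid $\tfrac12(A+A')(\ball{d})\subseteq K$, in exact analogy with the uniqueness proof for the John ellipsoid). Combining with $\alpha'\leq 1$ and formula \eqref{eq:gaussian_volume},
\[
\int_{\Red}G'=\alpha'\pi^{d/2}\det A'\leq \pi^{d/2}\det A=\int_{\Red}G[(A\oplus 1,0)].
\]
Equality forces $\alpha'=1$, $A'=A$, and then $\alpha'\leq e^{-\norm{a'}_K^2}$ forces $\norm{a'}_K=0$, hence $a'=0$ because $\norm{\cdot}_K$ vanishes only at the origin (as $0\in\inter{K}$). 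The only delicate point is the asymptotic step, which however is routine once one observes that the additive constants $\log(1/\alpha')$ and the linear-in-$t$ cross terms arising from $|A'^{-1}(tx-a')|^2$ are negligible compared to the quadratic terms as $t\to\infty$.
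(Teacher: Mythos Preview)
Your proposal is correct and follows essentially the same approach as the paper: both extract the inclusion $A'(\ball{d})\subseteq K$ from the pointwise inequality by a scaling/asymptotic argument (the paper phrases it as a contradiction with a specific $y\in A'(\inter{\ball{d}})\setminus K$ and $x=\vartheta y$, you do it directly for all $x$ via $x\mapsto tx$), then combine $\det A'\leq\det A$ and $\alpha'\leq 1$ with \eqref{eq:gaussian_volume}. Your write-up is in fact a bit more explicit than the paper's, which omits the verification that $G[(A\oplus 1,0)]\leq f$ and the equality analysis forcing $a'=0$.
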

\begin{proof}
Let $(A^\prime \oplus \alpha^\prime, a^\prime)\in\ellips$ be such that 
$G[(A^\prime \oplus \alpha^\prime, a^\prime)]\leq f$. 
First, we show that $A^\prime\ball{d}\subseteq K$. Indeed, we have
\[
\iprod{(A^\prime)^{-1}(x-a^\prime)}{(A^\prime)^{-1}(x-a^\prime)}
-\ln(\alpha^\prime)
\geq \norm{x}_K^2 
\]
for every $x\in\Red$. Suppose for a contradiction that there is a $y\in 
A^\prime\left(\inter{\ball{d}}\right)\setminus K$. Consider $x=\vartheta y$, 
and 
substitute into the previous inequality. We obtain 
\[
\vartheta^2|(A^\prime)^{-1}y|^2-2\vartheta
\iprod{(A^\prime)^{-1}a^\prime}{(A^\prime)^{-1}y}+2\left|(A^\prime)^{-1}
a^\prime\right|^2-\ln(\alpha^\prime)
\geq \vartheta^2\norm{y}_K^2>\vartheta^2. 
\]
As $\left|(A^\prime)^{-1}y\right|< 1$, letting $\vartheta$ tend to infinity, 
we obtain a contradiction. Thus, $A^\prime\ball{d}\subseteq K$.

Hence, $\det(A^\prime)\leq\det(A)$. On the other hand, $\alpha^\prime\leq 
\norm{f}=1$. The Proposition now easily follows from \eqref{eq:gaussian_volume}.
\end{proof}

\subsection{Approximation of a largest Gaussian by John 
\texorpdfstring{$s$}{s}-ellipsoids}
\begin{thm}\label{thm:s_infty_approx}
Let $f : \Red \to [0, \infty)$ be a proper log-concave function. Then the 
following hold.
\begin{enumerate}
\item\label{item:gaussianexists} 
There is a Gaussian density below $f$ if and only if
$\limsup\limits_{s \to \infty} \smeasure{\sellbody{f}} > 0.$
\item\label{item:gaussianislimit} 
If there is a Gaussian density below $f$, then 
$\lim\limits_{s \to \infty} \smeasure{\sellbody{f}} = \limsup\limits_{s \to 
\infty} \smeasure{\sellbody{f}} $ 
and there is a sequence $\{s_i\}_1^{\infty}$ of positive reals with 
$\lim\limits_{i \to \infty} s_i = \infty$ such that the
\jsfunctions{} $\selldense[s_i]{f}$  converge uniformly  to a  Gaussian density 
which is of maximal 
integral among those Gaussian densities that are below $f.$
\item\label{item:gaussianheightbound} 
If there is a Gaussian density below $f$, then 
any Gaussian density which is of maximal 
integral among those Gaussian densities that are below $f$ is of height at least 
$\norm{f}e^{-d}$.
\end{enumerate}
\end{thm}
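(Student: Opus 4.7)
My approach for all three parts rests on the identification, via the elementary limit $(1 - 2t/s)^{s/2} \to e^{-t}$, of rescaled John \tpdfs-ellipsoids with Gaussian densities. For the ``only if'' direction of \ref{item:gaussianexists}, suppose a Gaussian $G = G[(B \oplus \beta, a)]$ lies below $f$. I would consider the ellipsoid $\upthing{E}_s$ represented by $(\sqrt{s/2}\,B \oplus \beta^{1/s}, a)$; using $1-t \leq e^{-t}$, the $s$-th power of its height function is pointwise at most $\beta\, e^{-|B^{-1}(x-a)|^2} = G(x) \leq f(x)$, so $\upthing{E}_s \subset \slift{f}$. By \eqref{eq:sfunc_of_ellipsoida} and \eqref{eq:volbs_limit_at_infinity},
\[
\smeasure{\upthing{E}_s} \;=\; \volbs\, \beta\, (s/2)^{d/2}\, \det B \;\longrightarrow\; \pi^{d/2}\beta \det B \;=\; \int_{\Red} G,
\]
which yields $\liminf_{s\to\infty}\smeasure{\sellbody{f}} \geq \int_{\Red} G > 0$.

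For the converse direction of \ref{item:gaussianexists} and for \ref{item:gaussianislimit}, I pick a sequence $s_i \to \infty$ with $\smeasure[s_i]{\sellbody[s_i]{f}} \to L := \limsup_{s\to\infty}\smeasure{\sellbody{f}} > 0$ and write $\sellbody[s_i]{f}$ as $(A_{s_i}\oplus \alpha_{s_i})\ball{d+1} + a_{s_i}$, rescaling via $B_{s_i} = A_{s_i}/\sqrt{s_i/2}$ and $\beta_{s_i} = \alpha_{s_i}^{s_i}$. The main obstacle is proving compactness of the rescaled parameters, since \Href{Lemma}{lem:boundedness} can only be applied with a comparison function $g = \shf{\ball{2}}^s$ that changes with $s$, so its bounds degenerate. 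I would instead bound each parameter directly: $\beta_{s_i} \in [e^{-d}\norm{f},\norm{f}]$ by \Href{Lemma}{lem:height} and triviality; $\det B_{s_i}$ is pinched between positive constants using $\smeasure[s_i]{\sellbody[s_i]{f}} = \volbs[s_i]\beta_{s_i}(s_i/2)^{d/2}\det B_{s_i} \to L$ and \eqref{eq:volbs_limit_at_infinity}; the operator norm $\norm{B_{s_i}}$ is controlled by integrating the one-dimensional inequality $\beta_{s_i}\bigl(1 - (t/\norm{A_{s_i}})^2\bigr)^{s_i/2} \leq f(a_{s_i}+tv)$ along an eigenvector $v$ of $A_{s_i}$ corresponding to $\norm{A_{s_i}}$ and invoking the uniform upper bound on integrals of a proper log-concave function over lines used in the proof of \Href{Lemma}{lem:boundedness}; and $|a_{s_i}|$ is bounded because $f(a_{s_i}) \geq \beta_{s_i} \geq e^{-d}\norm{f}$ forces $a_{s_i}$ into a fixed bounded super-level set of $f$. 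Passing to a convergent subsequence $(B_{s_i},\beta_{s_i},a_{s_i}) \to (B,\beta,a)$ with $B$ positive definite and letting $s_i \to \infty$ in $\shf{\sellbody[s_i]{f}}^{s_i} \leq f$ produces $G := G[(B \oplus \beta, a)] \leq f$, so a Gaussian lies below $f$ and $\int_{\Red}G = L$. Applying the first paragraph to any maximal Gaussian $G^*$ below $f$ (which exists by \Href{Theorem}{thm:infinity_ellipsoid}) gives $\int G^* \leq \liminf_{s\to\infty} \smeasure{\sellbody{f}}$, while by definition of $L$ we have $\limsup_{s\to\infty} \smeasure{\sellbody{f}} = L = \int G \leq \int G^*$, forcing equalities throughout and showing $\lim_{s\to\infty} \smeasure{\sellbody{f}} = L$ with $G$ itself maximal. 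Uniform convergence of $\selldense[s_i]{f}$ to $G$ on $\Red$ then follows from the pointwise convergence combined with the uniform majorant $\selldense[s_i]{f}(x) \leq \beta_{s_i}\, e^{-|B_{s_i}^{-1}(x-a_{s_i})|^2}$ (immediate from $1-t \leq e^{-t}$), which supplies both equicontinuity on compacta and a uniform Gaussian tail.

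Part \ref{item:gaussianheightbound} is then immediate: the height of $\selldense[s_i]{f}$ equals $\alpha_{s_i}^{s_i} = \beta_{s_i}$, which by \Href{Lemma}{lem:height} is at least $e^{-d}\norm{f}$, so the limit Gaussian $G$ has height $\beta \geq e^{-d}\norm{f}$; by \Href{Theorem}{thm:infinity_ellipsoid} every Gaussian of maximal integral is a translate of $G$ and hence has the same height.
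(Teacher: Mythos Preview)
Your proof is correct and follows essentially the same strategy as the paper: approximate Gaussians from below by rescaled $s$-ellipsoid height functions, establish compactness of the rescaled parameters $(B_s,\beta_s,a_s)$, and pass to the limit. The paper factors these steps into separate lemmas (\Href{Lemma}{lem:limit_of_balls}, \Href{Lemma}{lem:john_gaussian_approx}, \Href{Lemma}{lem:gaussian_part_limit}, \Href{Claim}{claim:norm_bound}), while you argue them inline, but the substance is the same.

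One small correction: your remark that the bounds in \Href{Lemma}{lem:boundedness} ``degenerate'' because $g = \shf{\ball{2}}^s$ varies with $s$ is not quite right. The paper does apply that lemma with exactly this $g$ (see \Href{Claim}{claim:norm_bound}): since $\int_{\R} \shf{\ball{2}}^s \sim c\,s^{-1/2}$ and $\int_{\Red} \shf{\ball{2}}^s(|x|)\,\di x = \volbs \sim c\,s^{-d/2}$ by \eqref{eq:volbs_limit_at_infinity}, both sides of \eqref{eq:comparison_operator} scale like $\sqrt{s}$, giving precisely the relation $\rho_1 I \prec A_s/\sqrt{s} \prec \rho_2 I$ you need. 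Your direct derivation of the same bounds is of course equally valid.
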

In the rest of this subsection, we prove \Href{Theorem}{thm:s_infty_approx}.

We start by describing the limit of $s$-marginals of origin centered ellipsoids.
\begin{lemma}\label{lem:limit_of_balls}
Let  $\{s_i\}_1^{\infty}$ be a sequence of positive reals with 
$\lim\limits_{i \to \infty} s_i = \infty $, let $\{A_i\}_1^{\infty}$ be a 
sequence 
of positive definite operators such that $\lim\limits_{i \to \infty} 
\frac{A_i}{\norm{A_i}} = A,$ where $A$ is positive definite,  and  
the ellipsoids $\upthing{E}_{i},$ represented by 
$(A_i \oplus 1, 0),$ satisfy  
$\lim\limits_{i \to \infty} \smeasure[s_i]{\upthing{E}_i} =  \mu > 0.$ Then, 
the ${s_i}$-th power of the height functions  
$\left(\shf{\upthing{E}_i}\right)^{s_i}$
  converge uniformly to the Gaussian density $G[(A_\infty \oplus 1, 0)],$  where
\[
A_\infty = \frac{1}{\sqrt{\pi}} \left(\frac{\mu }{\det A }\right)^{1/d}
{A}.
\]  
\end{lemma}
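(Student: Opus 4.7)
The plan is to extract the asymptotic rate $\norm{A_i}\sim c\,s_i^{1/2}$ from the hypothesis on $s$-volumes and \eqref{eq:volbs_limit_at_infinity}, then to pass to the limit in $(1-\iprod{A_i^{-1}x}{A_i^{-1}x})^{s_i/2}$ on compact sets via the expansion $\log(1-t)=-t+O(t^2)$, and finally to promote this to uniform convergence on all of $\Red$ by a common Gaussian majorant. The last step is the main (but modest) obstacle.

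\medskip

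\noindent\emph{Step 1 (size of $\norm{A_i}$).} I will write $A_i=\norm{A_i}B_i$, so that $B_i=A_i/\norm{A_i}\to A$ and $\det B_i\to \det A>0$. By \eqref{eq:sfunc_of_ellipsoida}, $\smeasure[s_i]{\upthing{E}_i}=\volbs[s_i]\,\norm{A_i}^d\det B_i$. Combined with the hypothesis $\smeasure[s_i]{\upthing{E}_i}\to\mu$ and with \eqref{eq:volbs_limit_at_infinity}, this yields
\[
\frac{s_i}{2\norm{A_i}^2}\;\longrightarrow\;\pi\!\left(\frac{\det A}{\mu}\right)^{2/d}.
\]

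\noindent\emph{Step 2 (compact uniform convergence).} I will set $\phi_i(x)=\iprod{A_i^{-1}x}{A_i^{-1}x}=\norm{A_i}^{-2}\iprod{B_i^{-1}x}{B_i^{-1}x}$, so that $\shf{\upthing{E}_i}(x)^{s_i}=(1-\phi_i(x))^{s_i/2}$ on $A_i\ball{d}$ and $0$ elsewhere. On any ball $\{|x|\leq R\}$ one has $\phi_i(x)=O(1/s_i)$ uniformly, and the expansion $\log(1-t)=-t+O(t^2)$ gives
\[
\tfrac{s_i}{2}\log(1-\phi_i(x))=-\tfrac{s_i}{2\norm{A_i}^2}\iprod{B_i^{-1}x}{B_i^{-1}x}+O(1/s_i).
\]
By Step~1 and $B_i^{-1}\to A^{-1}$, the main term converges uniformly on $\{|x|\leq R\}$ to $\pi(\det A/\mu)^{2/d}\iprod{A^{-1}x}{A^{-1}x}=\iprod{A_\infty^{-1}x}{A_\infty^{-1}x}$; exponentiating will produce uniform convergence to $G[(A_\infty\oplus 1,0)]$ on every compact subset of $\Red$.

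\noindent\emph{Step 3 (global uniform convergence).} The elementary inequality $\log(1-t)\leq -t$ on $[0,1)$ gives
\[
\shf{\upthing{E}_i}(x)^{s_i}\;\leq\;\exp\!\left(-\tfrac{s_i}{2\norm{A_i}^2}\iprod{B_i^{-1}x}{B_i^{-1}x}\right)\quad\text{on }A_i\ball{d},
\]
while the left-hand side is $0$ outside. Because $s_i/(2\norm{A_i}^2)$ has a positive limit (Step~1) and $B_i^{-1}\to A^{-1}\succ 0$, there will exist $c>0$ and $i_0$ such that $\shf{\upthing{E}_i}(x)^{s_i}\leq e^{-c|x|^2}$ for every $x\in\Red$ and every $i\geq i_0$. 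The target $G[(A_\infty\oplus 1,0)]$ enjoys the same kind of Gaussian decay, so given $\epsilon>0$ I can choose $R$ large enough that both majorants fall below $\epsilon/2$ outside $R\ball{d}$, and combine this with Step~2 inside $R\ball{d}$ to obtain $\sup_{x\in\Red}\big|\shf{\upthing{E}_i}(x)^{s_i}-G[(A_\infty\oplus 1,0)](x)\big|<\epsilon$ for all sufficiently large~$i$.
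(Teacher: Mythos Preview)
Your proof is correct and follows essentially the same strategy as the paper: extract the asymptotic $s_i/(2\norm{A_i}^2)\to\pi(\det A/\mu)^{2/d}$ from \eqref{eq:sfunc_of_ellipsoida} and \eqref{eq:volbs_limit_at_infinity}, establish uniform convergence on compacta, and then upgrade to all of $\Red$. The only noteworthy difference is in the last step: the paper argues the tail control via the convergence of integrals $\smeasure[s_i]{\upthing{E}_i}\to\int g$ combined with compact convergence (an argument that is left rather terse and implicitly leans on log-concavity to pass from small $L^1$ tails to small $L^\infty$ tails), whereas your use of the elementary bound $\log(1-t)\le -t$ to produce a common Gaussian majorant $e^{-c|x|^2}$ is more direct and self-contained.
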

\begin{proof}[Proof of Lemma~\ref{lem:limit_of_balls}]
The limit $\frac{A_i}{\norm{A_i}} \to A$ as $i \to \infty$ yields two 
properties,
\begin{equation}
\label{eq:convergence_det}
\lim\limits_{i \to \infty} \frac{\det{A_i}}{\norm{A_i}^d} = \det A, 
\end{equation}
and 
\begin{equation}
\label{eq:convergence_reverse_matrix}
\lim\limits_{i \to \infty} {\norm{A_i}} A_i^{-1} = A^{-1} . 
\end{equation}

By \eqref{eq:sfunc_of_ellipsoida} and \eqref{eq:volbs_limit_at_infinity}, we 
obtain 
\begin{gather*}
\mu = \lim\limits_{i \to \infty} \smeasure[s_i]{\upthing{E}_i} = 
\lim\limits_{i \to \infty} \volbs[s_i] \det A_i = 
\pi^{d/2} \lim\limits_{i \to \infty} \left(\frac{s_i}{2}
 \right)^{-d/2} \det A_i.
\end{gather*}
Combining this with \eqref{eq:convergence_det}, we get
\begin{equation}
\label{eq:convergense_s_norm_operator}
\lim\limits_{i \to \infty} \frac{s_i}{2}   \frac{1}{\norm{A_i}^2}= 
{\pi}\left(\frac{\det A}{\mu}\right)^{2/d}.
\end{equation}

We obtain that $\lim\limits_{i \to \infty} \norm{A_i}=\infty$, and hence by 
\eqref{eq:convergence_reverse_matrix}, the smallest eigenvalue of $A_i$ tends 
to 
infinity. 

It follows that for any fixed $\rho > 0$ and sufficiently large $i$, we have 
$\rho \ball{d}\subset A_i\ball{d}$, and hence,
\[
\left(\shf{\upthing{E}_i}(x)\right)^{s_i} = 
\left(1 - \iprod{A_i^{-1}x}{A_i^{-1}x}\right)^{s_i/2} =
\left(1 - 
\frac{\iprod{\norm{A_i}A_i^{-1}x}{\norm{A_i}A_i^{-1}x}}{\norm{A_i}^2}\right)^{
\norm{A_i}^2 \cdot \frac{s_i}{2}   \frac{1}{\norm{A_i}^2}}
\]
for all  $x \in \rho \ball{d}$.

By  \eqref{eq:convergense_s_norm_operator} and 
\eqref{eq:convergence_reverse_matrix}, for any $1 > \epsilon > 0,$ there exists 
$i_\epsilon$ such that 
the  inequalities 
\[
\left(1 - \frac{(1 + \epsilon)\iprod{A^{-1} x}{A^{-1} 
x}}{\norm{A_i}^2}\right)^{\norm{A_i}^2 \left[{\pi}\left(\frac{\det 
A}{\mu}\right)^{2/d}(1 + 
\epsilon)\right]}
\leq
  \left(\shf{\upthing{E}_i}(x)\right)^{s_i},
\]
\[
  \left(\shf{\upthing{E}_i}(x)\right)^{s_i}
  \leq
 \left(1 - \frac{(1 - \epsilon)\iprod{A^{-1} x}{A^{-1} 
x}}{\norm{A_i}^2}\right)^{\norm{A_i}^2 \left[{\pi}\left(\frac{\det 
A}{\mu}\right)^{2/d}(1 - 
\epsilon)\right]}.
\]
hold for all $x \in \rho \ball{d}$ and for all $i > i_\epsilon.$
Since $\lim\limits_{i \to \infty} \norm{A_i}=\infty$, this implies that the 
sequence of functions 
$\left\{\left(\shf{\upthing{E}_i}(x)\right)^{s_i}\right\}_{i=1}^{\infty}$  
converges uniformly to $g(x)=e^{- {\pi}\left(\frac{\det 
A}{\mu}\right)^{2/d}\iprod{A^{-1} x}{A^{-1} x}}$ on $ \rho \ball{d}.$ 
Since $\sup_{x\in\Red \setminus \rho \ball{d}}g(x)$ tends to zero as $\rho \to 
\infty$ and 
$\lim\limits_{i \to \infty} \smeasure[s_i]{\upthing{E}_i}=\int_{\Red}g$, 
 we conclude that  $\left\{\left(\shf{\upthing{E}_i}(x)\right)^{s_i}\right\}$ is 
uniformly convergent on $\Red$
This completes the proof 
of \Href{Lemma}{lem:limit_of_balls}.
\end{proof}
\begin{lemma}\label{lem:john_gaussian_approx}
Let $G$ be a Gaussian density. Then, 
the \jsfunctions{} $\selldense{G}$ converge  uniformly to $G$ on $\Red$ as 
$s \to \infty.$
\end{lemma}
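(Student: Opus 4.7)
The plan is to first reduce to the case of the standard Gaussian $G(x)=e^{-|x|^2}$ by invoking the scaling properties \eqref{eq:scaleellips} of \jsfunctions{} under dilation, vertical rescaling, and affine maps of $\Red$. Because this standard $G$ is invariant under every orthogonal transformation of $\Red$, the uniqueness of the \jsellipsoid{} (\Href{Theorem}{thm:johnunicity}) forces $\sellbody{G}$ to be represented by $(\lambda_s I\oplus \alpha_s,0)$ for some scalars $\lambda_s,\alpha_s>0$. The task then reduces to pinning down the asymptotics of these two scalar parameters as $s\to\infty$, and deducing uniform convergence of the corresponding $s$-th power of the height function to $G$.

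The next step is to exhibit a good comparison ellipsoid. I take $\upthing{F}_s$ represented by $(\sqrt{s/2}\,I\oplus 1,0)$. The inequality $\ln(1-u)\leq -u$ yields $(1-2|x|^2/s)^{s/2}\leq e^{-|x|^2}$ on $\sqrt{s/2}\,\ball{d}$, so $\shf{\upthing{F}_s}^s\leq G$, i.e.\ $\upthing{F}_s\subset\slift{G}$. Using \eqref{eq:sfunc_of_ellipsoida} and the asymptotic \eqref{eq:volbs_limit_at_infinity}, I get $\smeasure{\upthing{F}_s}=\volbs(s/2)^{d/2}\to\pi^{d/2}=\int_{\Red} G$. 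On the other hand, the containment $\sellbody{G}\subset\slift{G}$ gives $\selldense{G}\leq G$ pointwise, hence $\smeasure{\sellbody{G}}\leq\pi^{d/2}$. Combining this with $\smeasure{\sellbody{G}}\geq\smeasure{\upthing{F}_s}$ (maximality of the \jsellipsoid{}) shows that both quantities tend to $\pi^{d/2}$ and their difference tends to $0$.

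The main obstacle is that the identity $\smeasure{\sellbody{G}}=\volbs\alpha_s^s\lambda_s^d$ supplies only a single constraint on the pair $(\alpha_s^s,\lambda_s)$, whereas I need the asymptotics of each factor separately. To break this degeneracy, I will apply \Href{Lemma}{lem:stability_volume_s-ellipsoid} to the pair $(\sellbody{G},\upthing{F}_s)$: both ellipsoids have normalized shape $I$ and center $0$, and $\norm{G}=1$, so all but one term in the stability estimate vanish and it collapses to $|\alpha_s^s-1|\leq k_C\sqrt{\epsilon}$ with $\epsilon=\smeasure{\sellbody{G}}-\smeasure{\upthing{F}_s}\to 0$. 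Hence $\alpha_s^s\to 1$. Feeding this back into the $s$-volume identity together with \eqref{eq:volbs_limit_at_infinity}, I conclude $\lambda_s/\sqrt{s/2}\to 1$.

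Finally, to upgrade these scalar asymptotics to uniform convergence, I will apply \Href{Lemma}{lem:limit_of_balls} along an arbitrary sequence $s_i\to\infty$. Setting $\upthing{G}_i$ represented by $(\lambda_{s_i}I\oplus 1,0)$, I have $A_i/\norm{A_i}=I\to I$ and $\smeasure[s_i]{\upthing{G}_i}=\smeasure[s_i]{\sellbody[s_i]{G}}/\alpha_{s_i}^{s_i}\to\pi^{d/2}$, so \Href{Lemma}{lem:limit_of_balls} (with $A=I$, $\mu=\pi^{d/2}$, for which the limit becomes $A_\infty=I$) yields uniform convergence of $(\shf{\upthing{G}_i})^{s_i}$ to $G[(I\oplus 1,0)]=G$ on $\Red$. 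Multiplying by $\alpha_{s_i}^{s_i}\to 1$ preserves uniform convergence and gives $\selldense[s_i]{G}\to G$ uniformly. Since the sequence was arbitrary, $\selldense{G}\to G$ uniformly as $s\to\infty$.
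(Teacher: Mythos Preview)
Your proof is correct and follows the same overall architecture as the paper's: reduce by affine and scaling invariance to a standard Gaussian, use symmetry and uniqueness to force the \jsellipsoid{} to be a round ball $(\lambda_s I\oplus\alpha_s,0)$, exhibit the explicit comparison ellipsoid $(\sqrt{s/2}\,I\oplus 1,0)$ inside $\slift{G}$ whose $s$-volume converges to $\int G$, and then feed the resulting asymptotics into \Href{Lemma}{lem:limit_of_balls} to obtain uniform convergence.

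The one genuine difference is how you pin down $\alpha_s^s\to 1$. The paper deduces this in one line from $\int\selldense{G}\to\int G$ (the implicit argument being that any subsequential Gaussian limit below $G$ with the same integral must coincide with $G$, hence have height $1$). You instead invoke the stability \Href{Lemma}{lem:stability_volume_s-ellipsoid} applied to the pair $(\sellbody{G},\upthing{F}_s)$: because both ellipsoids have normalized shape $I$ and center $0$, two of the three terms in \eqref{eq:lem_stability_theta_bound} vanish and you read off $|\alpha_s^s-1|\le k_C\sqrt{\epsilon}$ directly. This is a heavier tool, but it makes the step fully explicit and quantitative, and it exploits nicely that the constants $\epsilon_C,k_C$ in that lemma are uniform in $s$. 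Either route is fine; yours is more self-contained.
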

In order to prove Lemma~\ref{lem:john_gaussian_approx}, we assume that
$G(x) = e^{-{|x|^2}/{2}}.$  First, we relax the condition and prove that it 
suffices to approximate $G$ by any sequence of suitable height functions of 
$d$-ellipsoids.
\begin{claim}\label{claim:convenient_approx_gauss_john}
If there is a function $c : [1, \infty) \to [1, \infty)$  such that  
\begin{equation}
\label{eq:convergence_j_standard_gaussian}
\lim\limits_{s \to \infty}  \int_{\R^d} \shf{(c(s)I \oplus 1,0)}^{s} = 
\int_{\R^d} G = {(2\pi)}^{d/2},
\end{equation}
 and  
$
 \shf{(c(s)I \oplus 1,0)}^{s} \leq G
$
{for all}  $s \geq 1,$
then the functions $\selldense{G}$ converge  uniformly to $G$ on $\Red$ as 
$s \to \infty.$
\end{claim}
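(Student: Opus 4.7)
The plan is to exploit the rotational symmetry of $G(x)=e^{-|x|^2/2}$ and the uniqueness of the John $s$-ellipsoid (\Href{Theorem}{thm:johnunicity}) to parametrize $\sellbody{G}$ by $(\lambda_s I\oplus\alpha_s,0)$ for some $\lambda_s>0$ and $\alpha_s\in(0,1]$; then the whole argument reduces to proving $\alpha_s^s\to 1$ and that $\shf{(\lambda_s I\oplus 1,0)}^s$ converges uniformly to $G$, since $\selldense{G}(x)=\alpha_s^s\cdot\shf{(\lambda_s I\oplus 1,0)}^s(x)$.

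The first step is to sandwich $\smeasure{\sellbody{G}}$. Writing $\upthing{E}(s)$ for the ellipsoid represented by $(c(s)I\oplus 1,0)$, the hypothesis $\shf{\upthing{E}(s)}^s\le G$ gives $\upthing{E}(s)\subset\slift{G}$, hence by \eqref{eq:sfunc_of_ellipsoida},
\[
\volbs\, c(s)^d \;=\; \smeasure{\upthing{E}(s)} \;\le\; \smeasure{\sellbody{G}} \;=\; \int_{\Red}\selldense{G} \;\le\; \int_{\Red} G \;=\; (2\pi)^{d/2}.
\]
By the hypothesis \eqref{eq:convergence_j_standard_gaussian}, the leftmost quantity tends to $(2\pi)^{d/2}$, and therefore so does $\smeasure{\sellbody{G}}$.

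Next, I would apply the stability lemma (\Href{Lemma}{lem:stability_volume_s-ellipsoid}) with $f=G$, $C=(2\pi)^{d/2}$, and the pair of ellipsoids $\sellbody{G}$ and $\upthing{E}(s)$, choosing $\epsilon$ to be a common upper bound for $(2\pi)^{d/2}-\smeasure{\sellbody{G}}$ and $\smeasure{\sellbody{G}}-\smeasure{\upthing{E}(s)}$, both of which tend to $0$. Since $\norm{G}=1$ and the heights are $\alpha_s$ and $1$, the lemma yields $|\alpha_s^s-1|\le k_C\sqrt{\epsilon}\to 0$. Combined with $\volbs\lambda_s^d\alpha_s^s=\smeasure{\sellbody{G}}\to(2\pi)^{d/2}$, this produces $\volbs\lambda_s^d\to(2\pi)^{d/2}$ as well.

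Finally, for any sequence $s_i\to\infty$, I would invoke \Href{Lemma}{lem:limit_of_balls} with the matrices $A_i=\lambda_{s_i}I$ and the ellipsoids $\upthing{F}_i$ represented by $(\lambda_{s_i}I\oplus 1,0)$. Here $A_i/\norm{A_i}=I$ and $\smeasure[s_i]{\upthing{F}_i}=\volbs[s_i]\lambda_{s_i}^d\to(2\pi)^{d/2}$, so the lemma gives $A_\infty=\tfrac{1}{\sqrt{\pi}}\bigl((2\pi)^{d/2}\bigr)^{1/d}I=\sqrt{2}\,I$, whence $\shf{\upthing{F}_i}^{s_i}$ converges uniformly to $G[(\sqrt{2}\,I\oplus 1,0)](x)=e^{-|x|^2/2}=G(x)$. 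Multiplying by the uniform scalar factor $\alpha_s^s\to 1$ then yields uniform convergence of $\selldense{G}$ to $G$. The main subtlety is ensuring the rotation-invariance argument for the parametrization $(\lambda_s I\oplus\alpha_s,0)$ is clean, and making sure the two limits $\alpha_s^s\to 1$ and $\volbs\lambda_s^d\to(2\pi)^{d/2}$ are extracted in the correct order; the rest is a direct assembly of \Href{Lemma}{lem:stability_volume_s-ellipsoid} and \Href{Lemma}{lem:limit_of_balls}.
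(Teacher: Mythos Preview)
Your proof is correct and follows essentially the same architecture as the paper's: the symmetry reduction to $(\lambda_s I\oplus\alpha_s,0)$, the sandwich $\smeasure{\upthing{E}(s)}\le\smeasure{\sellbody{G}}\le\int G$, and the final appeal to \Href{Lemma}{lem:limit_of_balls} are all identical. The only real difference is in how you establish $\alpha_s^s\to 1$: the paper asserts this directly from $\int\selldense{G}\to\int G$ (a step which, as written in the paper, is somewhat terse and arguably needs the kind of justification you supply), whereas you invoke \Href{Lemma}{lem:stability_volume_s-ellipsoid} with the pair $\sellbody{G}$, $\upthing{E}(s)$ to read off $|\alpha_s^s-1|\le k_C\sqrt{\epsilon}$. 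Your route is a clean and legitimate way to nail down that step; the rest is the same argument.
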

\begin{proof}[Proof of Claim~\ref{claim:convenient_approx_gauss_john}]
By Theorem \ref{thm:johnunicity}, the \jsfunction{} $\selldense{G}$ of $G$ 
exists and is unique for any positive $s.$  By symmetry, we see that 
$\selldense{G}$ is of the form 
$\shf{\left(\beta(s) I \oplus \alpha(s), 0 \right)}^s,$
 where $\beta \st [1, \infty) \to (0, \infty)$ and 
$\alpha \st [1, \infty) \to (0, 1).$ By 
\eqref{eq:convergence_j_standard_gaussian}, we obtain that
\[
\lim\limits_{s \to \infty}  \int_{\R^d} \selldense{G} = \int_{\R^d} G.
\]
This implies that $\alpha(s) \to 1$ as $s \to \infty.$
Hence,  the functions 
$\selldense{G}= \shf{\left(\beta(s) I \oplus \alpha(s), 0 \right)}^{s}$ 
converge  uniformly to the same function as 
the functions $\shf{\left(\beta(s) I \oplus 1, 0 \right)}^s$ as 
$s\rightarrow\infty$ (if the latter sequence converges). However, by Lemma 
\ref{lem:limit_of_balls}, the functions 
 $\shf{\left(\beta(s) I \oplus 1, 0 \right)}^s$ converge uniformly to $G$ as $s 
\to \infty.$
\end{proof}
It is not hard to find a suitable function $c(s).$
\begin{claim}\label{claim:explicit_approx_gauss_john}
Let $c(s) = \sqrt{s}.$ Then,
 $
\shf{(c(s)I \oplus 1,0)}^s \leq G
$
{for all}  $s \geq 1,$ and identity \eqref{eq:convergence_j_standard_gaussian} 
holds.
\end{claim}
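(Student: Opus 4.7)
The plan is to verify the two assertions separately. Both reduce to explicit one-variable calculations given that $G(x) = e^{-|x|^2/2}$ and $c(s) = \sqrt{s}$, so that $\shf{(c(s)I\oplus 1, 0)}(x) = \sqrt{1 - |x|^2/s}$ on $\sqrt{s}\,\ball{d}$ and vanishes elsewhere. Hence $\shf{(c(s)I\oplus 1,0)}^{s}(x) = (1 - |x|^2/s)^{s/2}$ on $\sqrt{s}\,\ball{d}$.

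\textbf{Pointwise inequality.} For the first assertion, I would use the elementary inequality $1 - t \le e^{-t}$, valid for every real $t$. Applied with $t = |x|^2/s$ whenever $|x| \le \sqrt{s}$, this gives $1 - |x|^2/s \le e^{-|x|^2/s}$, and raising to the power $s/2$ yields
\[
\shf{(c(s)I\oplus 1,0)}^{s}(x) = \left(1 - \frac{|x|^2}{s}\right)^{s/2} \le e^{-|x|^2/2} = G(x)
\]
on $\sqrt{s}\,\ball{d}$; outside this ball the left-hand side is $0 \le G(x)$.

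\textbf{Integral convergence.} For the second assertion, I would use identity \eqref{eq:sfunc_of_ellipsoida} applied to the ellipsoid represented by $(c(s)I\oplus 1, 0)$. Since the $s$-volume of a \dsymm{} ellipsoid equals the integral of the $s$-th power of its height function,
\[
\int_{\R^d}\shf{(c(s)I\oplus 1,0)}^{s}\,dx \;=\; \volbs\cdot 1^s\cdot\det(c(s) I) \;=\; \volbs\cdot s^{d/2}.
\]
Now invoke \eqref{eq:volbs_limit_at_infinity}, namely $\volbs\cdot(s/2)^{d/2}\to\pi^{d/2}$ as $s\to\infty$. Multiplying by $2^{d/2}$,
\[
\lim_{s\to\infty}\int_{\R^d}\shf{(c(s)I\oplus 1,0)}^{s}\,dx \;=\; \lim_{s\to\infty}\volbs\cdot\left(\tfrac{s}{2}\right)^{d/2}\cdot 2^{d/2} \;=\; (2\pi)^{d/2} \;=\; \int_{\R^d} G,
\]
which is exactly \eqref{eq:convergence_j_standard_gaussian}.

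\textbf{Obstacle assessment.} There is essentially no obstacle here: both parts are direct consequences of a single elementary inequality ($1-t\le e^{-t}$) and of the already-computed value of $\volbs$ together with its asymptotic behaviour given in \eqref{eq:volbs_limit_at_infinity}. The only point one needs to be mildly careful about is the bookkeeping of the exponent $s/2$ versus $s$ in passing between the height function and its $s$-th power, and matching the factor $(s/2)^{d/2}$ in \eqref{eq:volbs_limit_at_infinity} with the factor $s^{d/2} = \det(c(s)I)$ appearing on the right-hand side of \eqref{eq:sfunc_of_ellipsoida}.
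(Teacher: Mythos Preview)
Your proof is correct. The integral convergence argument is identical to the paper's, which simply cites \eqref{eq:sfunc_of_ellipsoida} and \eqref{eq:volbs_limit_at_infinity}. For the pointwise inequality, you take a slightly different and more direct route: you use the elementary bound $1-t\le e^{-t}$ and raise it to the power $s/2$. The paper instead observes that $(1-|x|^2/s)^{s/2}$ is increasing in $s$ for each fixed $x$ and converges to $G(x)$ as $s\to\infty$, hence stays below $G(x)$ for all $s\ge 1$. Your argument is shorter and avoids the monotonicity check; the paper's argument has the side benefit of explicitly exhibiting the pointwise limit, but that is not needed for the claim itself.
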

\begin{proof}[Proof of Claim~\ref{claim:explicit_approx_gauss_john}]
Identity \eqref{eq:convergence_j_standard_gaussian} is an immediate consequence 
of \eqref{eq:sfunc_of_ellipsoida} and \eqref{eq:volbs_limit_at_infinity}.

 Inequality $ \shf{(c(s)I \oplus 1,0)}^s \leq G
$ is purely technical.  
By a routine calculation, for any $x \in \R^d,$ we have
\[
\lim\limits_{s \to \infty} \shf{(c(s)I \oplus 1,0)}^{s}(x) {=} 
\lim\limits_{s \to \infty} \left(1 - \frac{|x|^2}{s}\right)^{s/2}
= G(x).
\]
As is easily seen, 
$\shf{(c(s)I \oplus 1,0)}^s(x)$ is an increasing function of $s\in [1,\infty)$
for any fixed $x \in \R^d.$
\end{proof}

\Href{Lemma}{lem:john_gaussian_approx} follows from Claims 
\ref{claim:convenient_approx_gauss_john} and 
\ref{claim:explicit_approx_gauss_john}.

\begin{lemma}\label{lem:gaussian_part_limit}
If $\limsup\limits_{s \to \infty} \smeasure{\sellbody{f}} > 0,$ then there 
exists a sequence $\{s_i\}_1^{\infty}$ of positive reals with 
$\lim\limits_{i \to \infty} s_i = \infty$ such that the  
\jsfunctions{} $\selldense[s_i]{f}$ converge uniformly on $\Red$ to a Gaussian 
density, which we denote by $G[(A_\infty \oplus \alpha_\infty, a_\infty)]$, 
which is below $f$ and is of maximal integral among Gaussian densities below 
$f$.
Moreover, we have
\[
  \int_{\Red} G[(A_\infty \oplus \alpha_\infty, a_\infty)]= 
 \limsup\limits_{s \to \infty} \smeasure{\sellbody{f}},
 \text{ and } \alpha_\infty\in[e^{-d}\norm{f},\norm{f}].
\]
\end{lemma}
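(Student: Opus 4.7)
The plan is to use compactness to extract from $\{s\}$ a subsequence $\{s_i\}\to\infty$ along which all parameters of the \jsellipsoid{} $\sellbody[s_i]{f}$ converge appropriately, and then to invoke \Href{Lemma}{lem:limit_of_balls} to identify the limit of the \jsfunctions{} as a Gaussian density. I would fix $\mu=\limsup_{s\to\infty}\smeasure{\sellbody{f}}>0$ and choose $s_i\to\infty$ realizing this limsup. Writing $\sellbody[s_i]{f}$ as $(A_{s_i}\oplus\alpha_{s_i},a_{s_i})$, the bounds on these parameters come from \Href{Lemma}{lem:boundedness} applied to $\selldense[s_i]{f}$ with $g(t)=(1-t^2)^{s_i/2}_+=\shf{\ball{2}}^{s_i}(t)$ and $\delta=\mu/2$, together with \Href{Lemma}{lem:height}: they yield $\alpha_{s_i}^{s_i}\in[e^{-d}\norm{f},\norm{f}]$, $|a_{s_i}|\leq\rho$, and a two-sided spectral bound on $A_{s_i}$. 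By asymptotics of the Beta function, both sides of the spectral bound scale like $\sqrt{s_i}$ with a bounded ratio, so $A_{s_i}/\norm{A_{s_i}}$ lies in a compact set of symmetric matrices of norm $1$ with smallest eigenvalue uniformly bounded below by a positive constant. Passing to a further subsequence gives $\alpha_{s_i}^{s_i}\to\alpha_\infty\in[e^{-d}\norm{f},\norm{f}]$, $a_{s_i}\to a_\infty$, and $A_{s_i}/\norm{A_{s_i}}\to A$ for some positive definite $A$.

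Next, I would apply \Href{Lemma}{lem:limit_of_balls} to the origin-centered, height-$1$ ellipsoids $\upthing{F}_i$ represented by $(A_{s_i}\oplus 1,0)$. Since $\smeasure[s_i]{\upthing{F}_i}=\smeasure[s_i]{\sellbody[s_i]{f}}/\alpha_{s_i}^{s_i}\to\mu/\alpha_\infty>0$, the lemma produces a Gaussian $G[(A'_\infty\oplus 1,0)]$ with uniform convergence of $\shf{\upthing{F}_i}^{s_i}$ to it on $\Red$. Because $\selldense[s_i]{f}(x)=\alpha_{s_i}^{s_i}\shf{\upthing{F}_i}^{s_i}(x-a_{s_i})$ and the limit Gaussian is uniformly continuous on $\Red$, the convergences $\alpha_{s_i}^{s_i}\to\alpha_\infty$ and $a_{s_i}\to a_\infty$ upgrade this to uniform convergence of $\selldense[s_i]{f}$ on $\Red$ to $G_\infty:=G[(A'_\infty\oplus\alpha_\infty,a_\infty)]$. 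Pointwise, $\selldense[s_i]{f}\leq f$, so $G_\infty\leq f$; and dominated convergence with dominating function $f\in L^1$ gives $\int_\Red G_\infty=\lim_i\int_\Red\selldense[s_i]{f}=\lim_i\smeasure[s_i]{\sellbody[s_i]{f}}=\mu$.

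For the maximality, let $G^*$ be any Gaussian density below $f$. For every $s>0$, the chain $\selldense{G^*}\leq G^*\leq f$ implies $\sellbody{G^*}\subseteq\slift{f}$, hence $\int_\Red\selldense{G^*}=\smeasure{\sellbody{G^*}}\leq\smeasure{\sellbody{f}}$. By \Href{Lemma}{lem:john_gaussian_approx} and dominated convergence with $G^*$ as dominant, $\int_\Red\selldense{G^*}\to\int_\Red G^*$ as $s\to\infty$. Therefore $\int_\Red G^*\leq\limsup_{s\to\infty}\smeasure{\sellbody{f}}=\mu=\int_\Red G_\infty$, which shows that $G_\infty$ is of maximal integral among Gaussian densities below $f$. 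The height bounds on $\alpha_\infty$ are then immediate from \Href{Lemma}{lem:height} and the trivial bound $\alpha_s^s\leq\norm{f}$.

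I expect the main difficulty to be the compactness argument for the matrices $A_{s_i}$: because $\norm{A_{s_i}}\to\infty$, one must normalize, and one must show that the subsequential limit of $A_{s_i}/\norm{A_{s_i}}$ is \emph{positive definite} (not merely positive semidefinite) so that \Href{Lemma}{lem:limit_of_balls} produces a genuine Gaussian limit. This is precisely what the two-sided $\sqrt{s_i}$-scaling of the bound in \Href{Lemma}{lem:boundedness}, combined with Beta-function asymptotics, provides: without a matching lower bound of order $\sqrt{s_i}$ on the smallest eigenvalue of $A_{s_i}$, one could only obtain a degenerate limit whose $s$-marginals do not converge to a Gaussian density on $\Red$.
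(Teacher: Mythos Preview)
Your proposal is correct and follows essentially the same approach as the paper: compactness of the parameters via \Href{Lemma}{lem:boundedness} and \Href{Lemma}{lem:height}, positive definiteness of the normalized limit matrix via the $\sqrt{s}$-scaling of the spectral bounds (which the paper packages as \Href{Claim}{claim:norm_bound}), identification of the Gaussian limit via \Href{Lemma}{lem:limit_of_balls}, and maximality via \Href{Lemma}{lem:john_gaussian_approx}. Your treatment is in fact slightly more careful than the paper's in two places: you normalize to height $1$ and center $0$ before invoking \Href{Lemma}{lem:limit_of_balls} (which is how that lemma is stated), and you justify $\int_{\Red} G_\infty=\mu$ by dominated convergence with dominant $f$, whereas the paper appeals only to ``uniform convergence,'' which on an unbounded domain needs exactly the domination you supply.
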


In the proof of Lemma~\ref{lem:gaussian_part_limit}, we will need the following 
immediate consequence of \Href{Lemma}{lem:boundedness} and 
\eqref{eq:volbs_limit_at_infinity}.
\begin{claim}\label{claim:norm_bound}
Let $f:\Red\to[0,\infty)$ be a proper log-concave function, and $\delta, 
s_0>0$. Then there exist  $\rho_1, \rho_2 >0 $ such that 
for any $ s \geq s_0$, if 
$\upthing{E}=(A\oplus\alpha)\ball{d+1}+a$, where $(A\oplus\alpha,a)\in\ellips$, 
is a \dsymm{} ellipsoid in $\Redp$ with
$\upthing{E}\subseteq \slift{f}$ and 
$\smeasure{{\upthing{E}}}\geq\delta$, then we have
\begin{equation}
\label{eq:comparison_operator_big_s}
\rho_1  I \prec \frac{A}{\sqrt{s}} \prec \rho_2 I.
\end{equation}
\end{claim}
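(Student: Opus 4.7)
The plan is to derive the claim directly from \Href{Lemma}{lem:boundedness} by making the same choice of auxiliary function that appears in the proofs of \Href{Lemma}{lem:uniquness_s_ellips_fixed_height} and \Href{Theorem}{thm:johnunicity}: take $g(t)=\shf{\ball{2}}^s(t)=(1-t^2)^{s/2}$, the $s$-th power of the height function of the unit disk in $\R^{2}$. This $g$ is a proper even log-concave function on $\R$ with $g(0)=1$. Setting the parameter ``$\alpha$'' in \Href{Lemma}{lem:boundedness} to $\alpha^s$, the corresponding function $\tilde g(x)=\alpha^{s}(1-|A^{-1}(x-a)|^{2})^{s/2}$ coincides with $\shf{\upthing E}^{s}(x)$. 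By \eqref{eq:height_relation}, the inclusion $\upthing E\subseteq\slift f$ is equivalent to $\tilde g\leq f$, and the assumption $\smeasure{\upthing E}\geq\delta$ is literally $\int_{\Red}\tilde g\geq\delta$. Hence \eqref{eq:comparison_operator} yields
\[
\rho_{1}'\,\frac{\bigl(\int_{\R}g\bigr)^{d-1}}{\int_{\Red}g(|x|)\di x}\, I \;\prec\; A \;\prec\; \frac{\rho_{2}'}{\int_{\R}g}\, I,
\]
where $\rho_{1}',\rho_{2}'>0$ depend only on $f$ and $\delta$.

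To convert this into the desired bounds on $A/\sqrt{s}$, I would pin down the $s$-dependence of the two integrals. One has $\int_{\R}g=\betaf{1/2}{s/2+1}=\sqrt{\pi}\,\Gamma(s/2+1)/\Gamma(s/2+3/2)$ and $\int_{\Red}g(|x|)\di x=\volbs$. The Gamma-function asymptotic used to prove \eqref{eq:volbs_limit_at_infinity} gives simultaneously
\[
\lim_{s\to\infty}\volbs\cdot(s/2)^{d/2}=\pi^{d/2}\qquad\text{and}\qquad \lim_{s\to\infty}\Bigl(\int_{\R}g\Bigr)\cdot\sqrt{s/2}=\sqrt{\pi}.
\]
Both rescaled quantities are continuous and strictly positive on $[s_0,\infty)$ and converge to finite positive limits, so they are bounded above and below by positive constants on that ray. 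Equivalently, $\int_{\R}g=\Theta(s^{-1/2})$ and $\volbs=\Theta(s^{-d/2})$, uniformly for $s\geq s_0$. Plugging these estimates into the displayed inequality, both ends become a constant multiple of $\sqrt{s}\cdot I$, which yields \eqref{eq:comparison_operator_big_s} with suitable constants $\rho_1,\rho_2>0$ depending only on $f$, $\delta$, and $s_0$.

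There is no genuine obstacle; the statement is really a bookkeeping reformulation of \Href{Lemma}{lem:boundedness} that tracks how its right-hand sides behave as $s$ grows, for the specific choice $g=\shf{\ball{2}}^{s}$. The only point requiring a little care is that \Href{Lemma}{lem:boundedness} is applied with a $g$ that itself depends on $s$, so one must verify that the final constants in \eqref{eq:comparison_operator_big_s} are genuinely uniform in $s\geq s_0$; this is exactly what the two limit identities above, combined with continuity of the integrals in $s$, provide.
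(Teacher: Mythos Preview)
Your proposal is correct and is exactly the intended argument: the paper records this claim as ``an immediate consequence of \Href{Lemma}{lem:boundedness} and \eqref{eq:volbs_limit_at_infinity}'' without further detail, and what you wrote is precisely the way one fills in that remark. Your care in noting that the constants $\rho_1',\rho_2'$ from \Href{Lemma}{lem:boundedness} are independent of $g$ (hence of $s$), and that the rescaled integrals $\sqrt{s/2}\int_{\R}g$ and $(s/2)^{d/2}\volbs$ are continuous, positive, and convergent on $[s_0,\infty)$, is exactly what is needed to make the implication rigorous.
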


\begin{proof}[Proof of Lemma~\ref{lem:gaussian_part_limit}]
Let $(A_s \oplus \alpha_s, a_s)$ represent $\sellbody{f}.$
By  \Href{Lemma}{lem:height}, we have that $\norm{\selldense{f}}$ belongs to 
the interval $[e^{-d}\norm{f},\norm{f}]$. Thus, $[f\geq 
e^{-d}\norm{f}]\supseteq\{ a_s \}_{s > 0}$. Since $f$ is a proper log-concave 
function, the set $[f\geq e^{-d}\norm{f}]$ is a bounded subset of $\Red$, and 
thus, so is $\{ a_s \}_{s > 0}$.
Thus, there exists a sequence   $\{s_i\}_1^{\infty}$ with 
$\lim\limits_{i \to \infty} s_i = \infty$ such that 
\begin{equation}
\label{eq:limits_s_infty}
 \smeasure[s_i]{\sellbody[s_i]{f}} \to  \limsup\limits_{s \to \infty} 
\smeasure{\sellbody{f}}, \,
 \frac{A_{s_i}}{\norm{A_{s_i}}} \to  A , \;
 \norm{\selldense[s_i]{f}} \to \alpha_\infty > 0
\quad \text{and} \quad a_{s_i}  \to a_\infty
\end{equation}
{for some positive semidefinite matrix $A\in\Re^{d\times d}$, an 
$\alpha_\infty>0$ and $a_\infty\in\Red$,}
as $i$ tends to $\infty.$

\Href{Claim}{claim:norm_bound} implies that $A$ is positive definite.
Hence by \eqref{eq:limits_s_infty}, we may apply
\Href{Lemma}{lem:limit_of_balls}
to obtain that the sequence $\left\{\selldense[s_i]{f}\right\}_{i=1}^{\infty}$ 
converges uniformly to  the Gaussian density
$G[(A_\infty \oplus \alpha_\infty, a_\infty)]$, where
\[
A_\infty = \frac{1}{\sqrt{\pi}} \left(\frac{ \limsup\limits_{s \to \infty} 
\smeasure{\sellbody{f}}}{ \det A }\right)^{1/d}
{A}.
\] 
Clearly, $G[(A_\infty \oplus \alpha_\infty, a_\infty)] \leq f$ and, by the 
uniform convergence,
\[
\lim_{i\rightarrow\infty}\smeasure[s_i]{\sellbody[s_i]{f}} =
\limsup_{s\rightarrow\infty}\smeasure[s]{\sellbody[s]{f}} =
\int_{\Red} G[(A_\infty \oplus \alpha_\infty, a_\infty)].
\]
The latter implies that there is no Gaussian density below  $f$ with the 
integral strictly greater than $\int_{\Red} G[(A_\infty \oplus \alpha_\infty, 
a_\infty)],$ since, by  \Href{Lemma}{lem:john_gaussian_approx}, any 
Gaussian density $G^{\prime}$ is 
the limit of $\selldense{G^{\prime}}$ as $s \to \infty.$ 
\end{proof}

\begin{proof}[Proof of \Href{Theorem}{thm:s_infty_approx}]
First, assume that there is a Gaussian density $G$ below $f$. Then, by 
\Href{Lemma}{lem:john_gaussian_approx}, 
\[
\limsup\limits_{s \to \infty} \smeasure{\sellbody{f}}= \limsup\limits_{s \to 
\infty} \int_{\Red} \selldense{f}\geq\limsup\limits_{s \to \infty} \int_{\Red} 
\selldense{G}= 
\int_{\Red} G> 0.
\]
The converse in part \eqref{item:gaussianexists} follows from 
\Href{Lemma}{lem:gaussian_part_limit}.

To prove part \eqref{item:gaussianislimit}, assume again that there is a 
Gaussian density below $f$. 
By part \eqref{item:gaussianexists}, we may apply 
\Href{Lemma}{lem:gaussian_part_limit} and obtain a Gaussian density $G[(A_\infty 
\oplus \alpha_\infty, a_\infty)]$ with all the desired properties. 
We need to verify only that the limit in part \eqref{item:gaussianislimit} 
exists and is equal to the $\limsup$.
We have by \Href{Lemma}{lem:john_gaussian_approx} that
\[
 \smeasure{\sellbody{f}} \geq  
 \smeasure{\sellbody{G[(A_\infty \oplus \alpha_\infty, a_\infty)]}} 
\xrightarrow{s \to \infty}
 \int_{\Red} G[(A_\infty \oplus \alpha_\infty, a_\infty)]= 
 \limsup\limits_{s \to \infty} \smeasure{\sellbody{f}}, 
 \]
and hence, the limit $\lim\limits_{s \to \infty} \smeasure{\sellbody{f}}$ exists 
completing the proof of part \eqref{item:gaussianislimit}.

Part \eqref{item:gaussianheightbound} follows immediately from 
\Href{Lemma}{lem:gaussian_part_limit} and 
\Href{Theorem}{thm:infinity_ellipsoid}.
\end{proof}

\section{The Helly type result --- Proof of 
Theorem~\ref{thm:BKP}}\label{sec:BKP}

In this section, we prove \Href{Theorem}{thm:BKP}.

\subsection{Assumption: the functions are supported on 
\texorpdfstring{$\Red$}{R\string^d}}\label{subsec:assumptionsupport}
We claim that we may assume that 
the support of each $f_i$ is $\Red$. 
Indeed, any log-concave function can be approximated in the $L_1$-norm by 
log-concave functions whose support is $\Red$. 
Recall that $f_\sigma$ denotes the pointwise minimum of functions
$\{f_i\}_{i \in \sigma}$ for $\sigma\subseteq[n]$.
We may approximate each function 
so that the $f_\sigma$ are also all well approximated. One way to achieve this 
is to 
take the Asplund sum $\loginfconv{f_i}{( e^{-\delta |x|^2})}$ for a 
sufficiently large $\delta>0$ (see \Href{Section}{sec:asplund}).
\subsection{Assumption: John position}\label{subsec:assumptionjohn}
Consider the $s$-lifting of our functions with $s=1$. Clearly, the 
$s$-lifting of a pointwise minimum of a family of functions is the intersection 
of the $s$-liftings of the functions.

From our assumption in  \Href{Subsection}{subsec:assumptionsupport}, it follows 
that 
$\int_{\Red} 
f>0$. By applying a linear transformation on $\Red$, we may assume 
that, with $s=1$, the largest $s$-volume ellipsoid in the $s$-lifting 
$\slift[1]{f}$ of $f$ is $\ball{d+1}\subset\slift[1]{f}.$

By \Href{Theorem}{thm:johncond}, there are contact points 
$\upthing{u}_1,\ldots,\upthing{u}_k\in\bd{\ball{d+1}}\cap\bd{\slift[1]{f}}$, 
and positive weights $c_1,\ldots,c_k$ satisfying \eqref{eq:johncond} with $s=1$.
For each $j\in[k]$, we denote by $u_j$ the orthogonal projection of the contact 
point $\upthing{u}_j$ onto $\Red$ and by $w_j=\sqrt{1-|u_j|^2}$.

\newcommand{\contactindices}{\eta}
\subsection{Reduction of the problem to finding \texorpdfstring{$P$ and 
$\contactindices$}{P and eta}}\label{subsec:reduction}

\begin{claim}\label{claim:needpandeta}
With the assumptions in Subsections~\ref{subsec:assumptionsupport} and 
\ref{subsec:assumptionjohn}, we can find a set of indices 
$\contactindices\in\binom{[k]}{\leq 2d+1}$ and an origin-symmetric convex body 
$P$ in $\Red$ with the following two properties.
\begin{equation}\label{eq:volumeboundforP}
 \vol{d}{P}\leq 8 \cdot 4^d \cdot d^{d} (d+2)^d  \left(\vol{d} \ball{d} \right)^2
\end{equation}
and
\begin{equation}\label{eq:iprodboundforP}
\norm{x}_{P}\leq
 \max\big\{\iprod{x}{u_j}\st j\in \contactindices\big\}\;\;\text{ for 
every } x\in\Red,
\end{equation}
where $\norm{\cdot}_{P}$ is the gauge function of $P$, that is, 
$\norm{x}_{P}=\inf\{\lambda>0\st x\in\lambda P\}$.
\end{claim}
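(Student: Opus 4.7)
The natural candidate is $P := (Q_\contactindices \cap (-Q_\contactindices))\pol$ where $Q_\contactindices := \conv{\{u_j : j\in\contactindices\}}$, since $P$ is then automatically origin-symmetric, and \eqref{eq:iprodboundforP} follows at once from polar duality: provided $0$ lies in the interior of $Q_\contactindices$ (so that $P$ is a genuine convex body),
\[
  \norm{x}_P \;=\; h_{P\pol}(x) \;=\; h_{Q_\contactindices \cap (-Q_\contactindices)}(x) \;\leq\; h_{Q_\contactindices}(x) \;=\; \max_{j\in\contactindices}\iprod{x}{u_j}.
\]
For the volume bound, I would apply the Blaschke--Santal\'o inequality to the origin-symmetric body $P\pol = Q_\contactindices \cap (-Q_\contactindices)$, which gives $\vol{d}(P) \leq (\vol{d} \ball{d})^2 / \vol{d}(P\pol)$. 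Thus the whole claim reduces to exhibiting an $\contactindices$ of size at most $2d+1$ with $0 \in \inter{Q_\contactindices}$ and
\[
  \vol{d}\bigl(Q_\contactindices \cap (-Q_\contactindices)\bigr) \;\geq\; \bigl(8 \cdot 4^d \cdot d^d (d+2)^d\bigr)^{-1}.
\]
Notice that $f$ plays no role at this stage; only the John data for $\ball{d+1}$ is used.

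To select $\contactindices$, I would exploit the projected John decomposition from \Href{Subsection}{subsec:assumptionjohn}, namely $\sum_{i=1}^{k} c_i u_i \otimes u_i = I$, $\sum_{i=1}^{k} c_i u_i = 0$ and $\sum_{i=1}^{k} c_i = d+1$, with every $u_i$ lying in $\ball{d}$. After normalization, the centering identity writes $0$ as a convex combination of the $u_i$, so Carath\'eodory delivers $d+1$ indices whose $u_j$'s contain $0$ in the relative interior of their convex hull. This alone gives no volumetric control. The isotropic identity is what lets us quantify: by a Dvoretzky--Rogers style extraction I can pick $d$ indices for which the corresponding $u_{j_i}$ are linearly independent with each $|u_{j_i}|$ bounded below by an explicit function of $d$. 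Merging the two selections and, if necessary, adding a few more indices from the John decomposition to re-center $0$ safely inside the resulting convex hull, uses at most $2d+1$ contact points.

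The main obstacle is turning the loose construction above into the sharp quantitative bound on $\vol{d}(Q_\contactindices \cap (-Q_\contactindices))$. I expect the argument to route through a simplex lemma: for a simplex $T$ with centroid at the origin, $\vol{d}(T\cap(-T)) \geq 2^{-d}\vol{d}(T)$, together with a lower bound on $\vol{d}(T)$ that follows from the isotropic decomposition (the identity $\sum c_i = d+1$, combined with $\sum c_i u_i \otimes u_i = I$, quantitatively forces the $u_i$ to span out from the origin). The specific constant $(d+2)^d$ in the target bound suggests that the selection will produce a simplex whose centroid is close to, but not exactly at, the origin, with the deviation absorbed by a single extra factor of order $d$; the factor $d^d$ comes from the volume of the standard simplex, and $8$ and $4^d$ from tracking the constants in Blaschke--Santal\'o and the symmetrization $T\cap(-T)$. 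Once this volume estimate is in place, \eqref{eq:volumeboundforP} drops out of Blaschke--Santal\'o.
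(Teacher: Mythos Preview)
Your overall strategy---select contact points via Dvoretzky--Rogers, build an origin-symmetric body from their convex hull, and bound the volume of its polar through Blaschke--Santal\'o---is exactly the paper's. However, two of your steps have genuine gaps that the paper resolves differently.

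First, you propose applying Dvoretzky--Rogers directly to the projected vectors $u_i\in\ball{d}$. But Dvoretzky--Rogers in its standard form (and the version stated in the paper as \Href{Lemma}{lem:dvoro}) requires \emph{unit} vectors; the $u_i$ are not unit, and some may even be zero (when $\upthing{u}_i=\pm e_{d+1}$). Normalizing does not help, because the resulting simplex of the $u_{j_i}$ can then be arbitrarily small. The paper avoids this by applying Dvoretzky--Rogers \emph{upstairs} in $\Redp$ to the unit contact vectors $\upthing{u}_1,\ldots,\upthing{u}_k$, obtaining $d+1$ indices $\contactindices_1$ and a simplex $\upthing{\Delta}=\conv{\{\upthing{u}_j:j\in\contactindices_1\}\cup\{0\}}$ with a quantitative volume lower bound from \eqref{eq:dvorodet}. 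The centroid-symmetrization $\upthing{P}_1=\upthing{\Delta}\cap(2\upthing{z}-\upthing{\Delta})$ is taken in $\Redp$, losing only a factor $2^{-(d+1)}$, and only then is everything projected to $\Red$; since $\upthing{P}_1\subset P_1\times[-1,1]$, the projection $P_1$ inherits the volume bound up to a factor of $2$.

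Second, your recentering mechanism is the step where your outline is vaguest, and here the paper's argument is quite different from what you sketch. The projected body $P_1$ is symmetric about $z$ (the projection of the centroid $\upthing{z}$), not about the origin, and there is no reason for $z$ to be close to $0$. Instead of intersecting with $-P_1$, the paper takes the ray from $0$ in direction $-z$, finds its intersection $y$ with the boundary of $Q$ (the projection of the full contact polytope, which contains $\frac{1}{d+1}\ball{d}$), and applies the homothety with center $y$ and ratio $\lambda=|y|/|y-z|\geq 1/(d+2)$ to $P_1$. The image $P_2$ is now origin-symmetric, has volume at least $\lambda^d\vol{d}(P_1)$, and since $y\in\bd{Q}$, Carath\'eodory supplies at most $d$ further indices $\contactindices_2$ with $y\in\conv{\{u_j:j\in\contactindices_2\}}$. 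Then $P_2\subseteq\conv{\{u_j:j\in\contactindices_1\cup\contactindices_2\}}$, which is precisely \eqref{eq:iprodboundforP} for $P=P_2^\circ$. The factor $(d+2)^d$ in \eqref{eq:volumeboundforP} comes from this homothety, not from an off-center simplex estimate as you speculated.
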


We will prove \Href{Claim}{claim:needpandeta} in 
\Href{Subsection}{subsec:findingpandeta}.

\emph{
In the present subsection, we show that \Href{Claim}{claim:needpandeta} yields 
the existence of the desired index set $\sigma\in\binom{[n]}{\leq \hellyno}$ 
that satisfies \eqref{eq:BKPgoal}.
}

The \emph{polar} of a set $K$ in $\Re^n$ is defined by
$K^\circ =\left\{ p \in \R^n \st \iprod{y}{p} \leq 1
\text{ for all } y \in K \right\}.$
Set $T=\{u_j\st j\in \contactindices\}$. 
It is easy to see that \eqref{eq:iprodboundforP} is equivalent to
\begin{equation}\label{eq:iprodboundforPcontainment}
 T^{\circ}\subseteq P.
\end{equation}
Notice that
\begin{equation}\label{eq:outsidepositive}
\text{ for any }x\in\Red\setminus T^{\circ}, \text{ there is } j\in 
\contactindices 
\text{ such that } \iprod{u_j}{x-u_j}\geq 0.
\end{equation}

We will split the integral in \eqref{eq:BKPgoal} into two parts: the integral 
on $\Red\setminus T^\circ$ and the integral on $T^\circ$.

First, we find a set $\sigma_1$ of indices in $[n]$ that will help us bound the 
integral in \eqref{eq:BKPgoal} on $\Red\setminus T^\circ$.

Fix a $j\in \contactindices$. Since $\upthing{u}_j\in\bd{\slift[1]{f}}$, there 
is an 
index $i(j)\in[n]$ such that $\upthing{u}_j\in\bd{\slift[1]{f}_{i(j)}}$. Let 
$\sigma_1$ be the set of these indices, that is, $\sigma_1=\{i(j)\st 
j\in \contactindices\}$.

By \eqref{eq:touchingballbound}, for each $j\in \contactindices$, we have
\begin{equation}\label{eq:touchingballboundBKP}
 f_{i(j)}(x)\leq 
 w_j e^{- \frac{1}{w_j^2}\iprod{u_j}{x-u_j}}\leq
 e^{- \frac{1}{w_j^2}\iprod{u_j}{x-u_j}}
\end{equation}
 for all $x\in\Red.$
 
Next, we find a set $\sigma_2$ of indices in $[n]$ that will help us bound the 
integral in 
\eqref{eq:BKPgoal} on $T^\circ$.

It is easy to see that there is a $\sigma_2\in\binom{[n]}{\leq d+1}$ such that 
$\norm{f}=\norm{f_{\sigma_2}}$. Indeed,  for any
$i\in[n]$, consider the following convex set in $\Red$:
$[f_i>\norm{f}]$. By the definition of $f$, the 
intersection of these $n$ convex sets in $\Red$ is empty. Helly's theorem 
yields the existence of $\sigma_2$.

We combine the two index sets: let $\sigma=\sigma_1\cup \sigma_2$. Clearly, 
$\sigma$ is of size at most $\hellyno$. We need to show that $\sigma$ 
satisfies \eqref{eq:BKPgoal}. 

Note that by \eqref{eq:height} and Assumption~\ref{subsec:assumptionjohn}, we 
have $\norm{f_{\sigma_2}}=\norm{f}\leq e^d$. Hence,

\begin{equation*}
\int_{\Red} f_\sigma \leq 
\int_{T^{\circ}} \norm{f_\sigma} +
\int_{\Red\setminus T^{\circ}} f_\sigma\leq
\int_{T^{\circ}} e^d +
\int_{\Red\setminus T^{\circ}} 
f_\sigma
\stackrel{\eqref{eq:iprodboundforPcontainment}}{\leq}
 e^d\vol{d} P +
\int_{\Red\setminus T^{\circ}} 
f_{\sigma_1}
\end{equation*}
Next, we bound the second summand using the tail bound 
\eqref{eq:touchingballboundBKP}.
\[
\int_{\Red\setminus T^{\circ}} 
f_{\sigma_1}
\stackrel{\eqref{eq:touchingballboundBKP}}{\leq}
 \int_{\Red\setminus T^{\circ}} 
  \exp\left(-\max\left\{  \frac{1}{w_j^2}\iprod{u_j}{x-u_j}
   \st j\in 
\contactindices\right\}\right)\stackrel{\eqref{eq:outsidepositive}}{\leq}
\]\[
 \int_{\Red\setminus T^{\circ}} 
  \exp\left(-\max\left\{\iprod{u_j}{x-u_j}
   \st j\in \contactindices\right\}\right)\leq
 e\int_{\Red\setminus T^{\circ}} 
  \exp\left(-\max\left\{\iprod{u_j}{x}
   \st j\in 
\contactindices\right\}\right).
\]
By property \eqref{eq:iprodboundforP}, the latter is at most
\[
 e\int_{\Red\setminus T^{\circ}} 
  \exp\left(-\norm{x}_P\right)\leq
 e\int_{\Red} 
  \exp\left(-\norm{x}_P\right)=
e\cdot d! \vol{d} P.
\]
Hence,
\[
\int_{\Red} f_\sigma \leq  \left(e^d + e \cdot d!\right) \vol{d} P \leq 10 
\cdot d^{d-1} \vol{d} P.
\]
Using, the fact that 
\[\vol{d} \ball{d} \leq  d\vol{d+1} \ball{d+1} \leq d\int_{\Red} f,\] and  inequality
\eqref{eq:volumeboundforP} here, we 
obtain 
\[
\int_{\Red} f_\sigma \leq  
80 \cdot 4^d  \cdot d^{2d} (d+2)^d   \vol{d} \ball{d}  \cdot \int_{\Red} f.
\]
This inequality  directly implies  inequality \eqref{eq:BKPgoal}  in the case $d=1$. 
Consider $d \geq 2.$ Then, since  $d+2 \leq 2d$ and $\vol{d} \ball{d} \leq 10^d d^{-d/2},$ we conclude that 
\[
\int_{\Red} f_\sigma \leq  
80 \cdot 80^d  \cdot d^{5d/2}  \int_{\Red} f \leq  \left(100 d\right)^{5d/2}  \int_{\Red} f,
\]
completing the proof of inequality \eqref{eq:BKPgoal}.
\subsection{The Dvoretzky-Rogers lemma}
One key tool in proving \Href{Claim}{claim:needpandeta} is the Dvoretzky--Rogers 
lemma \cite{dvoretzky1950absolute}.

\begin{lemma}[Dvoretzky-Rogers lemma]\label{lem:dvoro}
 Assume that the points 
$\upthing{u}_1,\ldots,\upthing{u}_k\in\bd{\ball{d+1}}$, 
satisfy \eqref{eq:johncond} for $s=1$ with some positive weights 
$c_1,\ldots,c_k$.
Then there is a
sequence $j_1,\ldots,j_{d+1}$ of $d+1$ distinct indices in $[k]$ such that 
\begin{equation*}\label{eq:dvoro}  
\mathrm{dist}\big(\upthing{u}_{j_t},\mathrm{span}\{\upthing{u}_{j_1},\ldots,
\upthing{u}_{j_{t-1}}\}\big)\geq
  \sqrt{\frac{d-t+2}{d {+1}}}
  \;\;\mbox{ for all }t=2,\ldots,d+1,\;\;
\end{equation*}
where $\mathrm{dist}$ denotes the shortest Euclidean distance between a vector 
and a subspace.
\end{lemma}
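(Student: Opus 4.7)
The plan is to prove the lemma by a greedy construction that is standard in the Dvoretzky--Rogers literature, adapted to the present setting where the ambient dimension is $d+1$ rather than $d$.

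First I would unpack the hypothesis. Since $s=1$, the matrix $\upthing{S}=I\oplus 1$ is just the $(d+1)\times(d+1)$ identity $\upthing{I}$, so \eqref{eq:johncond} reads $\sum_i c_i\, \upthing{u}_i\otimes\upthing{u}_i=\upthing{I}$. Taking the trace and using $|\upthing{u}_i|=1$ (because $\upthing{u}_i\in\bd{\ball{d+1}}$) gives $\sum_i c_i=d+1$, a fact I will use repeatedly.

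Next I would construct $j_1,\ldots,j_{d+1}$ greedily. Pick $j_1$ arbitrarily. Having selected distinct indices $j_1,\dots,j_{t-1}$, let $V_{t-1}=\mathrm{span}\{\upthing{u}_{j_1},\dots,\upthing{u}_{j_{t-1}}\}$ and let $P_{t-1}$ denote the orthogonal projection of $\R^{d+1}$ onto $V_{t-1}^{\perp}$. Note $\dim V_{t-1}^{\perp}\ge d+2-t$, so $\tr{P_{t-1}}\ge d+2-t$. Conjugating the John identity by $P_{t-1}$ and taking the trace yields
\[
d+2-t \;\le\; \tr{P_{t-1}} \;=\; \tr{P_{t-1}\upthing{I}P_{t-1}} \;=\; \sum_i c_i\,\bigl|P_{t-1}\upthing{u}_i\bigr|^2 .
\]
Because $|P_{t-1}\upthing{u}_i|\le |\upthing{u}_i|=1$ and $\sum_i c_i=d+1$, this forces
\[
\max_i \bigl|P_{t-1}\upthing{u}_i\bigr|^2 \;\ge\; \frac{d+2-t}{d+1}>0.
\]
A maximizing index $i$ cannot be one of $j_1,\dots,j_{t-1}$, because for those $P_{t-1}\upthing{u}_{j_s}=0$; hence we can define $j_t$ to be a maximizer, distinct from the earlier indices.

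Finally, since $\mathrm{dist}(\upthing{u}_{j_t},V_{t-1})=|P_{t-1}\upthing{u}_{j_t}|$, the selected $j_t$ satisfies the required inequality
\[
\mathrm{dist}\bigl(\upthing{u}_{j_t},V_{t-1}\bigr) \;\ge\; \sqrt{\frac{d-t+2}{d+1}}
\]
for every $t=2,\dots,d+1$. There is no real obstacle in the argument; the only point that needs care is verifying that the greedy step produces a \emph{new} index, which follows immediately from the strict positivity of the lower bound on the maximum. The quantitative bound comes for free from combining $\tr{P_{t-1}}\ge d+2-t$ with $\sum c_i=d+1$, both of which rest only on the $s=1$ specialization of the John condition.
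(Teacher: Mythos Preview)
Your proof is correct and is the standard greedy Dvoretzky--Rogers argument. Note that the paper itself does not supply a proof of this lemma; it simply cites \cite{dvoretzky1950absolute} and uses the statement as a black box, so there is no ``paper's own proof'' to compare against. Your argument is exactly the classical one specialized to ambient dimension $d+1$, and every step is justified: the trace computation $\sum_i c_i=d+1$ follows from $|\upthing{u}_i|=1$ and $\upthing{S}=\upthing{I}$ when $s=1$; the inequality $\tr{P_{t-1}}\ge d+2-t$ is immediate from $\dim V_{t-1}\le t-1$; and the distinctness of the new index $j_t$ follows from the strict positivity of the lower bound for $t\le d+1$.
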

It follows immediately that the determinant of the $(d+1)\times(d+1)$ matrix 
with columns $\upthing{u}_{j_1},\ldots\upthing{u}_{j_{d+1}}$ is at least 
\begin{equation}\label{eq:dvorodet}
 \left|\det \left[\upthing{u}_{j_1},\ldots\upthing{u}_{j_{d+1}}\right]\right|
 \geq 
 \frac{\sqrt{(d+1)!}}{(d+1)^{(d+1)/2}}.
\end{equation}

\subsection{Finding \texorpdfstring{$P$ and $\contactindices$}{P and 
eta}}\label{subsec:findingpandeta}
In this subsection, we prove \Href{Claim}{claim:needpandeta}, that is, we
show that with the assumptions in 
Subsections~\ref{subsec:assumptionsupport} and \ref{subsec:assumptionjohn}, 
there is an origin symmetric convex body $P$ and a set of indices 
$\contactindices\in\binom{[k]}{\leq 2d+1}$ satisfying 
\eqref{eq:volumeboundforP} and \eqref{eq:iprodboundforP}. Once it is shown, by 
\Href{Subsection}{subsec:reduction}, the proof of \Href{Theorem}{thm:BKP} is 
complete.

The proof in this section follows very closely the proof of the main result in 
\cite{Nas16} as refined by Brazitikos in \cite{Bra17}.

Let $\contactindices_1\in\binom{[k]}{d+1}$ be the set of $d+1$ indices in $[k]$ 
given by \Href{Lemma}{lem:dvoro}, and let $\upthing{\Delta}$ be the simplex 
$\upthing{\Delta}=\conv{\{\upthing{u}_j\st j\in 
\contactindices_1\}\cup\{0\}}$ in $\Redp$.
Let $\upthing{z}=\frac{\sum_{j\in \contactindices_1}\upthing{u}_j}{d+1}$ denote 
the centroid of $\upthing{\Delta}$, and $\upthing{P}_1$ denote the intersection 
of 
$\upthing{\Delta}$ and its reflection about $\upthing{z}$, that is, 
$\upthing{P}_1=\upthing{\Delta}\cap(2\upthing{z}-\upthing{\Delta})$, a 
polytope which is centrally symmetric about $\upthing{z}$. It is well known 
\cite[Corollary 3]{milman2000entropy} (see also \cite[Section 
4.3.5]{aubrun2017alice}),
that $\vol{d+1}{\upthing{P}_1}\geq 2^{-(d+1)}\vol{d+1}{\upthing{\Delta}}$, 
and hence, by \eqref{eq:dvorodet}, we have 
$$
 \vol{d+1}{\upthing{P}_1}\geq 2^{-(d+1)}
 \frac{\left|\det[\upthing{u}_j\st j\in \contactindices_1]\right|}{(d+1)!}
 \geq  \frac{1}{2^{d+1}\sqrt{(d+1)!}(d+1)^{(d+1)/2}}
$$

Let $P_1$ denote the orthogonal projection of $\upthing{P}_1$ onto $\Red$. 
Since 
$\upthing{P}_1\subset P_1\times[-1,1]$, we have that
\begin{equation}\label{eq:poneisbig}
 \vol{d} P_1
 \geq  \frac{1}{2^{d+2}\sqrt{(d+1)!}(d+1)^{(d+1)/2}}.
\end{equation}

Moreover, $P_1$ is symmetric about the orthogonal projection $z$ of 
$\upthing{z}$ onto $\Red$.

Let $\upthing{Q}$ denote the convex hull of the contact points,
$\upthing{Q}={\conv{\bd{\slift{f}}\cap\bd{\ball{d+1}}}}$, and $Q$ denote the 
orthogonal projection of $\upthing{Q}$ onto $\Red$.
As a well known consequence of \eqref{eq:johncond} for $s=1$ 
\cite{ball1997elementary}, we have
$\frac{1}{d+1}\ball{d+1}\subset \upthing{Q}$, and hence, 
$\frac{1}{d+1}\ball{d}\subset Q$.

Let $\ell$ be the ray in $\Red$ emanating from the origin in the 
direction of the vector $-z$, and let $y$ be the 
point of intersection of $\ell$ with the boundary (in $\Red$) of $Q$, that is, 
$\{y\}=\ell\cap\bd{Q}$. Now, 
$\frac{1}{d+1}\ball{d}\subset Q$ yields that $|y|\geq 1/(d+1)$.

We apply a contraction with center $y$ and ratio 
$\lambda=\frac{|y|}{|y-z|}$ on 
$P_1$ to obtain the polytope $P_2$. Clearly, 
$P_2$ is a convex polytope in $\Red$ which is symmetric about the origin. 
Furthermore,
\begin{equation}\label{eq:lambdanotsmall}
 \lambda=\frac{|y|}{|y-z|}\geq 
 \frac{|y|}{1+|y|}\geq\frac{1}{d+2}.
\end{equation}

Let $P$ be the polar $P=P_2^{\circ}$ of $P_2$ taken in $\Red$.
By the Santal\'o inequality \cite[Theorem 9.5]{GruberBook}, we obtain
\begin{equation*}
 \vol{d} P \leq
 \frac{\left(\vol{d}\ball{d}\right)^2}{\vol{d} P_2 }=
 \frac{\left(\vol{d}\ball{d}\right)^2}{\lambda^d\vol{d} P_1 },
\end{equation*}
which, by \eqref{eq:poneisbig}, the inequality $d+1 \leq 2d$ and \eqref{eq:lambdanotsmall}, yields 
that $P$ satisfies \eqref{eq:volumeboundforP}.

To complete the proof, we need to find $\contactindices\in\binom{[k]}{\leq 
2d+1}$ such that $P$ and $\contactindices$ satisfy 
\eqref{eq:iprodboundforP}.

Since $y$ is on $\bd{Q}$, by Carath\'eodory's theorem, 
$y$ is in the convex hull 
of some subset of at most $d$ vertices of $Q$. Let this subset be 
$\{u_j\st j\in \contactindices_2\}$, where 
$\contactindices_2\in\binom{[k]}{\leq d}$.

We set $\contactindices=\contactindices_1\cup \contactindices_2$, and
claim that $P$ and $\contactindices$ satisfy \eqref{eq:iprodboundforP}.

Indeed, since 
$
 P_2\subseteq 
 \conv{\{u_j\st j\in\contactindices_1\}\cup\{y\}}
$ and
$
 y\in \conv{\{u_j\st j\in\contactindices_2\}}
$, we have 
\begin{equation*}
 P_2\subseteq 
 \conv{\{u_j\st j\in\contactindices\}}.
\end{equation*}

Taking the polar of both sides in $\Red$, we obtain 
$
P\supseteq \{u_j\st j\in\contactindices\}^{\circ},
$ which is equivalent to \eqref{eq:iprodboundforP}.

Thus, $P$ and $\contactindices$ satisfy \eqref{eq:volumeboundforP} and 
\eqref{eq:iprodboundforP}, and hence, the proof of \Href{Theorem}{thm:BKP} is 
complete.

\subsection{Lower bound on the Helly number}
\label{sec:BKPlowerbound}

The number of functions selected in \Href{Theorem}{thm:BKP} is $3d+2$. In this 
subsection, we show that it cannot be decreased to $2d$.
In fact, 
for any dimension $d$ and any $\Delta > 0$, we give an example of $2d+1$ 
log-concave 
functions $f_1,\ldots,f_{2d+1}$ such that $\int f_{[n]}=2^d$, but for any 
$I\in\binom{[2d+1]}{\leq2d}$, the integral is 
$\int f_{I}  > \Delta$.
Our example is a simple extension of the standard one (the $2d$ supporting 
half-spaces of a cube) for convex sets. 

Set
\[
 \phi(t)=\begin{cases}
           0 ,&\text{ if } t <  0\\
          e^{\Delta},&\text{ otherwise}.
         \end{cases}
\]
Clearly, $\phi$ is upper semi-continuous.
Let $e_1,\ldots,e_{d}$ 
denote the standard basis in $\Red$, and for each $i\in[d]$, define the 
functions
$f_i(x)=\phi(\iprod{e_i}{x+e_i})$ and 
$f_{d+i} = \phi(-\iprod{e_i}{x-e_i})$, and let $f_{2d+1}=1$.
These functions are proper log-concave functions. The bounds on the integrals 
are easy.

\subsection*{Acknowledgement}
G.I. was supported  by the Ministry of Education and Science of the Russian 
Federation in the framework of MegaGrant no 075-15-2019-1926.
M.N. was supported by the National Research, Development and Innovation Fund (NRDI) grants K119670 and KKP-133864 as
well as the Bolyai Scholarship of the Hungarian Academy of Sciences and the New National
Excellence Programme and the TKP2020-NKA-06 program provided by the NRDI.

\bibliographystyle{alpha}
\bibliography{biblio}

\begin{thebibliography}{DLGMM19}

\bibitem[AAKM04]{AKM04}
S.~Artstein-Avidan, B.~Klartag, and V.~Milman.
\newblock The {S}antal\'o point of a function, and a functional form of the
  {S}antal\'o inequality.
\newblock {\em Mathematika}, 51(1-2):33--48, 2004.

\bibitem[AGMJV18]{alonso2018john}
David Alonso-Guti{\'e}rrez, Bernardo~Gonz{\'a}lez Merino, C.~Hugo Jim{\'e}nez,
  and Rafael Villa.
\newblock John’s ellipsoid and the integral ratio of a log-concave function.
\newblock {\em The Journal of Geometric Analysis}, 28(2):1182--1201, 2018.

\bibitem[AS48]{abramowitz1948handbook}
Milton Abramowitz and Irene~A. Stegun.
\newblock {\em Handbook of mathematical functions with formulas, graphs, and
  mathematical tables}, volume~55.
\newblock US Government printing office, 1948.

\bibitem[AS17]{aubrun2017alice}
Guillaume Aubrun and Stanis{\l}aw~J. Szarek.
\newblock {\em Alice and Bob Meet Banach: The Interface of Asymptotic Geometric
  Analysis and Quantum Information Theory}, volume 223.
\newblock American Mathematical Soc., 2017.

\bibitem[Bal97]{ball1997elementary}
Keith Ball.
\newblock An elementary introduction to modern convex geometry.
\newblock {\em Flavors of geometry}, 31:1--58, 1997.

\bibitem[BGVV14]{brazitikos2014geometry}
Silouanos Brazitikos, Apostolos Giannopoulos, Petros Valettas, and
  Beatrice-Helen Vritsiou.
\newblock {\em Geometry of isotropic convex bodies}, volume 196.
\newblock American Mathematical Society Providence, 2014.

\bibitem[BKP82]{BKP82}
Imre B\'ar\'any, Meir Katchalski, and J\'anos Pach.
\newblock Quantitative {H}elly-type theorems.
\newblock {\em Proc. Amer. Math. Soc.}, 86(1):109--114, 1982.

\bibitem[BR02]{BR02}
Jes\'{u}s Bastero and Miguel Romance.
\newblock John's decomposition of the identity in the non-convex case.
\newblock {\em Positivity}, 6(1):1--16, 2002.

\bibitem[Bra17]{Bra17}
Silouanos Brazitikos.
\newblock Brascamp-{L}ieb inequality and quantitative versions of {H}elly's
  theorem.
\newblock {\em Mathematika}, 63(1):272--291, 2017.

\bibitem[BZ88]{BZ88}
Yu.~D. Burago and V.~A. Zalgaller.
\newblock {\em Geometric inequalities}, volume 285 of {\em Grundlehren der
  Mathematischen Wissenschaften [Fundamental Principles of Mathematical
  Sciences]}.
\newblock Springer-Verlag, Berlin, 1988.
\newblock Translated from the Russian by A. B. Sosinski\u{\i}, Springer Series
  in Soviet Mathematics.

\bibitem[Cla90]{clarke1990optimization}
Frank~H. Clarke.
\newblock {\em Optimization and nonsmooth analysis}.
\newblock SIAM, 1990.

\bibitem[DLGMM19]{DGFM19survey}
Jes\'{u}s~A. De~Loera, Xavier Goaoc, Fr\'{e}d\'{e}ric Meunier, and Nabil~H.
  Mustafa.
\newblock The discrete yet ubiquitous theorems of {C}arath\'{e}odory, {H}elly,
  {S}perner, {T}ucker, and {T}verberg.
\newblock {\em Bull. Amer. Math. Soc. (N.S.)}, 56(3):415--511, 2019.

\bibitem[DR50]{dvoretzky1950absolute}
Aryeh Dvoretzky and Claude~A. Rogers.
\newblock Absolute and unconditional convergence in normed linear spaces.
\newblock {\em Proceedings of the National Academy of Sciences of the United
  States of America}, 36(3):192, 1950.

\bibitem[GLMP04]{GLMP04}
Y.~Gordon, A.E. Litvak, M.~Meyer, and A.~Pajor.
\newblock John's decomposition in the general case and applications.
\newblock {\em J. Differential Geom.}, 68(1):99--119, 09 2004.

\bibitem[GPT01]{GPT01}
A.~Giannopoulos, I.~Perissinaki, and A.~Tsolomitis.
\newblock John's theorem for an arbitrary pair of convex bodies.
\newblock {\em Geom. Dedicata}, 84(1-3):63--79, 2001.

\bibitem[Gru07]{GruberBook}
Peter~M. Gruber.
\newblock {\em Convex and Discrete Geometry}.
\newblock Springer Berlin Heidelberg, 2007.

\bibitem[HW18]{HW18survey}
Andreas Holmsen and Rephael Wenger.
\newblock Helly-type theorems and geometric transversals.
\newblock In Jacob~E. Goodman, Joseph O'Rourke, and Csaba~D. T\'{o}th, editors,
  {\em Handbook of discrete and computational geometry}, pages 91--123. CRC
  Press, Boca Raton, FL, 2018.
\newblock Third edition.

\bibitem[Joh48]{Jo48}
Fritz John.
\newblock Extremum problems with inequalities as subsidiary conditions.
\newblock In {\em Studies and {E}ssays {P}resented to {R}. {C}ourant on his
  60th {B}irthday, {J}anuary 8, 1948}, pages 187--204. Interscience Publishers,
  Inc., New York, N. Y., 1948.

\bibitem[Lew79]{L79}
D.~R. Lewis.
\newblock Ellipsoids defined by {B}anach ideal norms.
\newblock {\em Mathematika}, 26(1):18--29, 1979.

\bibitem[MP00]{milman2000entropy}
Vitali~D. Milman and Alain Pajor.
\newblock Entropy and asymptotic geometry of non-symmetric convex bodies.
\newblock {\em Advances in Mathematics}, 152(2):314--335, 2000.

\bibitem[Nas16]{Nas16}
M\'{a}rton Nasz\'{o}di.
\newblock Proof of a conjecture of {B}\'{a}r\'{a}ny, {K}atchalski and {P}ach.
\newblock {\em Discrete Comput. Geom.}, 55(1):243--248, 2016.

\bibitem[Sch14]{schneider2014convex}
Rolf Schneider.
\newblock {\em Convex bodies: the Brunn--Minkowski theory}.
\newblock Number 151. Cambridge university press, 2014.

\bibitem[TJ89]{TJ89}
Nicole Tomczak-Jaegermann.
\newblock {\em Banach-{M}azur distances and finite-dimensional operator
  ideals}, volume~38 of {\em Pitman Monographs and Surveys in Pure and Applied
  Mathematics}.
\newblock Longman Scientific \& Technical, Harlow; copublished in the United
  States with John Wiley \& Sons, Inc., New York, 1989.

\end{thebibliography}
\end{document}